\newcounter{assu}
\theoremstyle{plain}
\newtheorem{lemma}{Lemma}[section]
\newtheorem{theorem}[lemma]{Theorem}
\newtheorem{proposition}[lemma]{Proposition}
\newtheorem{corollary}[lemma]{Corollary}
\theoremstyle{definition}
\newtheorem{assumption}[assu]{Assumption}
\newtheorem{definition}[lemma]{Definition}
\newtheorem{remark}[lemma]{Remark}
\numberwithin{equation}{section}
\newcommand{\norm}{\| \cdot \|}
\newcommand{\R}{\mathbb{R}}
\newcommand{\gr}{\textrm{graph}}
\newcommand{\haus}{\mathcal{H}}
\newcommand{\ve}{\varepsilon}
\newcommand{\erre}{\mathbb{R}}
\newcommand{\enne}{\mathbb{N}}
\newcommand{\f}{\varphi}
\newcommand{\weak}{\rightharpoonup}
\begin{document}

\title{The Monge problem in Wiener space}

\author{Fabio Cavalletti}

\address{SISSA, via Bonomea 265, IT-34136 Trieste (ITALY)}
\date{}

\bibliographystyle{plain}

\begin{abstract}
We address the Monge problem in the abstract Wiener space and we give an existence result
provided both marginal measures are absolutely continuous with respect to the infinite dimensional Gaussian measure $\gamma$. 
\end{abstract}

\date{\today}

\maketitle

\tableofcontents

\section{Introduction}
\label{S:intro}

Let $(X,\| \cdot \|)$ be a separable Banach space, $\gamma \in \mathcal{P}(X)$ be an infinite dimensional Gaussian measure and
$H(\gamma)$ be the corresponding Cameron-Martin space with Hilbertian norm $\|\cdot \|_{H(\gamma)}$. Consider two 
probability measures $\mu, \nu \in \mathcal{P}(X)$.
We will prove the existence of a solution for the following Monge minimization problem 
\begin{equation}\label{E:trasp}
\min_{T: T_{\sharp}\mu = \nu} \int_{X} \| x- T(x) \|_{H(\gamma)} \mu (dx),
\end{equation}
provided $\mu$\ and $\nu$ are both absolutely continuous w.r.t. $\gamma$.

Before giving an overview of the paper, we recall the main results on the Monge problem. 

In the original formulation given by Monge in 1781 the problem was settled in $\erre^{d}$, 
with the cost given by the Euclidean norm and the measures $\mu, \nu$ supposed to be absolutely 
continuous and supported on two disjoint compact sets.
The original problem remained unsolved for a long time. 
In 1978 Sudakov \cite{sudak} claimed to have a solution for any distance cost function induced 
by a norm: an essential ingredient in the proof was that if $\mu \ll \mathcal{L}^{d}$ and $\mathcal{L}^{d}$-a.e. $\erre^{d}$ can be decomposed into 
convex sets of dimension $k$, then
then the conditional probabilities are absolutely continuous with respect to the $\haus^{k}$ measure of the correct dimension. 
But it turns out that when $d>2$, $0<k<d-1$ the property claimed by Sudakov is not true. An example with $d=3$, $k=1$ can be found in \cite{larm}.

The Euclidean case has been correctly solved only during the last decade. L. C. Evans and W. Gangbo in \cite{evagangbo} 
solve the problem under the assumptions that 
$\textrm{spt}\,\mu \cap  \textrm{spt}\,\nu = \emptyset$,  $\mu,\nu \ll \mathcal{L}^{d}$ and their densities are Lipschitz functions with compact support.
The first existence results for general absolutely continuous measures $\mu,\nu$ with compact support is independently obtained by 
L. Caffarelli, M. Feldman and R.J. McCann in \cite{caffafeldmc} and by N. Trudinger and X.J. Wang in \cite{trudiwang}. 
M. Feldman and R.J. McCann \cite{feldcann:mani} extend the results to manifolds with geodesic cost. 
The case of a general norm as cost function on $\erre^{d}$, including also the case with non strictly convex unitary ball, 
is solved first in the particular case of crystalline norm by L. Ambrosio, B. Kirchheim and A. Pratelli in 
\cite{ambprat:crist}, and then in fully generality independently by L. Caravenna in \cite{caravenna:Monge} and by T. Champion and L. De Pascale in 
\cite{champdepasc:Monge}. The Monge minimization problem for non-branching geodesic metric space is studied in \cite{biacava:streconv},
where the existence is proven for spaces satisfying a finite dimensional lower curvature bound. 

\subsection{Overview of the paper}
\label{Ss:over}

The approach to this problem is the one of \cite{biacava:streconv}: assume that there exists a transference plan of finite cost, then we can
\begin{enumerate}
\item reduce the problem to transportation problems along distinct geodesics;
\item show that the disintegration of the marginal $\mu$ on each geodesic is continuous;
\item find a transport map on each geodesic and piece them together.
\end{enumerate}

Indeed, since the cost function is lower semi-continuous, the existence of transference plan of finite cost implies the existence of an optimal transference plan.
This permits to reduce the minimization problem to one dimensional minimization problems. 
There an explicit map can be constructed provided the first marginal measure $\mu$ is continuous (i.e. without atoms), for example 
choose the monotone minimizer of the quadratic cost $|\cdot|^2$. The third point is an application of selection theorems. 

All this strategy has already implemented in fully generality in \cite{biacava:streconv}. 
Therefore to obtain the existence of an optimal transference map we have to show that point (2) of the strategy is fulfilled for $\mu$ and $\nu$
absolute continuous with respect to the infinite dimensional Gaussian measure $\gamma$.

We recall the main steps of the reduction to geodesics.

The geodesics used by a given transference plan $\pi$ to transport mass can be obtained from a set $\Gamma$ on which $\pi$ is concentrated. 
It is well-known that every optimal transference plan is concentrated on a $\norm_{H}$-cyclically monotone set.
Since the considered norm is non-branching, $\Gamma$ 
yields a natural partition $R$ of a subset of the transport set $\mathcal T_e$, i.e. the set of points on the geodesics used by $\pi$: 
defining
\begin{itemize}
\item the set $\mathcal T$ made of inner points of geodesics,
\item the set $a \cup b := \mathcal T_e \setminus \mathcal T$ of initial points $a$ and end points $b$,
\end{itemize}
the cyclical monotonicity of $\Gamma$ implies that the geodesics used by $\pi$ are a partition on $\mathcal T$. 
In general in $a$
there are points from which more than one geodesic starts and in $b$ there are points in which more than one geodesic ends, therefore 
the membership to a geodesic can't be an equivalence relation on the set $a\cup b$.
Take as example the unit circle with $\mu = \delta_0$ and $\nu = \delta_\pi$. 

We note here that $\pi$ gives also a direction along each component of $R$ and w.l.o.g. we can assume that $\mu(b) = \nu(a)=0$.

Even if we have a natural partition $R$ in $\mathcal T$ and $\mu(a) = 0$, we cannot reduce the transport problem to one dimensional problems: a necessary and sufficient condition is that the disintegration of the measure $\mu$ is strongly consistent, which is equivalent to the fact that there exists a $\mu$-measurable quotient map $f : \mathcal T \to \mathcal T$ of the equivalence relation $R$. If this is the case, then
\[
m := f_\sharp \mu, \quad \mu = \int \mu_y m(dy), \quad \mu_y(f^{-1}(y)) = 1,
\]
i.e. the conditional probabilities $\mu_y$ are concentrated on the counterimages $f^{-1}(y)$ (which are single geodesics). 
In our setting the strong consistency of the disintegration of $\mu$
is a consequence of the topological properties of the geodesics of $\|\cdot\|_{H(\gamma)}$ considered as curves in $(X,\|\cdot \|)$.
Finally we obtain the one dimensional problems by 
partitioning $\pi$ w.r.t. the partition $R \times (X \times X)$,
\[
\pi = \int \pi_y m(dy), \quad \nu = \int \nu_y m(dy) \quad \nu_y := (P_2)_\sharp \pi_y,
\]
and considering the one dimensional problems along the geodesic $R(y)$ with marginals $\mu_y$, $\nu_y$ and cost the arc length on the geodesic.

At this point we can study the problem of the regularity of the conditional probabilities $\mu_y$.
A natural operation on sets can be considered: the evolution along the transport set. 
If $A$ is a subset of $\mathcal {T}_{e}$, we denote by $T_t(A)$ the set $T_{t}(\Gamma \cap A \times X)$ where $T_{t}$ is the map from $X \times X$ to $X$ 
that associates to a couple of points its convex combination at time $t$.

It turns out that the fact that $\mu(a) = 0$ and the measures $\mu_y$ are continuous depends on the behave of the function $t \mapsto \gamma(T_t(A))$. 

\begin{theorem}[Proposition \ref{P:puntini} and Proposition \ref{P:nonatoms}]
\label{T:-1}
If for every $A$ with $\mu(A)>0$ there exists a sequence $t_{n} \searrow 0$ and a positive constant $C$ such that 
$\gamma(T_{t_{n}}(A)) \geq C \mu(A)$, then $\mu(a) = 0$ and the conditional probabilities $\mu_y$ and $\nu_{y}$ are continuous.
\end{theorem}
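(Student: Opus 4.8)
The plan is to reduce both assertions to one statement: every Borel set $A\subseteq\mathcal T_e$ that is \emph{transversal} to the partition $R$ --- i.e.\ that meets each geodesic $R(y)$ in at most one point --- satisfies $\mu(A)=0$. This suffices for both conclusions. The set $a$ is transversal, since a geodesic has a single initial point, so the reduction yields $\mu(a)=0$. For the atomlessness of $\mu_y$, suppose that on a set of rays of positive $m$-measure the conditional $\mu_y$ charges a point; a Jankov--von Neumann type measurable selection of one such atom on each ray produces a transversal set $A$ with $\mu(A)=\int \mu_y(A\cap R(y))\, m(dy)>0$, so the reduction forces $m$-a.e.\ $\mu_y$ to be continuous.

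To prove the reduction I would argue by contradiction. If $A$ is transversal with $\mu(A)>0$, the hypothesis produces $t_n\searrow0$ and $C>0$ with $\gamma(T_{t_n}(A))\ge C\mu(A)$ for every $n$. The key geometric input is that, because $\|\cdot\|_{H(\gamma)}$ is non-branching and the geodesics used by $\pi$ partition $\mathcal T$, the evolved sets cannot genuinely overlap: if $z\in T_s(A)\cap T_t(A)$ with $s\neq t$ in $(0,1)$, then $z$ is an inner point of two geodesics of $R$, hence of a single one; transversality forces the two points of $A$ on that geodesic to coincide, and comparing the convex-combination parameters then gives $s=t$. Passing if necessary to a sufficiently sparse (geometric) subsequence of the $t_n$ to neutralise the overlaps created by points with a nondegenerate fan of images, one obtains countably many essentially disjoint sets, each of $\gamma$-measure at least $C\mu(A)>0$, inside the probability space $(X,\gamma)$. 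Summing contradicts $\gamma(X)=1$, so $\mu(A)=0$.

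The continuity of $\nu_y$ follows by the symmetric argument run on the time-reversed decomposition: the roles of $a$ and $b$, of $\mu$ and $\nu$, and of $t$ and $1-t$ are interchanged, so that target atoms are selected into a transversal set and the evolution is taken as $t\nearrow1$; the required lower bound is the mirror image of the stated hypothesis. The step I expect to be the main obstacle is precisely the disjointness of the evolved sets in the presence of multivalued transport: a transversal point carrying an atom spreads, under $T_t$, onto an interval whose images at nearby times genuinely overlap, so exact disjointness fails and must be replaced by a quantitative ``thin tube'' estimate showing $\liminf_{t\to0}\gamma(T_t(A))=0$ for transversal $A$. Making this rigorous --- choosing the time scale against the fan geometry, and ensuring the Borel/analytic measurability of $T_t(A)$ so that $\gamma(T_t(A))$ is well defined and the selections are admissible --- is where the real content of Propositions \ref{P:puntini} and \ref{P:nonatoms} lies.
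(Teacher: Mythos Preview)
Your reduction to ``transversal sets have $\mu$-measure zero'' and the measurable selection of atoms both match what the paper does. The gap is in the contradiction step. You want the evolved sets $T_{t_n}(A)$ pairwise disjoint so that their $\gamma$-measures sum past $1$; but, as you yourself flag, this fails whenever $\Gamma$ is multivalued at some $x\in A$: if $(x,w_1),(x,w_2)\in\Gamma$ lie on the same ray then $(1-s)x+sw_1=(1-t)x+tw_2$ whenever $s\|w_1-x\|_{H(\gamma)}=t\|w_2-x\|_{H(\gamma)}$, so overlaps occur at arbitrary ratios $s/t$ and no sparse subsequence removes them. Your fallback --- a thin-tube estimate $\liminf_{t\to0}\gamma(T_t(A))=0$ --- would indeed contradict the hypothesis, but you give no mechanism for proving it, and in infinite dimensions there is no obvious one.

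The paper's argument is both simpler and immune to the fan. It needs only the \emph{single} disjointness $T_{t_n}(\hat A)\cap\hat A=\emptyset$ (immediate from transversality: for $a(\mathcal T_e)$ it is the definition of initial point, for the atom selection it is non-branching on $\mathcal T$), together with the containment $T_{t_n}(\hat A)\subset\hat A^{\varepsilon}:=\{x:\|\hat A-x\|_{H(\gamma)}<\varepsilon\}$, valid once one restricts to a compact $\hat A\subset A$ on which the transport distance is bounded by some $M$ and takes $t_n\le\varepsilon/M$. Then
\[
\gamma(\hat A^{\varepsilon})\ \ge\ \gamma(\hat A)+\gamma\big(T_{t_n}(\hat A)\big)\ \ge\ \gamma(\hat A)+C\,\mu(\hat A),
\]
and since $\hat A$ is closed one has $\gamma(\hat A^{\varepsilon})\to\gamma(\hat A)$ as $\varepsilon\to0$, forcing $\mu(\hat A)=0$. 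No mutual disjointness of different evolved sets, and no thin-tube estimate, is ever invoked. (Your remark on $\nu_y$ is correct: continuity of $\nu_y$ genuinely needs the mirror estimate with $T_{1-t}$ and $\nu$, which the paper records separately in Remark~\ref{R:resume} rather than deducing from the hypothesis stated here.)
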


This result implies that the existence of a minimizer of the Monge problem is equivalent to the regularity properties of $t \mapsto \gamma(T_t(A))$.
Hence the problem is reduced to verify that the Gaussian measure $\gamma$ satisfies the assumptions of Theorem \ref{T:-1}. 

Actually this is the key part of the paper. 
Let $\mu = \rho_{1} \gamma$ and $\nu =\rho_{2} \gamma$ and assume that  $\rho_{1}$ and $\rho_{2}$ are bounded.
Then we find suitable $d$-dimensional measures $\mu_{d}, \nu_{d}$, absolute continuous w.r.t. the $d$-dimensional Gaussian measure $\gamma_{d}$, 
converging to $\mu$ and $\nu$ respectively, such that (Theorem \ref{T:stimapprox})  $\gamma_{d}$ verifies $\gamma_{d}(T_{d,t}(A)) \geq C \mu_{d}(A)$ where the evolution now is induced by the transport problem between $\mu_{d}$ and $\nu_{d}$ and the constant $C$ does not depend on the dimension. 
Passing to the limit as $d \nearrow + \infty$, we prove the same property for $\gamma$. 
Hence the existence result is proved for measures with bounded densities.
To obtain the existence result in fully generality we observe that the transport set $\mathcal{T}_{e}$ is a transport set 
also for suitable transport problems between measures satisfying the uniformity condition stated above (Proposition \ref{P:noiniz} and Proposition \ref{P:noatom}).

The assumption that  both $\mu$ and $\nu$ are a.c. with respect to $\gamma$ is fundamental. 
Indeed take as example a diffuse measure $\mu$ and $\nu =\delta_{x}$, then the constant in the evolution estimate induced by the optimal transference plan 
will depend on the dimension and passing to the limit we loose all the informations on the evolution.

\begin{theorem}[Theorem \ref{T:esiste}]\label{T:2} 
Let $\mu,\nu \in \mathcal{P}(X)$ with $\mu,\nu \ll \gamma$. 
Then there exists a solution for the Monge minimization problem \eqref{E:trasp}
\[
\min_{T: T_{\sharp}\mu = \nu} \int \| x- T(x) \|_{H(\gamma)} \mu (dx).
\]
Moreover we can find $T$ invertible.
\end{theorem}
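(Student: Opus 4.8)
The plan is to combine the general reduction machinery described in the overview (and carried out in full in the reference [biacava:streconv]) with Theorem \ref{T:-1}, so that the entire existence problem collapses onto verifying the evolution estimate for the Gaussian measure $\gamma$. I would begin by noting that the cost $\|x - T(x)\|_{H(\gamma)}$ is induced by a non-branching norm, and that the existence of a transference plan of finite cost guarantees, by lower semicontinuity, the existence of an optimal plan $\pi$ concentrated on a $\|\cdot\|_{H(\gamma)}$-cyclically monotone set $\Gamma$. From $\Gamma$ one extracts the transport set $\mathcal{T}_e$, the partition $R$ into geodesics on the inner set $\mathcal{T}$, and the initial/end point sets $a,b$; one may normalize so that $\mu(b) = \nu(a) = 0$. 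The crux is then to apply Theorem \ref{T:-1}: if the Gaussian evolution estimate $\gamma(T_{t_n}(A)) \geq C\,\mu(A)$ holds along some sequence $t_n \searrow 0$ for every $A$ with $\mu(A) > 0$, we conclude both that $\mu(a) = 0$ and that the conditional probabilities $\mu_y$ and $\nu_y$ are continuous (atomless).

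Next I would establish the strong consistency of the disintegration of $\mu$ with respect to the partition $R$. As indicated in the overview, this is equivalent to the existence of a $\mu$-measurable quotient map, and in this setting it follows from the topological properties of the $\|\cdot\|_{H(\gamma)}$-geodesics viewed as curves in $(X,\|\cdot\|)$; concretely the geodesics are continuous curves so the graph of $R$ is analytic and the disintegration theorem applies. This yields
\[
m := f_\sharp \mu, \quad \mu = \int \mu_y\, m(dy), \quad \pi = \int \pi_y\, m(dy), \quad \nu = \int \nu_y\, m(dy),
\]
with $\mu_y, \nu_y$ supported on the single geodesic $R(y)$. Having reduced to one-dimensional problems on each geodesic with marginals $\mu_y, \nu_y$ and cost the arc length, and knowing $\mu_y$ is continuous, I construct on each $R(y)$ the explicit monotone optimal map (the monotone rearrangement, e.g. the minimizer of the quadratic cost), which exists precisely because $\mu_y$ has no atoms. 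Measurable selection theorems then glue these one-dimensional maps into a single $\mu$-measurable map $T$ with $T_\sharp \mu = \nu$, whose total cost equals $\int \|x - T(x)\|_{H(\gamma)}\,\mu(dx)$ and which is optimal because optimality on each fiber lifts to global optimality.

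The main obstacle, and the genuinely new content, is verifying the hypothesis of Theorem \ref{T:-1} for the infinite dimensional Gaussian $\gamma$, since all the reduction steps above are routine transcriptions of [biacava:streconv]. Here I would follow the finite dimensional approximation scheme sketched in the overview: first treat $\mu = \rho_1 \gamma$, $\nu = \rho_2 \gamma$ with $\rho_1, \rho_2$ bounded by selecting $d$-dimensional marginals $\mu_d, \nu_d \ll \gamma_d$ converging to $\mu, \nu$, proving the evolution estimate $\gamma_d(T_{d,t}(A)) \geq C\,\mu_d(A)$ with $C$ independent of $d$ (Theorem \ref{T:stimapprox}), and passing to the limit $d \nearrow \infty$. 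The delicate point is the dimension-independence of $C$, which is exactly where the absolute continuity of \emph{both} marginals is essential — as the remark in the excerpt warns, a target like $\nu = \delta_x$ destroys this uniformity. Finally, to pass from bounded densities to the general case I would invoke Proposition \ref{P:noiniz} and Proposition \ref{P:noatom}, which show that the same transport set $\mathcal{T}_e$ serves auxiliary transport problems between measures satisfying the uniformity condition, thereby transferring the conclusions $\mu(a) = 0$ and continuity of $\mu_y$ to arbitrary $\mu, \nu \ll \gamma$. Invertibility of $T$ follows from the fact that the one-dimensional monotone maps are invertible on atomless marginals and that the analogous regularity holds for $\nu$ (namely $\nu(b) = 0$ and $\nu_y$ continuous), so the construction can be run symmetrically to produce the inverse.
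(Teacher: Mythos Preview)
Your proposal is correct and follows essentially the same route as the paper: reduce via the non-branching geodesic machinery of \cite{biacava:streconv} to one-dimensional problems, verify the evolution estimate (Assumption \ref{A:NDEatom}) first in finite dimensions with a dimension-free constant (Theorem \ref{T:stimapprox}), pass to the limit (Theorem \ref{T:approx}), and then remove the bounded-density hypothesis via Propositions \ref{P:noiniz} and \ref{P:noatom}. The paper's actual proof of Theorem \ref{T:esiste} is just the final gluing step you describe, with all the substance residing in the preceding results exactly as you have identified.
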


Conditions that ensure the existence of a transference plan of finite transference cost can be found in \cite{feyustu:MKWiener}.

\subsection{Structure of the paper}
\label{Ss:struct}

The paper is organized as follows.

In Section \ref{S:preli}, we recall the basic mathematical results we use. In Section \ref{Ss:univmeas} the fundamentals of projective set theory are listed. In Section \ref{S:disintegrazione} we recall the Disintegration Theorem, using the version of \cite{biacar:cmono}. Next, the basic results of selection principles are in Section \ref{Ss:sele}, and some fundamental results in optimal transportation theory  are in Section \ref{Ss:General Facts}.  

In Section \ref{s:wiener} we recall 
the definition of the abstract Wiener space and of the infinite dimensional Gaussian measure.

Section \ref{S:Optimal} shows, omitting the proof, the construction done in \cite{biacava:streconv} on the Monge problem in a generalized non-branching 
geodesic space $(X,d,d_{L})$ where $d_{L}$ is the distance cost.
Using only the $d_{L}$-cyclical monotonicity of a set $\Gamma$ we can obtain a partial order relation $G \subset X \times X$ as follows: $xGy$ iff there exists $(w,z) \in \Gamma$ and a geodesic $\gamma: [0,1] \to X$, with 
$\gamma(0)=w$, $\gamma(1)=z$, such that $x$, $y$ belongs to $\gamma$ and $\gamma^{-1}(x) \leq \gamma^{-1}(y)$. 
This set $G$ is analytic, and allows to define
\begin{itemize}
\item the transport ray set $R$ \eqref{E:Rray},
\item the transport sets $\mathcal T_e$, $\mathcal T$ (with and without and points) \eqref{E:TR0},
\item the set of initial points $a$ and final points $b$ \eqref{E:endpoint0}.
\end{itemize}
Moreover we show that $R \llcorner_{\mathcal T \times \mathcal T}$ is an equivalence relation, we can assume that the set of final points $b$ can be taken $\mu$-negligible. Since all these results are proved in \cite{biacava:streconv}, here we present just a schematic summary of the construction.
In Section \ref{Ss:Wiener} we show that the Wiener space fits into the general setting and that more regularity can be obtained.
Next, Section \ref{Ss:partition} recalls that the disintegration induced by $R$ on $\mathcal T$ is strongly consistent. 
Using this fact, we can define an order preserving map $g$ which maps our transport problem into a transport problem on $\mathcal S \times \R$, where $\mathcal S$ is a cross section of $R$ (Proposition \ref{P:gammaclass}). Finally we show that under this assumption there exists a transference plan with the same cost of $\pi$ which leaves the common mass $\mu \wedge \nu$ at the same place (note that in general this operation lowers the transference cost).

In Section \ref{S:disin} we prove Theorem \ref{T:-1}. 
Let $T_{t}(x,y) = x(1-t) + yt$, then we define $T_{t}(A)$ as the set $T_{t}(\Gamma \cap A \times X)$ where $\Gamma$ is the $\norm_{H(\gamma)}$-cyclically monotone 
set where the transference plan $\pi$ is concentrated.
We show that under the condition
\begin{equation}\label{E:evol}
\mu(A) > 0 \quad \Longrightarrow \quad \gamma(T_{t}(A)) \geq C \gamma(A \cap \{\rho_{1}>0 \})
\end{equation}
the set of initial points $a$ is $\mu$-negligible (Proposition \ref{P:puntini}). Then, under the same assumption, we prove that the conditional probabilities $\mu_y$ 
and $\nu_{y}$ are continuous (Proposition \ref{P:nonatoms}).

In Section \ref{S:approx} we prove that, choosing $\mu_{d} : = P_{d\,\sharp} \mu$ and $\nu_{d}:= P_{d\,\sharp}\nu$, if 
condition \eqref{E:evol} is verified by $\gamma_{d}$ w.r.t. the evolution induced by the finite dimensional Monge problem between $\mu_{d}$ and $\nu_{d}$,
then condition \eqref{E:evol} passes to the limit provided the constant $C$ is independent on the dimension (Theorem \ref{T:approx}).

Section \ref{S:finite} proves condition \eqref{E:evol} in the finite dimensional case. It follows directly from the proof that
the constant $C$ of condition \eqref{E:evol} depends only on the lower and upper bound of the densities of the marginal measures and is independent on the 
dimension.

In Section \ref{S:solu} we obtain the existence of an optimal transport map. 
Proposition \ref{P:noiniz} proves that the set of initial points is $\mu$-negligible and and the set of final points is $\nu$-negligible.
Proposition \ref{P:noatom} proves that the conditional probabilities $\mu_{y}$ and $\nu_{y}$ are continuous.
Finally Theorem \ref{T:esiste} states the existence results.

We end with a list of notations, Section \ref{S:notation}.

\section{Preliminaries}
\label{S:preli}

In this section we recall some general facts about projective classes, the Disintegration Theorem for measures, measurable selection principles, geodesic spaces and optimal transportation problems.

\subsection{Borel, projective and universally measurable sets}
\label{Ss:univmeas}

The \emph{projective class $\Sigma^1_1(X)$} is the family of subsets $A$ of the Polish space $X$ for which there exists $Y$ Polish and $B \in \mathcal{B}(X \times Y)$ such that $A = P_1(B)$. The \emph{coprojective class $\Pi^1_1(X)$} is the complement in $X$ of the class $\Sigma^1_1(X)$. The class $\Sigma^1_1$ is called \emph{the class of analytic sets}, and $\Pi^1_1$ are the \emph{coanalytic sets}.

The \emph{projective class $\Sigma^1_{n+1}(X)$} is the family of subsets $A$ of the Polish space $X$ for which there exists $Y$ Polish and $B \in \Pi^1_n(X \times Y)$ such that $A = P_1(B)$. The \emph{coprojective class $\Pi^1_{n+1}(X)$} is the complement in $X$ of the class $\Sigma^1_{n+1}$.

If $\Sigma^1_n$, $\Pi^1_n$ are the projective, coprojective pointclasses, then the following holds (Chapter 4 of \cite{Sri:courseborel}):
\begin{enumerate}
\item $\Sigma^1_n$, $\Pi^1_n$ are closed under countable unions, intersections (in particular they are monotone classes);
\item $\Sigma^1_n$ is closed w.r.t. projections, $\Pi^1_n$ is closed w.r.t. coprojections;
\item if $A \in \Sigma^1_n$, then $X \setminus A \in \Pi^1_n$;
\item the \emph{ambiguous class} $\Delta^1_n = \Sigma^1_n \cap \Pi^1_n$ is a $\sigma$-algebra
and $\Sigma^1_n \cup \Pi^1_n \subset \Delta^1_{n+1}$.
\end{enumerate}
We will denote by $\mathcal{A}$ the $\sigma$-algebra generated by $\Sigma^1_1$: clearly $\mathcal{B} = \Delta^1_1 \subset \mathcal{A} \subset \Delta^1_2$.

We recall that a subset of $X$ Polish is \emph{universally measurable} if it belongs to all completed $\sigma$-algebras of all Borel measures on $X$:
it can be proved that every set in $\mathcal{A}$ is universally measurable.
We say that $f:X \to \erre \cup \{\pm \infty\}$ is a \emph{Souslin function} if $f^{-1}(t,+\infty] \in \Sigma^{1}_{1}$.

For the proof of the following lemma see \cite{biacava:streconv}.
\begin{lemma}
\label{L:measuregP}
If $f : X \to Y$ is universally measurable, then $f^{-1}(U)$ is universally measurable if $U$ is.
\end{lemma}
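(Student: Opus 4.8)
The plan is to unfold the definition of universal measurability and reduce the statement to a standard completion argument carried out one Borel measure at a time. Recall that $f : X \to Y$ universally measurable means that $f^{-1}(B)$ is universally measurable for every $B \in \mathcal{B}(Y)$, and that $U \subseteq Y$ universally measurable means that $U$ belongs to the completed $\sigma$-algebra of every Borel measure on $Y$. To prove $f^{-1}(U)$ universally measurable it suffices to fix an arbitrary Borel probability measure $\mu$ on $X$ and show that $f^{-1}(U)$ lies in the $\mu$-completion $\mathcal{B}(X)_\mu$ of the Borel $\sigma$-algebra; since $\mu$ is then arbitrary, this yields the claim.

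First I would push $\mu$ forward through $f$. Define $\nu(B) := \mu(f^{-1}(B))$ for $B \in \mathcal{B}(Y)$. This is well posed precisely because $f$ is universally measurable: each $f^{-1}(B)$ is $\mu$-measurable, so $\mu(f^{-1}(B))$ makes sense, and countable additivity of $\nu$ follows from that of $\mu$ together with the fact that $f^{-1}$ commutes with disjoint countable unions. Hence $\nu = f_\sharp \mu$ is a genuine finite Borel measure on the Polish space $Y$.

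Next I would exploit the universal measurability of $U$ against this particular measure $\nu$. Since $U$ belongs to the $\nu$-completion of $\mathcal{B}(Y)$, there exist Borel sets $B_1, B_2 \in \mathcal{B}(Y)$ with $B_1 \subseteq U \subseteq B_2$ and $\nu(B_2 \setminus B_1) = 0$. Taking preimages and using that $f^{-1}$ preserves inclusions and set differences, I obtain
\[
f^{-1}(B_1) \subseteq f^{-1}(U) \subseteq f^{-1}(B_2), \qquad f^{-1}(B_2) \setminus f^{-1}(B_1) = f^{-1}(B_2 \setminus B_1).
\]
Here $f^{-1}(B_1)$ and $f^{-1}(B_2)$ are $\mu$-measurable because $f$ is universally measurable and $B_1, B_2$ are Borel, while
\[
\mu\bigl( f^{-1}(B_2) \setminus f^{-1}(B_1) \bigr) = \mu\bigl( f^{-1}(B_2 \setminus B_1) \bigr) = \nu(B_2 \setminus B_1) = 0.
\]
Thus $f^{-1}(U)$ is trapped between two $\mu$-measurable sets whose difference is $\mu$-null, so by completeness of $\mathcal{B}(X)_\mu$ it is itself $\mu$-measurable.

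Since the Borel measure $\mu$ was arbitrary, $f^{-1}(U)$ lies in every completed Borel $\sigma$-algebra on $X$, i.e. it is universally measurable. I do not expect a genuine obstacle here: the only point requiring care is verifying that $\nu = f_\sharp \mu$ is a bona fide Borel measure, which is exactly where the hypothesis that $f$ (and not merely $U$) is universally measurable enters; the remainder is the routine inner/outer Borel approximation afforded by the definition of the completion.
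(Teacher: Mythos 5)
Your proof is correct and matches the intended argument: the paper itself gives no proof of this lemma, deferring to \cite{biacava:streconv}, and the push-forward-plus-completion reasoning you give (fix a Borel measure $\mu$ on $X$, form $\nu = f_\sharp \mu$ using universal measurability of $f$, sandwich $U$ between Borel sets $B_1 \subseteq U \subseteq B_2$ with $\nu(B_2 \setminus B_1) = 0$, and pull back) is exactly the standard proof appearing there. The only point worth flagging is notational: in $\nu(B) := \mu(f^{-1}(B))$ the measure $\mu$ must be understood as extended to its completion, since $f^{-1}(B)$ is only guaranteed to be $\mu$-measurable rather than Borel --- a subtlety your write-up already implicitly handles.
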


\subsection{Disintegration of measures}
\label{S:disintegrazione}

Given a measurable space $(R, \mathscr{R})$ and a function $r: R \to S$, with $S$ generic set, we can endow $S$ with the \emph{push forward $\sigma$-algebra} $\mathscr{S}$ of $\mathscr{R}$:
\[
Q \in \mathscr{S} \quad \Longleftrightarrow \quad r^{-1}(Q) \in \mathscr{R},
\]
which could be also defined as the biggest $\sigma$-algebra on $S$ such that $r$ is measurable. Moreover given a measure space 
$(R,\mathscr{R},\rho)$, the \emph{push forward measure} $\eta$ is then defined as $\eta := (r_{\sharp}\rho)$.

Consider a probability space $(R, \mathscr{R},\rho)$ and its push forward measure space $(S,\mathscr{S},\eta)$ induced by a map $r$. From the above definition the map $r$ is clearly measurable and inverse measure preserving.

\begin{definition}
\label{defi:dis}
A \emph{disintegration} of $\rho$ \emph{consistent with} $r$ is a map $\rho: \mathscr{R} \times S \to [0,1]$ such that
\begin{enumerate}
\item  $\rho_{s}(\cdot)$ is a probability measure on $(R,\mathscr{R})$ for all $s\in S$,
\item  $\rho_{\cdot}(B)$ is $\eta$-measurable for all $B \in \mathscr{R}$,
\end{enumerate}
and satisfies for all $B \in \mathscr{R}, C \in \mathscr{S}$ the consistency condition
\[
\rho\left(B \cap r^{-1}(C) \right) = \int_{C} \rho_{s}(B) \eta(ds).
\]
A disintegration is \emph{strongly consistent with respect to $r$} if for all $s$ we have $\rho_{s}(r^{-1}(s))=1$.
\end{definition}

The measures $\rho_s$ are called \emph{conditional probabilities}.

We say that a $\sigma$-algebra $\mathcal{H}$ is \emph{essentially countably generated} with respect to a measure $m$ if there exists a countably generated $\sigma$-algebra $\hat{\mathcal{H}}$ such that for all $A \in \mathcal{H}$ there exists $\hat{A} \in \hat{\mathcal{H}}$ such that $m (A \vartriangle \hat{A})=0$.

We recall the following version of the disintegration theorem that can be found on \cite{Fre:measuretheory4}, Section 452 (see \cite{biacar:cmono} for a direct proof).

\begin{theorem}[Disintegration of measures]
\label{T:disintr}
Assume that $(R,\mathscr{R},\rho)$ is a countably generated probability space, $R = \cup_{s \in S}R_{s}$ a partition of R, $r: R \to S$ the quotient map and $\left( S, \mathscr{S},\eta \right)$ the quotient measure space. Then  $\mathscr{S}$ is essentially countably generated w.r.t. $\eta$ and there exists a unique disintegration $s \mapsto \rho_{s}$ in the following sense: if $\rho_{1}, \rho_{2}$ are two consistent disintegration then $\rho_{1,s}(\cdot)=\rho_{2,s}(\cdot)$ for $\eta$-a.e. $s$.

If $\left\{ S_{n}\right\}_{n\in \enne}$ is a family essentially generating  $\mathscr{S}$ define the equivalence relation:
\[
s \sim s' \iff \   \{  s \in S_{n} \iff s'\in S_{n}, \ \forall\, n \in \enne\}.
\]
Denoting with p the quotient map associated to the above equivalence relation and with $(L,\mathscr{L}, \lambda)$ the quotient measure space, the following properties hold:
\begin{itemize}
\item $R_{l}:= \cup_{s\in p^{-1}(l)}R_{s} = (p \circ r)^{-1}(l)$ is $\rho$-measurable and $R = \cup_{l\in L}R_{l}$;
\item the disintegration $\rho = \int_{L}\rho_{l} \lambda(dl)$ satisfies $\rho_{l}(R_{l})=1$, for $\lambda$-a.e. $l$. In particular there exists a 
strongly consistent disintegration w.r.t. $p \circ r$;
\item the disintegration $\rho = \int_{S}\rho_{s} \eta(ds)$ satisfies $\rho_{s}= \rho_{p(s)}$ for $\eta$-a.e. $s$.
\end{itemize}
\end{theorem}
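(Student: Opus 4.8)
The plan is to follow the classical route to disintegration: first establish essential countable generation of the quotient $\sigma$-algebra, then produce the conditional ``numbers'' $\rho_s(B)$ by Radon--Nikodym (which simultaneously yields uniqueness), and finally assemble them into genuine probability measures by reducing the quotient map to a Borel map into a standard Borel space, where regular conditional probabilities are available.

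First I would prove that $\mathscr{S}$ is essentially countably generated with respect to $\eta$. Since $(R,\mathscr{R},\rho)$ is countably generated, the space $L^{2}(R,\mathscr{R},\rho)$ is separable. The $r$-saturated sets form a sub-$\sigma$-algebra $r^{-1}(\mathscr{S}) \subset \mathscr{R}$, and $L^{2}(R,r^{-1}(\mathscr{S}),\rho)$ is a closed subspace of a separable Hilbert space, hence separable; this forces $r^{-1}(\mathscr{S})$ to be essentially countably generated mod $\rho$. Since $r$ is surjective and inverse measure preserving, $r^{-1}$ identifies $\mathscr{S}$ with this saturated sub-$\sigma$-algebra and preserves symmetric differences up to $\rho$-null sets, so pushing a countable generating family forward through $r$ yields a countable family $\{S_{n}\}_{n\in\enne}$ essentially generating $\mathscr{S}$ mod $\eta$.

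For the conditional numbers and for uniqueness, I would fix $B \in \mathscr{R}$ and consider the finite measure $C \mapsto \rho(B \cap r^{-1}(C))$ on $(S,\mathscr{S})$; it is absolutely continuous with respect to $\eta$, so its Radon--Nikodym derivative $s \mapsto \rho_{s}(B)$ is the $\eta$-a.e.\ unique $\mathscr{S}$-measurable function satisfying the consistency condition. Uniqueness is then immediate: two consistent disintegrations give, for each $B$, two versions of the same conditional density, hence agree $\eta$-a.e.; running $B$ through a countable generating family of $\mathscr{R}$ and invoking a monotone-class argument upgrades this to $\rho_{1,s}(\cdot)=\rho_{2,s}(\cdot)$ for $\eta$-a.e.\ $s$.

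The delicate point, and the main obstacle, is to package these densities into bona fide probability measures concentrated on the fibers, and here the refinement enters. Using $\{S_{n}\}$ I would define $\iota : S \to \{0,1\}^{\enne}$ by $\iota(s) = (\ind_{S_{n}}(s))_{n\in\enne}$, which is constant exactly on the classes of $\sim$, hence factors through $p$ and realizes $L$ as a subset of the standard Borel space $\{0,1\}^{\enne}$ carrying the trace $\sigma$-algebra $\mathscr{L}$. Since $(R,\mathscr{R},\rho)$ is countably generated, it is isomorphic mod $\rho$-null sets to a Borel subset of $[0,1]$, so $p\circ r$ becomes a Borel map between standard Borel spaces, and the classical existence theorem for regular conditional probabilities provides a strongly consistent disintegration $\rho = \int_{L}\rho_{l}\,\lambda(dl)$ with $\rho_{l}(R_{l})=1$ for $\lambda$-a.e.\ $l$, where $\lambda = p_{\sharp}\eta$ and $R_{l} = (p\circ r)^{-1}(l)$ is $\rho$-measurable. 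Finally I would recover the $r$-disintegration by setting $\rho_{s} := \rho_{p(s)}$ and verifying $r$-consistency: because $\{S_{n}\}$ generates $\mathscr{S}$ mod $\eta$, integration over an arbitrary $C \in \mathscr{S}$ reduces to integration over a $p$-saturated set, so $p\circ r$-consistency yields $r$-consistency, and by the uniqueness above every $r$-consistent disintegration equals this one $\eta$-a.e., which is exactly $\rho_{s}=\rho_{p(s)}$. I would stress that this $r$-disintegration is generally only consistent, not strongly so, since the fibers $R_{s}$ are strictly finer than the $\rho$-measurable blocks $R_{l}$; regaining strong consistency is precisely what the passage to the countably separated quotient $L$ buys, which is why essential countable generation is the crux of the argument.
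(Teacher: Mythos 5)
The paper does not actually prove Theorem \ref{T:disintr}: it is recalled verbatim from the literature, with the proof delegated to Fremlin (Section 452) and to the direct proof in \cite{biacar:cmono}. Your proposal is correct and follows essentially that standard direct proof: separability of $L^{2}(R,\mathscr{R},\rho)$ forces the saturated sub-$\sigma$-algebra $r^{-1}(\mathscr{S})$, hence $\mathscr{S}$, to be essentially countably generated; Radon--Nikodym derivatives of $C \mapsto \rho(B \cap r^{-1}(C))$ give the conditional densities and, via a countable generating $\pi$-system and a monotone-class argument, the $\eta$-a.e.\ uniqueness; and the coding $\iota(s) = (\ind_{S_{n}}(s))_{n}$ into $\{0,1\}^{\enne}$ transfers the problem to a standard Borel model where regular conditional probabilities concentrated on the atoms of a countably generated sub-$\sigma$-algebra yield the strongly consistent disintegration over $L$, from which $\rho_{s} = \rho_{p(s)}$ follows by uniqueness exactly as you argue. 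Two small points are glossed: the phrase ``isomorphic mod $\rho$-null sets to a Borel subset of $[0,1]$'' is slightly stronger than what is available (if $\mathscr{R}$ does not separate points there is no point isomorphism; what you actually use, and what suffices, is the measurable map $\varphi(x) = (\ind_{B_{n}}(x))_{n}$ with $\varphi^{-1}(\mathcal{B}(\{0,1\}^{\enne})) = \mathscr{R}$ and the push-forward measure), and transporting the conditional probabilities back from the Borel model to $(R,\mathscr{R})$ requires choosing representatives $B = \varphi^{-1}(\hat{B})$ coherently on a countable algebra and extending, since $\hat{B}$ is only determined up to sets $\hat\rho$-negligible off the range of $\varphi$ --- a routine verification, but one the cited direct proof carries out and your sketch should at least acknowledge.
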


In particular we will use the following corollary.

\begin{corollary}
\label{C:disintegration}
If $(S,\mathscr{S})=(X,\mathcal{B}(X))$ with $X$ Polish space, then the disintegration is strongly consistent.
\end{corollary}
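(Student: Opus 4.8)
The plan is to deduce the corollary directly from Theorem~\ref{T:disintr} by showing that, in the Polish setting, the auxiliary quotient $p$ introduced there reduces to the identity; consequently the strong consistency with respect to $p \circ r$ provided by the theorem becomes strong consistency with respect to $r$ itself.

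First I would exploit that a Polish space $X$ is second countable: fix a countable base $\{U_{n}\}_{n \in \enne}$ of its topology. The $\sigma$-algebra generated by a countable base of a second countable space is exactly $\mathcal{B}(X)$, so $\mathscr{S} = \mathcal{B}(X)$ is genuinely countably generated, hence in particular essentially countably generated with respect to $\eta$. I then take $\{S_{n}\} = \{U_{n}\}$ as an essentially generating family in the sense of Theorem~\ref{T:disintr} and examine the equivalence relation
\[
s \sim s' \iff \{ s \in S_{n} \iff s' \in S_{n}, \ \forall\, n \in \enne \}
\]
associated to it. Since $X$ is metrizable it is Hausdorff, and a countable base separates points: if $s \neq s'$, choosing $r = \|s - s'\|/2$ (or any metric) there is a basic open set $U_{n}$ with $s \in U_{n} \subseteq B(s,r)$, so that $s' \notin U_{n}$. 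Therefore $s \sim s'$ forces $s = s'$, every equivalence class is a singleton, and the quotient map $p \colon S \to L$ is injective.

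Finally, identifying $L$ with $S$ through the bijection induced by $p$, we obtain $R_{l} = R_{p^{-1}(l)} = R_{s}$ and $p \circ r = r$. The second bullet of Theorem~\ref{T:disintr} then furnishes a disintegration $\rho = \int_{S} \rho_{s}\, \eta(ds)$ with $\rho_{s}(r^{-1}(s)) = 1$ for $\eta$-a.e. $s$, which is precisely the assertion that the disintegration is strongly consistent with respect to $r$.

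The only step requiring genuine care — and the one I would flag as the heart of the argument — is the verification that the chosen generating family separates points, since this is exactly where the Polish (hence second countable and Hausdorff) structure of $X$ is used to collapse the relation $\sim$ to equality; everything else is a transcription of the general disintegration theorem applied to this special quotient.
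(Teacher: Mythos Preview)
Your argument is correct. The paper states this corollary without proof, as an immediate consequence of Theorem~\ref{T:disintr}; your proposal supplies precisely the expected details---a countable base of the Polish space both generates $\mathcal{B}(X)$ and separates points, so the auxiliary equivalence $\sim$ collapses to equality and the strongly consistent disintegration with respect to $p\circ r$ furnished by the theorem is already one with respect to $r$. (One cosmetic remark: you reuse the letter $r$ for a radius while it already denotes the quotient map; a different symbol would read more cleanly.)
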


\subsection{Selection principles}
\label{Ss:sele}

Given a multivalued function $F: X \to Y$, $X$, $Y$ metric spaces, the \emph{graph} of $F$ is the set
\begin{equation}
\label{E:graphF}
\textrm{graph}(F) := \big\{ (x,y) : y \in F(x) \big\}.
\end{equation}
The \emph{inverse image} of a set $S\subset Y$ is defined as:
\begin{equation}
\label{E:inverseF}
F^{-1}(S) := \big\{ x \in X\ :\ F(x)\cap S \neq \emptyset \big\}.
\end{equation}
For $F \subset X \times Y$, we denote also the sets
\begin{equation}
\label{E:sectionxx}
F_x := F \cap \{x\} \times Y, \quad F^y := F \cap X \times \{y\}.
\end{equation}
In particular, $F(x) = P_2(\gr(F)_x)$, $F^{-1}(y) = P_1(\gr(F)^y)$. We denote by $F^{-1}$ the graph of the inverse function
\begin{equation}
\label{E:F-1def}
F^{-1} := \big\{ (x,y): (y,x) \in F \big\}.
\end{equation}

We say that $F$ is \emph{$\mathcal{R}$-measurable} if $F^{-1}(B) \in \mathcal{R}$ for all $B$ open. We say that $F$ is \emph{strongly Borel measurable} if inverse images of closed sets are Borel. A multivalued function is called \emph{upper-semicontinuous} if the preimage of every closed set is closed: in particular u.s.c. maps are strongly Borel measurable.

In the following we will not distinguish between a multifunction and its graph. Note that the \emph{domain of $F$} (i.e. the set $P_1(F)$) is in general a subset of $X$. The same convention will be used for functions, in the sense that their domain may be a subset of $X$.

Given $F \subset X \times Y$, a \emph{section $u$ of $F$} is a function from $P_1(F)$ to $Y$ such that $\textrm{graph}(u) \subset F$. We recall the following selection principle, Theorem 5.5.2 of \cite{Sri:courseborel}, page 198.

\begin{theorem}
\label{T:vanneuma}
Let $X$ and $Y$ be Polish spaces, $F \subset X \times Y$ analytic, and $\mathcal{A}$ the $\sigma$-algebra generated by the analytic subsets of X. Then there is an $\mathcal{A}$-measurable section $u : P_1(F) \to Y$ of $F$.
\end{theorem}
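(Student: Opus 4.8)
The plan is to establish this classical selection result — the Jankov--von Neumann uniformization theorem — by transporting the selection problem to Baire space $\N^{\N}$ and then choosing, fibrewise and in a canonical way, the leftmost branch. I may assume $F \neq \emptyset$, since otherwise there is nothing to prove. Since $F$ is analytic, I would invoke the standard fact that every nonempty $\Sigma^1_1$ set is a continuous image of Baire space: there is a continuous surjection $g : \N^{\N} \to F \subseteq X \times Y$. Set $h := P_1 \circ g : \N^{\N} \to X$, which is continuous with $h(\N^{\N}) = P_1(F)$. For each $x \in P_1(F)$ the fibre $h^{-1}(x)$ is then a nonempty closed subset of $\N^{\N}$, and the goal is to select from it in an $\mathcal{A}$-measurable way.

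The canonical selector is the leftmost branch. For a finite sequence $\sigma$ write $N_\sigma \subseteq \N^{\N}$ for the associated basic clopen cylinder, and for $n \in \N$ let $\sigma n$ denote $\sigma$ extended by $n$. I would build $s(x) \in \N^{\N}$ recursively: having chosen $\sigma = (s(x)(0),\dots,s(x)(k-1))$, let $s(x)(k)$ be the least $n$ with $N_{\sigma n} \cap h^{-1}(x) \neq \emptyset$. Because $h^{-1}(x)$ is nonempty and closed, the recursion never halts and yields $s(x) \in h^{-1}(x)$. Defining $u(x) := P_2(g(s(x)))$, the relation $s(x) \in h^{-1}(x)$ forces $P_1(g(s(x))) = x$, so $g(s(x)) = (x,u(x)) \in F$; hence $\gr(u) \subseteq F$ and $u$ is a section of $F$ on $P_1(F)$.

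The step I expect to be the main obstacle is verifying that $s$ — and therefore $u$ — is $\mathcal{A}$-measurable, where $\mathcal{A} = \sigma(\Sigma^1_1)$. The key point is that $N_{\sigma n} \cap h^{-1}(x) \neq \emptyset$ is equivalent to $x \in h(N_{\sigma n})$, and since $N_{\sigma n}$ is open and $h$ is continuous, $h(N_{\sigma n})$ is analytic, i.e. in $\Sigma^1_1$. I would then prove $\{x : s(x)(k)=n\} \in \mathcal{A}$ by induction on $k$, decomposing over the possible prefixes:
\[
\{x : s(x)(k)=n\} = \bigcup_{\sigma \in \N^{k}} \Big( \bigcap_{j<k}\{x : s(x)(j)=\sigma(j)\} \Big) \cap \Big( h(N_{\sigma n}) \setminus \bigcup_{m<n} h(N_{\sigma m}) \Big).
\]
By the inductive hypothesis the first bracket lies in $\mathcal{A}$, while the second is a Boolean combination of analytic sets, hence in $\mathcal{A}$ by the closure properties recorded in Section \ref{Ss:univmeas}; a countable union of $\mathcal{A}$-sets stays in $\mathcal{A}$. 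Consequently $s^{-1}(N_\sigma) \in \mathcal{A}$ for every cylinder $N_\sigma$, so $s$ is $\mathcal{A}$-measurable.

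To conclude, the map $P_2 \circ g : \N^{\N} \to Y$ is continuous, so for open $U \subseteq Y$ the set $(P_2 \circ g)^{-1}(U)$ is open in $\N^{\N}$ and
\[
u^{-1}(U) = s^{-1}\big( (P_2 \circ g)^{-1}(U) \big) \in \mathcal{A}
\]
because $s$ is $\mathcal{A}$-measurable. Thus $u : P_1(F) \to Y$ is an $\mathcal{A}$-measurable section of $F$, as claimed. Since every set in $\mathcal{A}$ is universally measurable, this in particular furnishes a universally measurable section, which is the form in which the theorem is used later in the paper.
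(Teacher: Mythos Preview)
Your argument is correct: this is precisely the classical Jankov--von Neumann proof via a continuous parametrization by Baire space and the leftmost-branch selector, and the $\mathcal{A}$-measurability verification by induction on the coordinates is carried out properly. Note, however, that the paper does not prove this statement at all --- it is quoted without proof from \cite{Sri:courseborel}, Theorem~5.5.2 --- so there is no ``paper's own proof'' to compare against; your write-up is essentially the standard argument one would find in that reference.
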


A \emph{cross-section of the equivalence relation $E$} is a set $S \subset E$ such that the intersection of $S$ with each equivalence class is a singleton. We recall that a set $A \subset X$ is saturated for the equivalence relation $E \subset X \times X$ if $A = \cup_{x \in A} E(x)$.

The next result is taken from \cite{Sri:courseborel}, Theorem 5.2.1.

\begin{theorem}
\label{T:KRN}
Let $Y$ be a Polish space, $X$ a nonempty set, and $\mathcal{L}$ a $\sigma$-algebra of subset of $X$. 
Every $\mathcal{L}$-measurable, closed value multifunction $F:X \to Y$ admits an $\mathcal{L}$-measurable section.
\end{theorem}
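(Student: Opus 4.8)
The plan is to build the section as a uniform limit of countably-valued $\mathcal{L}$-measurable maps that approximate $F$ better and better; closedness of the values will then force the limit into $F(x)$. Fix a complete metric $d$ on $Y$ inducing its topology and a countable dense set $\{y_{k}\}_{k\in\n}\subset Y$. The only inputs are the hypothesis that $F^{-1}(U)\in\mathcal{L}$ for every open $U\subseteq Y$ (recall $F^{-1}(U)=\{x: F(x)\cap U\neq\emptyset\}$) together with the fact that $\mathcal{L}$ is a $\sigma$-algebra; implicitly $F(x)\neq\emptyset$ on the domain, which is where a section must live.

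First I would construct inductively maps $f_{n}:X\to Y$, each taking values in $\{y_{k}\}$, with $d(f_{n}(x),F(x))<2^{-n}$ for all $x$ and $d(f_{n}(x),f_{n-1}(x))<2^{-n+2}$ for $n\geq 1$. For $n=0$ put $A_{k}:=F^{-1}(B(y_{k},1))\in\mathcal{L}$; since each $F(x)$ is nonempty, density gives $\bigcup_{k}A_{k}=X$, so disjointifying and setting $f_{0}\equiv y_{k}$ on $A_{k}\setminus\bigcup_{j<k}A_{j}$ yields the base map. For the step, on each level set $\{f_{n-1}=y_{m}\}$ one has $F(x)\cap B(y_{m},2^{-n+1})\neq\emptyset$; intersecting this ball with the balls $B(y_{k},2^{-n})$ and pulling back through $F^{-1}$ produces the $\mathcal{L}$-measurable cover $\{x: F(x)\cap B(y_{m},2^{-n+1})\cap B(y_{k},2^{-n})\neq\emptyset\}$ of that level set (density again), and disjointifying defines $f_{n}$ there so that picking a point $q\in F(x)$ within $2^{-n}$ of the chosen $y_{k}$ gives both required estimates.

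The measurability bookkeeping reduces entirely to the hypothesis: every set entering the construction has the form $F^{-1}(V)$ with $V$ a finite intersection of open balls, hence $V$ open and $F^{-1}(V)\in\mathcal{L}$, while countable unions, intersections and differences remain in $\mathcal{L}$ because it is a $\sigma$-algebra. Passing to the limit, the summable bound $d(f_{n}(x),f_{n-1}(x))<2^{-n+2}$ makes $(f_{n}(x))_{n}$ Cauchy for each $x$, so by completeness it converges to some $f(x)$, and $f$ is $\mathcal{L}$-measurable as a pointwise limit of $\mathcal{L}$-measurable maps into a metric space (for open $U$ one writes $f^{-1}(U)=\bigcup_{m}\bigcup_{N}\bigcap_{n\geq N}f_{n}^{-1}(U_{m})$ with the enlargements $U_{m}=\{y: d(y,Y\setminus U)>1/m\}$). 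Since $d(f(x),F(x))\leq d(f(x),f_{n}(x))+d(f_{n}(x),F(x))\to 0$ and $F(x)$ is closed, $f(x)\in F(x)$ for every $x$, so $f$ is the desired section.

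The main obstacle is the inductive step, where one must keep $f_{n}$ simultaneously uniformly close to $f_{n-1}$ and uniformly $2^{-n}$-close to $F$ while guaranteeing that every selection set lies in $\mathcal{L}$; all three are delivered by the single fact that $F^{-1}$ of an open set is in $\mathcal{L}$. Closedness of the values is used only at the very end, to ensure the uniform limit actually lands in $F(x)$ rather than merely in its closure.
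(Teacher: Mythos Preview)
Your argument is correct: this is precisely the classical Kuratowski--Ryll-Nardzewski construction, and the bookkeeping (the inductive estimates $d(f_n(x),F(x))<2^{-n}$ and $d(f_n(x),f_{n-1}(x))<2^{-n+2}$, the measurability of each $F^{-1}$ of an open set, and the passage to the pointwise limit) is carried out cleanly. Note, however, that the paper does not supply its own proof of this statement; it is quoted as Theorem~5.2.1 of \cite{Sri:courseborel} and used as a black box, so there is nothing in the paper to compare your approach against beyond observing that your write-up is the standard proof one finds in that reference.
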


A standard corollary of the above selection principle is that if the disintegration is strongly consistent in a Polish space, then up to a saturated set of negligible measure there exists a Borel cross-section.

In particular, we will use the following corollary.

\begin{corollary}
\label{C:weelsupprr}
Let $F \subset X \times X$ be $\mathcal{A}$-measurable, $X$ Polish, such that $F_x$ is closed and define the equivalence relation $x \sim y \ \Leftrightarrow \ F(x) = F(y)$. Then there exists a $\mathcal{A}$-section $f : P_1(F) \to X$ such that $(x,f(x)) \in F$ and $f(x) = f(y)$ if $x \sim y$.
\end{corollary}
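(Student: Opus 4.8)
The plan is to realize $f$ as the \emph{canonical} section produced by the Kuratowski--Ryll-Nardzewski selection theorem (Theorem \ref{T:KRN}), and then to exploit the fact that this canonical section depends only on the value set $F(x)$, not on $x$ itself.

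First I would check that the hypotheses match those of Theorem \ref{T:KRN}. Since $F_x$ is closed, each value $F(x) = P_2(F_x)$ is a closed subset of the Polish space $X$, and on the domain $P_1(F) = F^{-1}(X) \in \mathcal{A}$ the assignment $x \mapsto F(x)$ is a nonempty closed-valued multifunction. The assumption that $F$ is $\mathcal{A}$-measurable means precisely that $F^{-1}(U) = \{x : F(x) \cap U \neq \emptyset\} \in \mathcal{A}$ for every open $U$, which is exactly the measurability requirement of Theorem \ref{T:KRN} with $\mathcal{L} = \mathcal{A}$. Hence there is an $\mathcal{A}$-measurable section $f : P_1(F) \to X$ with $f(x) \in F(x)$, i.e. $(x,f(x)) \in F$. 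What remains is to arrange that $f$ be constant on the equivalence classes of $\sim$.

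For this I would invoke the explicit construction behind Theorem \ref{T:KRN} rather than its bare existence statement. Fix a complete metric $d$ compatible with the topology of $X$ (normalized so that $d \leq 1$) and a countable dense sequence $(s_m)_{m \in \N} \subset X$. The section is obtained as the pointwise limit $f = \lim_n f_n$ of $\mathcal{A}$-measurable functions $f_n$ taking values in $\{s_m\}$, where $f_n(x) = s_m$ for the \emph{least} index $m$ such that $F(x) \cap B(s_m, 2^{-n}) \neq \emptyset$ and $d(s_m, f_{n-1}(x)) < 2^{-n+1}$, the initial step using only the condition $F(x) \cap B(s_m, 1) \neq \emptyset$. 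Each level set $\{f_n = s_m\}$ lies in $\mathcal{A}$ because $F^{-1}(B(s_m, 2^{-n})) \in \mathcal{A}$ and, inductively, $f_{n-1}$ is $\mathcal{A}$-measurable; closedness of $F(x)$ guarantees $f_n(x) \to f(x) \in F(x)$.

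The crucial observation, which is the only point beyond a verbatim application of Theorem \ref{T:KRN}, is that the index selected at each stage depends only on the set $F(x)$ and on $f_{n-1}(x)$. Arguing by induction, $f_0(x)$ is determined by $F(x)$ alone, and if $f_{n-1}(x)$ is a function of $F(x)$ then so is $f_n(x)$; hence $f(x) = \lim_n f_n(x)$ depends only on $F(x)$. Consequently, if $x \sim y$, i.e. $F(x) = F(y)$, then $f(x) = f(y)$, and together with $(x,f(x)) \in F$ this yields the claim. I expect the main obstacle to be exactly this last step: an arbitrary $\mathcal{A}$-measurable section need not be constant on the classes of $\sim$, so one must commit to the least-index choices in the construction of the selection in order to force the dependence on $F(x)$ only.
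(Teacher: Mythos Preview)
Your proof is correct, but the paper reaches the same conclusion by a different and somewhat cleaner device. Rather than opening up the Kuratowski--Ryll-Nardzewski construction and tracking the least-index choices, the paper introduces the sub-$\sigma$-algebra $\mathcal{R}$ generated by the sets $F^{-1}(G)$, $G \subset X$ open, notes that $\mathcal{R} \subset \mathcal{A}$, and applies Theorem~\ref{T:KRN} with $\mathcal{L} = \mathcal{R}$ instead of $\mathcal{L} = \mathcal{A}$. The point is that if $F(x) = F(y)$ then $x$ and $y$ lie in the same atom of $\mathcal{R}$, and any $\mathcal{R}$-measurable map into a Polish space is constant on such atoms; hence the selection $f$ is automatically constant on $\sim$-classes.

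The two arguments are really the same idea viewed from opposite ends: your inductive claim ``$f_n(x)$ depends only on $F(x)$'' is exactly the statement that each $f_n$ is $\mathcal{R}$-measurable. The paper's route has the advantage of treating Theorem~\ref{T:KRN} as a black box and isolating the invariance in the choice of $\sigma$-algebra, so no recourse to the internal mechanics of the selection is needed. Your route, on the other hand, makes the invariance completely explicit and would work even if one only had the constructive version of the selection theorem at hand.
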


\begin{proof}
For all open sets $G \subset X$, consider the sets $F^{-1}(G) = P_1(F \cap X \times G) \in \mathcal{A}$, and let $\mathcal{R}$ be the $\sigma$-algebra generated by $F^{-1}(G)$. Clearly $\mathcal{R} \subset \mathcal{A}$.

If $x \sim y$, then
\[
x \in F^{-1}(G) \quad \Longleftrightarrow \quad y \in F^{-1}(G),
\]
so that each equivalence class is contained in an atom of $\mathcal{R}$, and moreover by construction $x \mapsto F(x)$ is $\mathcal{R}$-measurable.

We thus conclude by using Theorem \ref{T:KRN} that there exists an $\mathcal{R}$-measurable section $f$: this measurability condition implies that $f$ is constant on atoms, in particular on equivalence classes.
\end{proof}

\subsection{General facts about optimal transportation}
\label{Ss:General Facts}

Let $(X,\mathcal B,\mu)$ and $(Y,\mathcal B,\nu)$ be two Polish probability spaces and  $c : X \times Y \to \R$ be a Borel measurable function. Consider the set of \emph{transference plans}
\[
\Pi (\mu,\nu) := \Big\{ \pi \in \mathcal{P}(X\times Y) : (P_1)_\sharp \pi = \mu, (P_2)_\sharp \pi = \nu \Big\}.
\]
Define the functional
\begin{equation}
\label{E:Ifunct}
\begin{array}{ccccl} 
\mathcal{I} &:& \Pi(\mu,\nu) &\to& \erre^{+} \cr
&& \pi &\mapsto& \mathcal{I}(\pi):=\int c \pi.
\end{array}
\end{equation}
The \emph{Monge-Kantorovich minimization problem} is to find the minimum of $\mathcal{I}$ over all transference plans.

If we consider a $\mu$-measurable \emph{transport map} $T : X \to Y$ such that $T_{\sharp}\mu=\nu$, the functional \eqref{E:Ifunct} becomes
\[
\mathcal I(T):= \mathcal I \big( (Id \times T)_\sharp \mu \big) = \int c(x,T(x)) \mu(dx).
\]
The minimum problem over all $T$ is called \emph{Monge minimization problem}.

The Kantorovich problem admits a (pre) dual formulation.

\begin{definition}
A map $\varphi : X \to \erre \cup \{-\infty\} $ is said to be \emph{$c$-concave} if it is not identically $-\infty$ and there exists $\psi : Y \to \erre \cup \{-\infty\}$, $\psi \not\equiv -\infty$, such that
\[
\varphi(x) = \inf_{y \in Y} \big\{ c(x,y) - \psi(y) \big\}.
\]
The \emph{$c$-transform} of $\varphi$ is the function
\begin{equation}
\label{E:ctransf}
\varphi^c(y) := \inf_{x\in X}  \left\{ c(x,y) - \varphi (x) \right\}.
\end{equation}
The \emph{$c$-superdifferential $\partial^c \f$} of $\varphi$ is the subset of $X \times Y$ defined by
\begin{equation}
\label{E:csudiff}
\partial^{c}\f := \Big\{ (x,y) : c(x,y) - \f(x) \leq c(z,y) - \f(z) \ \forall z \in X \Big\} \subset X \times Y.
\end{equation}
\end{definition}

\begin{definition}\label{D:cicl}
A set $\Gamma \subset X \times Y$ is said to be \emph{$c$-cyclically monotone} if, for any $n \in \mathbb{N}$ and for any family $(x_0,y_0),\dots,(x_n,y_n)$ of points of $\Gamma$, the following inequality holds:
\[
\sum_{i=0}^nc(x_i,y_i) \leq \sum_{i=0}^nc(x_{i+1},y_i),
\]
where $x_{n+1} = x_0$.

A transference plan is said to be \emph{$c$-cyclically monotone} if it is concentrated on a $c$-cyclically monotone set.
\end{definition}

Consider the set
\begin{equation}
\label{E:Phicset}
\Phi_c := \Big\{ (\varphi,\psi) \in L^1(\mu) \times L^1(\nu): \varphi(x) + \psi(y) \leq c(x,y) \Big\}.
\end{equation}
Define for all $(\varphi,\psi)\in \Phi_c$ the functional
\begin{equation}
\label{E:Jfunct}
J(\varphi,\psi) := \int \varphi \mu + \int \psi \nu.
\end{equation}

The following is a well known result (see Theorem 5.10 of \cite{villa:Oldnew}).

\begin{theorem}[Kantorovich Duality]
\label{T:kanto}
Let X and Y be Polish spaces, let $\mu \in \mathcal{P}(X)$ and $\nu \in \mathcal{P}(Y)$, and let $c : X \times Y \to [0,+\infty]$ be lower semicontinuous. Then the following holds:
\begin{enumerate}
\item Kantorovich duality:
\[
\inf_{\pi \in \Pi(\mu,\nu)} \mathcal{I} (\pi) = \sup _{(\varphi,\psi)\in \Phi_{c}} J(\varphi,\psi).
\]
Moreover, the infimum on the left-hand side is attained and the right-hand side is also equal to
\[
\sup _{(\varphi,\psi)\in \Phi_{c}\cap C_{b}} J(\varphi,\psi),
\]
where $C_{b}= C_b(X, \erre) \times C_b(Y,\erre)$.
\item If $c$ is real valued and the optimal cost is finite, then there is a measurable $c$-cyclically monotone set $\Gamma \subset X\times Y$, closed if $c$ is continuous, such that for any $\pi \in \Pi(\mu,\nu)$ the following statements are equivalent:
\begin{enumerate}
\item $\pi$ is optimal;
\item $\pi$ is $c$-cyclically monotone;
\item $\pi$ is concentrated on $\Gamma$;
\item there exists a $c$-concave function $\f$ such that $\pi$-a.s. $\f(x)+\f^{c}(y)=c(x,y)$.
\end{enumerate}

\item If moreover
\[
c(x,y) \leq c_{X}(x) + c_{Y}(y), \quad \ c_{X}\ \mu\textrm{-integrable}, \ c_{Y}\ \nu\textrm{-integrable},
\]
then the supremum is attained:
\[
\sup_{\Phi_c} J = J(\f, \f^{c}) = \inf_{\pi \in \Pi(\mu,\nu)} \mathcal{I}(\pi).
\]
\end{enumerate}
\end{theorem}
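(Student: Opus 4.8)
The plan is to prove the three assertions in order, with the duality equality of (1) as the cornerstone; the structural equivalences of (2) and the attainment in (3) then follow from the standard machinery of $c$-cyclical monotonicity. The easy half of (1) is immediate: for any $(\varphi,\psi)\in\Phi_c$ and any $\pi\in\Pi(\mu,\nu)$, integrating the constraint $\varphi(x)+\psi(y)\le c(x,y)$ against $\pi$ and using the marginal conditions gives $J(\varphi,\psi)\le\mathcal I(\pi)$, whence $\sup_{\Phi_c}J\le\inf_\Pi\mathcal I$. Attainment of the infimum comes from compactness: $\mu$ and $\nu$ are tight, so $\Pi(\mu,\nu)$ is tight and narrowly relatively compact by Prokhorov, and it is narrowly closed; writing the lower semicontinuous cost $c$ as the increasing pointwise limit of bounded continuous functions shows that $\pi\mapsto\mathcal I(\pi)$ is narrowly lower semicontinuous, so a minimizer exists.

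The substance of (1) is the reverse inequality $\inf_\Pi\mathcal I\le\sup_{\Phi_c}J$, which I would obtain by Fenchel--Rockafellar duality on $E=C_b(X\times Y)$. Setting $\Theta(u)=0$ when $u\ge -c$ pointwise and $+\infty$ otherwise, and $\Xi(u)=\int\varphi\,d\mu+\int\psi\,d\nu$ when $u(x,y)=\varphi(x)+\psi(y)$ with $\varphi\in C_b(X)$, $\psi\in C_b(Y)$ and $+\infty$ otherwise, one has $\inf_E(\Theta+\Xi)=-\sup_{\Phi_c\cap C_b}J$, while computing the two conjugates identifies $\sup_{z^\ast\in E^\ast}[-\Theta^\ast(-z^\ast)-\Xi^\ast(z^\ast)]$ with $-\inf_\Pi\mathcal I$; here finiteness of the dual objective singles out exactly the nonnegative measures with marginals $\mu,\nu$, i.e.\ the elements of $\Pi(\mu,\nu)$. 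The hard point is analytic, not algebraic: the Riesz identification of $E^\ast$ with measures is clean only for compact $X,Y$, so I would prove the identity first for compact spaces and then transfer to general Polish spaces by inner regularity and tightness, together with an approximation of $c$ from below by bounded continuous costs. The same argument shows the supremum may be restricted to $\Phi_c\cap C_b$.

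For (2), I would first prove that every optimal $\pi$ is $c$-cyclically monotone: were it not, a finite cyclic configuration drawn from the support would violate the defining inequality, and a measure-theoretic rerouting of $\pi$ along that cycle would strictly decrease the cost, contradicting optimality (the rigorous form is the competitor construction of Ambrosio--Pratelli). Conversely, from any $c$-cyclically monotone set one constructs a $c$-concave potential $\varphi$ by R\"uschendorf's chained infimum, so that $\varphi(x)+\varphi^c(y)=c(x,y)$ holds on the set and hence $\pi$-a.s. The equivalences then close up: $(d)\Rightarrow(a)$ because integrating this identity yields $\mathcal I(\pi)=\int\varphi\,d\mu+\int\varphi^c\,d\nu\le\sup_{\Phi_c}J=\inf_\Pi\mathcal I$; and one takes the fixed set $\Gamma=\partial^c\varphi$, which is $c$-cyclically monotone, measurable, and closed whenever $c$ is continuous, and on which every optimal plan is concentrated.

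Finally, part (3) is a corollary of this construction: under the domination $c(x,y)\le c_X(x)+c_Y(y)$, the potential $\varphi$ and its transform $\varphi^c$ built above inherit the bounds by $c_X$ and $c_Y$, hence are $\mu$- and $\nu$-integrable, so $(\varphi,\varphi^c)\in\Phi_c$ and realizes the supremum. I expect the genuine obstacles to be the reverse inequality in (1) — precisely the functional-analytic passage from compact to Polish spaces and the approximation of the lower semicontinuous cost — together with the perturbation argument establishing that optimality forces $c$-cyclical monotonicity.
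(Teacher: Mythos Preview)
Your sketch is correct and follows essentially the standard argument (Fenchel--Rockafellar duality for the primal--dual equality, Prokhorov compactness for existence of a minimizer, the R\"uschendorf construction of a $c$-concave potential from a $c$-cyclically monotone set, and the Ambrosio--Pratelli rerouting to show optimality implies $c$-cyclical monotonicity). There is nothing to compare against, however: the paper does not prove this theorem at all but simply quotes it as a well-known result, referring to Theorem~5.10 of Villani's \emph{Optimal Transport: Old and New}. Your outline is in fact the skeleton of that very proof.
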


We recall also that if $-c$ is Souslin, then every optimal transference plan $\pi$ is concentrated on a $c$-cyclically monotone set \cite{biacar:cmono}.

\subsection{Approximate differentiability of transport maps}
The following results are taken from \cite{ambgiglsav:gradient} where are presented in fully generality.

\begin{definition}[Approximate limit and approximate differential]
Let $\Omega \subset \erre^{d}$ be an open set and $f:\Omega \to \erre^{m}$. We say that $f$ has an approximate limit (respectively, approximate differential) at $x \in \Omega$
if there exists a function $g:\Omega \to \erre^{m}$ continuous (resp. differentiable) at $x$ such that the set $\{ f \neq g\}$ has Lebesgue-density 0 at $x$.
In this case the approximate limit (resp. approximate differential) will be denoted by $\tilde f (x)$ (resp. $\tilde \nabla f(x)$).
\end{definition}

Recall that if $f:\Omega \to \erre^{m}$ is $\mathcal{L}^{d}$-measurable, then it has approximate limit $\tilde f (x)$ at $\mathcal{L}^{d}$-a.e. $x \in \Omega$
and $f(x) = \tilde f (x)$ $\mathcal{L}^{d}$-a.e.. 

Consider $m=d$ and denote with $\Sigma_{f}$ the Borel set of points where $f$ is approximate differential. 

\begin{lemma}[Density of the push-forward]\label{L:density}
Let $\rho \in L^{1}(\erre^{d})$ be a nonnegative function and assume that there exists a Borel set $\Sigma \subset \Sigma_{f}$ 
such that $\tilde f \llcorner_{\Sigma}$ is injective and $\{ \rho >0 \} \setminus \Sigma$ is $\mathcal{L}^{d}$-negligible.
Then $f_{\sharp} \rho \mathcal{L}^{d} \ll \mathcal{L}^{d}$ if and only if $|\det \tilde \nabla f|>0$ for $\mathcal{L}^{d}$-a.e. on $\Sigma$ and 
in this case
\begin{equation}\label{E:area}
f_{\sharp} (\rho \mathcal{L}^{d}) = \frac{\rho}{|\det \tilde \nabla f|} \circ \tilde f^{-1}\llcorner_{f(\Sigma)} \mathcal{L}^{d}.
\end{equation}
\end{lemma}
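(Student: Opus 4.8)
The plan is to prove Lemma \ref{L:density}, the area formula for push-forwards under an approximately differentiable map. The statement is essentially a measure-theoretic change of variables, so the natural strategy is to reduce to the classical area formula for (genuinely) differentiable maps by exploiting the fact that an $\elle^{d}$-measurable map agrees, up to an $\elle^{d}$-negligible set, with a Lipschitz map on sets of arbitrarily large measure. First I would invoke the Lusin-type approximation: there is a sequence of disjoint Borel sets $\{K_{n}\}$ covering $\elle^{d}$-almost all of $\Sigma$ such that on each $K_{n}$ the restriction $\tilde f \llcorner_{K_{n}}$ coincides with a $C^{1}$ (or Lipschitz) map $f_{n}$, and moreover $\tilde \nabla f = \nabla f_{n}$ $\elle^{d}$-a.e.\ on $K_{n}$. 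This is the standard consequence of approximate differentiability recalled just before the statement.

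Next I would apply the classical area formula to each $f_{n}$ on $K_{n}$. For a Lipschitz map the area formula gives
\[
\int_{K_{n}} \psi(x)\, |\det \nabla f_{n}(x)|\, \elle^{d}(dx) = \int_{f_{n}(K_{n})} \Big( \sum_{x \in f_{n}^{-1}(z) \cap K_{n}} \psi(x) \Big) \elle^{d}(dz)
\]
for every nonnegative Borel $\psi$. The injectivity hypothesis $\tilde f\llcorner_{\Sigma}$ injective makes the inner sum a single term, so that on each piece the formula becomes a genuine change of variables with Jacobian $|\det \tilde\nabla f|$. Taking $\psi = \rho / |\det\tilde\nabla f|$ on the set where the Jacobian is positive, summing over $n$, and using that $\{\rho>0\}\setminus\Sigma$ is $\elle^{d}$-negligible, one recovers \eqref{E:area} as an identity of measures tested against Borel functions. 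The equivalence ``$f_{\sharp}(\rho\elle^{d}) \ll \elle^{d}$ iff $|\det\tilde\nabla f|>0$ $\elle^{d}$-a.e.\ on $\Sigma$'' then follows by examining the contribution of the set $\{\det\tilde\nabla f = 0\}$: on that set the classical area formula shows $f$ pushes $\rho\elle^{d}$ onto an $\elle^{d}$-negligible image, so if that set carried positive $\rho\elle^{d}$-mass the push-forward would have a part concentrated on an $\elle^{d}$-null set, contradicting absolute continuity; conversely, positivity of the Jacobian a.e.\ together with injectivity yields the explicit density and hence absolute continuity.

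The main obstacle I anticipate is bookkeeping the two directions of the ``if and only if'' cleanly while controlling the set where the approximate Jacobian vanishes, rather than any single hard estimate. Concretely, I must verify that the classical area formula is legitimately applicable piecewise (matching $\tilde\nabla f$ with the pointwise differential of the approximating maps $\elle^{d}$-a.e.), and that the injectivity of $\tilde f\llcorner_{\Sigma}$ survives the decomposition so the multiplicity factor is identically one and the inverse $\tilde f^{-1}$ appearing in \eqref{E:area} is well-defined on $f(\Sigma)$. Since all of this is standard measure theory built on the approximate-differentiability facts recalled above, I expect the proof to be short; indeed the authors draw it directly from \cite{ambgiglsav:gradient}, so the cleanest route is to cite that reference for the area formula and only supply the reduction argument sketched here.
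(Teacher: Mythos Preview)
Your sketch is correct and is precisely the standard argument (Lusin-type reduction to Lipschitz pieces, classical area formula on each piece, injectivity to kill the multiplicity). Note, however, that the paper does not actually prove this lemma: it is stated as one of ``the following results \dots\ taken from \cite{ambgiglsav:gradient}'' and is simply cited without proof, so there is no paper-proof to compare against beyond the reference you already identified.
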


To conclude we include a regularity result for the Monge minimization problem in $\erre^{d}$ with cost $c_{p}(x,y) = |x-y|^{p}$, $p>1$ 
(Theorem 6.2.7 of \cite{ambgiglsav:gradient}): 
\begin{equation}\label{E:monge}
\min_{T: T_{\sharp}\mu = \nu} \int_{\erre^{d}}  c_{p}(x,T(x)) \mu(dx).
\end{equation}
\begin{theorem}\label{T:different}
Assume that $\mu \in \mathcal{P}^{r}(\erre^{d})$, $\nu \in \mathcal{P}(\erre^{d})$ and
\[
\mu\bigg( \Big\{ x\in \erre^{d} : \int c_{p}(x,y) \nu (dy)< +\infty  \Big\}\bigg), 
\nu\Big( \Big\{ y\in \erre^{d} : \int c_{p}(x,y) \mu (dx) < +\infty \Big\}\bigg) >0. 
\]
If the minimum of \eqref{E:Ifunct} is finite, then 
\begin{itemize}
\item[$i)$] there exists a unique solution $T_{p}$ for the Monge problem \eqref{E:monge}; 
\item[$ii)$] for $\mu$-a.e. $x \in \erre^{d}$ the map $T_{p}$ is approximately differentiable at $x$ and $\tilde \nabla T_{p}(x)$ 
is diagonalizable with nonnegative eigenvalues.
\end{itemize}
\end{theorem}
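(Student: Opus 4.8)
The plan is to reduce everything to the analysis of a Kantorovich potential and the first- and second-order differentiability of $c_{p}$-concave functions, exploiting the strict convexity of the cost. Throughout set $h(z):=|z|^{p}$, so that $c_{p}(x,y)=h(x-y)$; for $p>1$ the function $h$ is strictly convex, $\nabla h:\erre^{d}\to\erre^{d}$ is a homeomorphism, and its inverse is $\nabla h^{*}$, where $h^{*}$ is the Legendre conjugate. Since $c_{p}$ is continuous and the optimal cost is finite, Theorem \ref{T:kanto} provides an optimal plan $\pi$ concentrated on a $c_{p}$-cyclically monotone set together with a $c_{p}$-concave function $\f$ satisfying $\f(x)+\f^{c}(y)=c_{p}(x,y)$ $\pi$-a.e.; the integrability hypotheses on $\mu,\nu$ are what guarantee the potentials to be real valued on sets of full measure. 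I would first note that $\f$ is locally semiconcave: wherever the infimum defining $\f$ is attained at some $y$ with $x\neq y$, the smooth function $z\mapsto h(z-y)-\f^{c}(y)$ touches $\f$ from above at $x$. Hence $\f$ is locally Lipschitz, differentiable $\mathcal{L}^{d}$-a.e., and therefore $\mu$-a.e. because $\mu\in\mathcal{P}^{r}(\erre^{d})$. At a differentiability point $x$, the membership $(x,y)\in\partial^{c}\f$ forces $\nabla\f(x)=\nabla_{x}c_{p}(x,y)=\nabla h(x-y)$, so that $y$ is uniquely determined by
\[
T_{p}(x)=x-\nabla h^{*}(\nabla\f(x)).
\]
Every optimal plan is then concentrated on the graph of $T_{p}$, which yields at once $\pi=(\mathrm{Id}\times T_{p})_{\sharp}\mu$ and the uniqueness of the optimal map, proving $i)$.

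For $ii)$ I would upgrade this to second order. By Aleksandrov's theorem a locally semiconcave function admits a second-order Taylor expansion $\mathcal{L}^{d}$-a.e., hence $\mu$-a.e.; at such a point $x$ the gradient $\nabla\f$ is approximately differentiable with symmetric approximate Jacobian $\tilde\nabla^{2}\f(x)$. On $\{x=T_{p}(x)\}$ the map is the identity and $\tilde\nabla T_{p}=\mathbb{I}$ already has the desired form, so I would restrict to $\{x\neq T_{p}(x)\}$, where $\nabla h^{*}$ is smooth near $\nabla\f(x)$. The chain rule for approximate differentials then gives that $T_{p}$ is approximately differentiable $\mu$-a.e. with
\[
\tilde\nabla T_{p}(x)=\mathbb{I}-A(x)\,\tilde\nabla^{2}\f(x),\qquad A(x):=\nabla^{2}h^{*}(\nabla\f(x)),
\]
where $A(x)$ is symmetric and positive definite.

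The spectral structure is then pure linear algebra combined with one more use of the touching inequality. Writing $A=A(x)$ and $B=\tilde\nabla^{2}\f(x)$, the identity $A^{1/2}\big(\mathbb{I}-A^{1/2}BA^{1/2}\big)A^{-1/2}=\mathbb{I}-AB=\tilde\nabla T_{p}(x)$ exhibits $\tilde\nabla T_{p}(x)$ as similar to the symmetric matrix $\mathbb{I}-A^{1/2}BA^{1/2}$, hence diagonalizable with real eigenvalues. For the nonnegativity I would return to $g(z):=h(z-y)-\f^{c}(y)-\f(z)$ with $y=T_{p}(x)$, which is $\geq 0$ and vanishes at $x$, so $x$ is a minimum of $g$; comparing second-order terms at a point of twice approximate differentiability of $\f$ (where $h(\cdot-y)$ is smooth) gives $B\le\nabla^{2}h(x-T_{p}(x))$. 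Since $\nabla h$ and $\nabla h^{*}$ are mutually inverse, $\nabla^{2}h(x-T_{p}(x))=A^{-1}$, so $B\le A^{-1}$, equivalently $A^{1/2}BA^{1/2}\le\mathbb{I}$; thus the eigenvalues of $\mathbb{I}-A^{1/2}BA^{1/2}$, and hence of $\tilde\nabla T_{p}(x)$, are nonnegative, completing $ii)$.

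The part I expect to demand genuine care is the second-order step rather than the final algebra: one must combine Aleksandrov's a.e. twice-differentiability with the chain rule for approximate differentials precisely along the degeneracy set $\{x=T_{p}(x)\}$, where $h(z)=|z|^{p}$ fails to be twice differentiable, and the regularity of $\nabla h^{*}$ and $\nabla h$ behaves differently according to whether $p\le 2$ or $p>2$. Isolating this degenerate set and justifying that the approximate Hessian comparison passes through the composition is where the real work lies.
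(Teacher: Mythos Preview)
The paper does not give its own proof of this theorem: it is quoted verbatim as Theorem~6.2.7 of \cite{ambgiglsav:gradient} (Ambrosio--Gigli--Savar\'e, \emph{Gradient Flows}) and used as a black box in Section~\ref{S:finite}. There is therefore no ``paper's proof'' to compare against; what you have written is essentially the argument one finds in that reference --- Kantorovich potential, first-order differentiability of $\f$ giving $T_{p}(x)=x-\nabla h^{*}(\nabla\f(x))$, Aleksandrov second-order differentiability, and the conjugation $A^{1/2}(\mathbb{I}-A^{1/2}BA^{1/2})A^{-1/2}$ for the spectral claim.

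One genuine technical point deserves more than the disclaimer you give it at the end. Your opening move is to assert that $\f$ is locally semiconcave because it is an infimum of smooth functions touching from above. For $p\ge 2$ this is fine, since $|z|^{p}$ is $C^{1,1}_{\loc}$ and the touching functions have locally uniformly bounded Hessian. For $1<p<2$, however, $\nabla^{2}h$ blows up at the origin, so the family $z\mapsto h(z-y)-\f^{c}(y)$ is not uniformly semiconcave on any neighbourhood that meets $\{x=y\}$, and your inference of local semiconcavity of $\f$ (and with it the direct application of Aleksandrov's theorem) is not justified as stated. In \cite{ambgiglsav:gradient} this is handled by showing that $\f$ coincides, on larger and larger sets, with a Lipschitz and semiconcave function (obtained by restricting the infimum to $y$'s at a definite distance), and then invoking approximate differentiability rather than classical differentiability; this is precisely why the conclusion is phrased in terms of $\tilde\nabla T_{p}$ rather than $\nabla T_{p}$. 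Your final paragraph hints at this, but the body of the argument presents semiconcavity of $\f$ as established, which it is not for $1<p<2$.
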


\section{The Abstract Wiener space}
\label{s:wiener}

In this section we describe our setting. The main reference is \cite{boga:gauss}. 

Given an infinite dimensional separable Banach space $X$,
we denote by  $\|\cdot\|_{X}$ its norm and $X^{*}$ denotes the topological dual, with duality $\langle\cdot, \cdot\rangle$. 
Given the elements $x^{*}_{1}, \dots, x^{*}_{m}$ in $X^{*}$, we denote by $\Pi_{x^{*}_{1}, \dots, x^{*}_{m}} : X \to \erre^{m}$ the map
\[
\Pi_{x^{*}_{1}, \dots, x^{*}_{m}}( x) : = \left( \langle x, x^{*}_{1}\rangle, . . . , \langle x, x^{*}_{m}\rangle\right).
\] 
Denoted with $\mathcal{E}(X)$ the $\sigma$-algebra generated by $X^{*}$. 
A set $C \in \mathcal{E}(X)$ is called a \emph{cylindrical set} and if 
\[
C = \{ x \in X : \Pi_{\{x^{*}_{i}\}}  \in B \}, \quad B \subset \erre^{\infty}, \quad \{x^{*}_{i}\}_{i\in \enne} \subset X^{*} 
\]
we will denote the cylindrical sets with $C(B)$, and $B$ is the base of $C$. In our setting $\mathcal{B}(X)= \mathcal{E}(X)$.

Let $\gamma$ be a non-degenerate centred Gaussian measure defined on $X$. This means that $\gamma \in \mathcal{P}(X)$, is not concentrated on 
a proper subspace of $X$ and for every 
$x^{*} \in X^{*}$ the measure $x^{*}_{\sharp} \gamma$ is a centred Gaussian measure on $\erre$, that is, the Fourier transform of $\gamma$
is given by 
\[
\hat \gamma (x^{*}) = \int_{X} \exp\{ i \langle x^{*}, x \rangle \} \gamma(dx) = \exp \Big\{ -\frac{1}{2} \langle x^{*}, Q x^{*} \rangle   \Big\}
\]
where $Q \in L(X^{*},X)$ is the covariance operator. The non-degeneracy hypothesis of $\gamma$ is equivalent to $\langle x^{*},Qx^{*} \rangle > 0$ 
for every $x^{*} \neq 0$. The covariance operator $Q$ is symmetric, positive and uniquely determined by the relation 
\[
\langle y^{*}, Q x^{*} \rangle = \int_{X} \langle x^{*},x \rangle \langle y^{*} , x \rangle  \gamma(dx), \quad \forall x^{*},y^{*}\in X^{*}.
\]
The fact that $Q$ is bounded follows from the Fernique's Theorem, see \cite{boga:gauss}. This imply that any $x^{*} \in X^{*}$ defines a 
function $x \mapsto x^{*}(x)$ that belongs to $L^{p}(X,\gamma)$ for all $p\geq 1$. In particular let us denote by 
$R^{*}_{\gamma} : X^{*} \to L^{2}(X,\gamma)$ the embedding $R^{*}_{\gamma}x^{*}(x): = \langle x^{*}, x \rangle$. 
The space $\mathscr{H}$ given by the closure of $R^{*}_{\gamma}X^{*}$ in $L^{2}(X,\gamma)$ is called the \emph{reproducing kernel} of the Gaussian measure.
The definition is motivated by the fact that if we consider the operator $R_{\gamma} : \mathscr{H} \to X$ whose adjoint is $R_{\gamma}^{*}$ then 
$Q = R_{\gamma}R_{\gamma}^{*}$: 
\[
\langle y^{*}, R_{\gamma}R_{\gamma}^{*}x^{*}  \rangle = \langle R_{\gamma}^{*}y^{*},R_{\gamma}^{*}x^{*} \rangle_{\mathscr{H}} = 
\int_{X} \langle x^{*},x \rangle \langle y^{*} , x \rangle  \gamma(dx) = \langle y^{*}, Q x^{*} \rangle.
\]
It can proven that $R_{\gamma}$ is injective, compact and 
\begin{equation}\label{E:coord}
R_{\gamma}\hat h = \int_{X} \hat h(x) x \gamma(dx), \quad \hat h \in \mathscr{H},
\end{equation}
where the integral is understood in the Bochner or Pettis sense.

The space $H(\gamma) = R_{\gamma} \mathscr{H} \subset X$ is called the Cameron-Martin space. It is a separable Hilbert space with inner product inherited 
from $L^{2}(X,\gamma)$ via $R_{\gamma}$: 
\[
\langle h_{1} , h_{2}  \rangle_{H(\gamma)} = \langle \hat h_{1} , \hat h_{2}  \rangle_{\mathscr{H}}.
\]
for all $h_{1},h_{2} \in H$ with $h_{i} = R_{\gamma} \hat h_{i}$ for $i = 1,2$. Moreover $H$ is a dense subspace of $X$ and by the compactness of $R_{\gamma}$ follows that the embedding of $(H(\gamma),\| \cdot \|_{H(\gamma)})$ into $(X,\| \cdot \|)$ is compact. Note that if $X$ is infinite dimensional then $\gamma(H)=0$
and if $X$ is finite dimensional then $X=H(\gamma)$. 

\subsection{Finite dimensional approximations}\label{Ss:approx}

Using the embedding of $X^{*}$ in $L^{2}(X,\gamma)$ we say that a family $\{ x^{*}_{i} \} \subset X^{*}$ is orthonormal if the corresponding
family $\{ R_{\gamma}^{*} x^{*}_{i}\}$ is orthonormal in $\mathscr{H}$. 
In particular starting from a sequence $\{y^{*}_{i} \}_{i\in \enne} $ whose image under $R_{\gamma}^{*}$ is dense
in $\mathscr{H}$, we can obtain an orthonormal basis $R_{\gamma}^{*}x^{*}_{i}$ of $\mathscr{H}$. Therefore also 
$h_{j} = R_{\gamma}R_{\gamma}^{*}x^{*}_{j}$ provide an orthonormal basis in $H(\gamma)$.  

In the following we will consider a fixed orthonormal basis $\{e_{i}\}$ of $H(\gamma)$ with $e_{i} = R_{\gamma} \hat e_{i}$ for $\hat e_{i} \in R_{\gamma}^{*}X^{*}$.

\begin{proposition}[\cite{boga:gauss}, Proposition 3.8.12]\label{P:appro}
Let $\gamma$ be a centred Gaussian measure on a Banach space $X$ and $\{e_{i}\}$ an orthonormal basis in $H(\gamma)$.
Define $P_{d} x : = \sum_{i=1}^{d} \langle \hat e_{i}, x\rangle e_{i}$. Then the sequence of measures $\gamma_{d}:= P_{d\,\sharp}\gamma$ 
converges weakly to $\gamma$. 
\end{proposition}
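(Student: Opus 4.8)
The plan is to deduce the weak convergence $\gamma_d \weak \gamma$ from the $\gamma$-almost sure convergence $P_d x \to x$ in $(X,\|\cdot\|_X)$, since the latter upgrades to weak convergence by a one-line dominated convergence argument: for $f \in C_b(X)$ one has $f(P_d x) \to f(x)$ pointwise for $\gamma$-a.e.\ $x$ by continuity, while $|f(P_d x)| \le \|f\|_\infty$ is a (constant, hence integrable) dominating function, so $\int_X f\, d\gamma_d = \int_X f(P_d x)\,\gamma(dx) \to \int_X f\, d\gamma$. Thus the entire content lies in establishing the pointwise convergence of the finite rank projections $P_d$ to the identity, $\gamma$-almost everywhere.

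First I would record the relevant Hilbertian identities. Since $\{\hat e_i\}$ is orthonormal in $\mathscr{H} \subset L^2(X,\gamma)$ and the $\hat e_i$ lie in the Gaussian space $\mathscr{H}$, the functions $x \mapsto \langle \hat e_i, x\rangle$ form a sequence of independent standard Gaussians under $\gamma$. Writing $e_i = R_\gamma \hat e_i$, the adjoint relation gives $\langle x^*, e_i\rangle = \langle R_\gamma^* x^*, \hat e_i\rangle_{\mathscr{H}}$, so that $\langle x^*, P_d x\rangle = \sum_{i=1}^d \langle \hat e_i, x\rangle\,\langle x^*, e_i\rangle$ is centred Gaussian of variance $\sum_{i=1}^d \langle x^*, e_i\rangle^2$. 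Consequently
\[
\hat\gamma_d(x^*) = \exp\Big\{ -\tfrac12 \sum_{i=1}^d \langle x^*, e_i\rangle^2 \Big\},
\]
and by Parseval in $\mathscr{H}$ together with $Q = R_\gamma R_\gamma^*$ we get $\sum_{i=1}^\infty \langle x^*, e_i\rangle^2 = \|R_\gamma^* x^*\|_{\mathscr{H}}^2 = \langle x^*, Q x^*\rangle$, whence $\hat\gamma_d(x^*) \to \hat\gamma(x^*)$ for every $x^* \in X^*$. The very same Parseval identity, read now at the level of functions, shows that $\langle x^*, P_d\,\cdot\rangle = \sum_{i=1}^d \langle x^*, e_i\rangle\,\langle \hat e_i, \cdot\rangle$ converges in $L^2(X,\gamma)$ to $R_\gamma^* x^* = \langle x^*, \cdot\rangle$.

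The decisive step is to promote this one-dimensional $L^2$ (hence distributional) convergence to almost sure convergence of the $X$-valued partial sums $P_d x = \sum_{i=1}^d \langle \hat e_i, x\rangle\, e_i$. These are sums of independent symmetric $X$-valued summands, and the It\^o--Nisio theorem guarantees that convergence of the one-dimensional laws to those of the Radon measure $\gamma$ forces $P_d x$ to converge for $\gamma$-a.e.\ $x$ in the norm of $X$ to some $S$ distributed according to $\gamma$. Testing against a countable separating family $\{x^*_k\} \subset X^*$ and using the $L^2$ convergence above to identify $\langle x_k^*, S\rangle = \langle x_k^*, \cdot\rangle$ for $\gamma$-a.e.\ $x$ pins down $S = \mathrm{id}$, giving $P_d x \to x$ $\gamma$-almost everywhere. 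I expect this almost sure convergence to be the main obstacle: in infinite dimensions the pointwise convergence of characteristic functionals established above does \emph{not} by itself yield weak convergence, so one must either supply the symmetry and independence input underlying It\^o--Nisio, or else prove a uniform tightness bound for $\{\gamma_d\}$ directly (for instance by controlling $\int_X \|P_d x\|_X^2\,\gamma(dx)$ uniformly in $d$ via Fernique's theorem) and then conclude through Prokhorov's theorem together with the fact that characteristic functionals separate Radon measures.
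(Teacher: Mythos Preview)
The paper does not supply a proof of this proposition: it is quoted verbatim from Bogachev (Proposition~3.8.12 in \cite{boga:gauss}) and used as a black box, so there is no ``paper's own proof'' to compare against. Your argument is correct and is essentially the standard one found in that reference: the key input is the It\^o--Nisio theorem, which converts the pointwise convergence of characteristic functionals $\hat\gamma_d(x^*)\to\hat\gamma(x^*)$ (a Parseval computation in $\mathscr{H}$) into $\gamma$-almost sure convergence $P_d x\to x$ in $X$, after which dominated convergence against a bounded continuous test function is immediate. One small remark: the proposition as stated only assumes $\{e_i\}$ is an orthonormal basis of $H(\gamma)$, so the $\hat e_i$ need only lie in $\mathscr{H}$ rather than in $R_\gamma^* X^*$; this changes nothing in your argument, since elements of $\mathscr{H}$ are still $\gamma$-measurable linear functionals and orthonormality in $\mathscr{H}$ still yields independence of the standard Gaussians $\langle\hat e_i,\cdot\rangle$, which is all It\^o--Nisio requires.
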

The measure $\gamma_{d}$ defined above is a centred non-degenerate $d$-dimensional Gaussian measure
and, due to the orthonormality of $\{e_{i}\}_{i \in \enne}$, with identity covariance matrix.
Note that from \eqref{E:coord} it follows that $\langle \hat e_{j} , x \rangle = \langle e_{i} ,x \rangle_{H}$  for all $x \in H$. Hence
we will not specify whether the measures $\gamma_{d}$ is probability measures on $\erre^{d}$ or on $P_{d}H$:
\[
\gamma_{d} = \hat e_{1\,\sharp} \gamma \otimes \dots \otimes \hat e_{d\,\sharp} \gamma, \qquad
\hat e_{j\,\sharp} \gamma = \frac{1}{\sqrt{2\pi}} \exp\Big\{ -  \frac{x^{2}}{2} \Big\} \mathcal{L}^{1}.
\]
For every $d \in \enne$ we can disintegrate $\gamma$ w.r.t. the partition induced by the saturated sets of $P_{d}$: 
\begin{equation}\label{E:disintg}
\gamma = \int \gamma_{y,d}^{\perp} \gamma_{d}(dy), \quad \gamma_{y,d}^{\perp}(P_{d}^{-1}(y))=1 \quad \textrm{for}\ \gamma_{d}-\textrm{a.e.}\ y.
\end{equation}

\section{Optimal transportation in geodesic spaces}
\label{S:Optimal}
In what follows $(X,d,d_{L})$ is a generalized non-branching geodesic space in the sense of \cite{biacava:streconv}. 
In this Section we retrace, omitting the proof, the construction done in \cite{biacava:streconv} that permits to reduce the Monge problem 
with non-branching geodesic distance cost $d_{L}$, to a family of one dimensional transportation problems.
The triple $(X,\| \cdot \|,\| \cdot \|_{H(\gamma)})$ is a generalized non-branching geodesic space in the sense of \cite{biacava:streconv}.

Using only the $d_L$-cyclical monotonicity of $\Gamma$,  we obtain a partial order relation $G \subset X \times X$.
The set $G$ is analytic, and allows to define the transport ray set $R$, the transport sets $\mathcal T_e$, $\mathcal T$,
and the set of initial points $a$ and final points $b$.
Moreover we show that $R \llcorner_{\mathcal T \times \mathcal T}$ is an equivalence relation and that we can assume the set of final points $b$ to be $\mu$-negligible.

Let $\mu, \nu \in \mathcal{P}(X)$ and let $\pi \in \Pi(\mu,\nu)$ be a $d_L$-cyclically monotone transference plan with finite cost. 
By inner regularity, we can assume that the optimal transference plan is concentrated on a $\sigma$-compact $d_L$-cyclically monotone set 
$\Gamma \subset \{d_L(x,y) < +\infty\}$. By Lusin Theorem, we can require also that $d_L \llcorner_{\Gamma}$ is $\sigma$-continuous:
\begin{equation}\label{E:gamman}
\Gamma = \cup_n \Gamma_n, \ \Gamma_n \subset \Gamma_{n+1} \ \text{compact}, \quad d_L \llcorner_{\Gamma_n} \ \text{continuous.}
\end{equation}

Consider the set
\begin{align}
\label{E:gGamma}
\Gamma' :=&~ \bigg\{ (x,y) : \exists I \in \enne_0, (w_i,z_i) \in \Gamma \ \text{for} \ i = 0,\dots,I, \ z_I = y \crcr
&~ \qquad \qquad w_{I+1} = w_0 = x, \ \sum_{i=0}^I d_L(w_{i+1},z_i) - d_L(w_i,z_i) = 0 \bigg\}.
\end{align}
In other words, we concatenate points $(x,z), (w,y) \in \Gamma$ if they are initial and final point of a cycle with total cost $0$.
One can prove that $\Gamma \subset \Gamma' \subset \{d_L(x,y) < +\infty\}$, if $\Gamma$ is analytic so is $\Gamma'$
and if $\Gamma$ is $d_L$-cyclically monotone so is $\Gamma'$.

\begin{definition}[Transport rays]
\label{D:Gray}
Define the \emph{set of oriented transport rays}
\begin{equation}
\label{E:trG}
G := \Big\{ (x,y): \exists (w,z) \in \Gamma', d_L(w,x) + d_L(x,y) + d_L(y,z) = d_L(w,z) \Big\}.
\end{equation}

For $x \in X$, the \emph{outgoing transport rays from $x$} is the set $G(x)$ and the \emph{incoming transport rays in $x$} is the set $G^{-1}(x)$. Define the \emph{set of transport rays} as the set
\begin{equation}
\label{E:Rray}
R := G \cup G^{-1}.
\end{equation}
\end{definition}
It is fairly easy to prove that $G$ is still $d_{L}$-cyclically monotone, $\Gamma' \subset G \subset \{d_L(x,y) < +\infty\}$ and 
$G$ and $R$ are analytic sets.

\begin{definition} Define the \emph{transport sets}
\begin{subequations}
\label{E:TR0}
\begin{align}
\label{E:TR}
\mathcal T :=&~ P_1 \big( \textrm{graph}(G^{-1}) \setminus \{x = y\} \big) \cap P_1 \big( \textrm{graph}(G) \setminus \{x = y\} \big), \\
\label{E:TRe}
\mathcal T_e :=&~ P_1 \big( \textrm{graph}(G^{-1}) \setminus \{x = y\} \big) \cup P_1 \big( \textrm{graph}(G) \setminus \{x = y\} \big).
\end{align}
\end{subequations}
\end{definition}

From the definition of $G$ one can prove that $\mathcal{T}$, $\mathcal{T}_e$ are analytic sets. 
The subscript $e$ refers to the endpoints of the geodesics: we have
\begin{equation}
\label{E:RTedef}
\mathcal{T}_e = P_1(R \setminus \{x = y\}).
\end{equation}

It follows that we have only to study the Monge problem in $\mathcal{T}_e$: $\pi(\mathcal{T}_e \times \mathcal{T}_e \cup \{x = y\}) = 1$.
As a consequence, $\mu(\mathcal{T}_e) = \nu(\mathcal{T}_e)$ and any 
maps $T$ such that for $\nu \llcorner_{\mathcal{T}_e} = T_\sharp \mu \llcorner_{\mathcal{T}_e}$ can be extended to a 
map $T'$ such that $\nu = T_\sharp \mu$ with the same cost by setting
\begin{equation}
\label{E:extere}
T'(x) =
\begin{cases}
T(x) & x \in \mathcal{T}_e \crcr
x & x \notin \mathcal{T}_e
\end{cases}
\end{equation}

By the non-branching assumption, if $x \in \mathcal{T}$, then $R(x)$ is a single geodesic and therefore
the set $R \cap \mathcal{T} \times \mathcal{T}$ is an equivalence relation on $\mathcal{T}$ that we will call ray equivalence relation. 
Notice that the set $G$ is a partial order relation on $\mathcal{T}_e$.

The next step is to study the set $\mathcal{T}_{e}\setminus \mathcal{T}$.

\begin{definition}
\label{D:endpoint}
Define the multivalued \emph{endpoint graphs} by:
\begin{subequations}
\label{E:endpoint0}
\begin{align}
\label{E:endpointa}
a :=&~ \big\{ (x,y) \in G^{-1}: G^{-1}(y) \setminus \{y\} = \emptyset \big\}, \\
\label{E:endpointb}
b :=&~ \big\{ (x,y) \in G: G(y) \setminus \{y\} = \emptyset \big\}.
\end{align}
\end{subequations}
We call $P_2(a)$ the set of \emph{initial points} and $P_2(b)$ the set of \emph{final points}.
\end{definition}

Even if $a$, $b$ are not in the analytic class, still they belong to the $\sigma$-algebra $\mathcal{A}$.

\begin{proposition}
The following holds:
\begin{enumerate}
\item the sets
\[
a,b \subset X \times X, \quad a(A), b(A) \subset X,
\]
belong to the $\mathcal{A}$-class if $A$ analytic;
\item $a \cap b \cap \mathcal{T}_e \times X = \emptyset$;
\item $a(x)$, $b(x)$ are singleton or empty when $x \in \mathcal{T}$;
\item $a(\mathcal{T}) = a(\mathcal{T}_e)$, $b(\mathcal{T}) = b(\mathcal{T}_e)$;
\item $\mathcal{T}_e = \mathcal{T} \cup a(\mathcal{T}) \cup b(\mathcal{T})$, $\mathcal{T} \cap (a(\mathcal{T}) \cup b(\mathcal{T})) = \emptyset$.
\end{enumerate}
\end{proposition}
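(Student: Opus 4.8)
The plan is to handle the five assertions one at a time, deriving the measurability statement (1) from the projective calculus of Section~\ref{Ss:univmeas}, and the structural statements (2)--(5) from the order properties of $G$ together with the non-branching hypothesis.

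For (1) I would first record that the sets of initial and of final points are coanalytic. The set $\{y : G^{-1}(y)\setminus\{y\}\neq\emptyset\}=P_2\big(\gr(G)\setminus\{x=y\}\big)$ is the projection of an analytic set, hence analytic, so its complement $N_a:=\{y : G^{-1}(y)\setminus\{y\}=\emptyset\}$ belongs to $\Pi^1_1\subset\mathcal{A}$; symmetrically $N_b:=\{y:G(y)\setminus\{y\}=\emptyset\}\in\Pi^1_1$. By \eqref{E:endpointa}--\eqref{E:endpointb} we then have $a=G^{-1}\cap(X\times N_a)$ and $b=G\cap(X\times N_b)$, each an intersection of an analytic set with a coanalytic one, hence in $\mathcal{A}$. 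The delicate point is $a(A),b(A)$: the naive identity $a(A)=P_2\big(a\cap(A\times X)\big)$ only places them in $\Sigma^1_2$, since projecting an $\mathcal{A}$-set need not return an $\mathcal{A}$-set. Instead I would use the factorisation $a(A)=N_a\cap G^{-1}(A)$, where $G^{-1}(A)=P_1\big(G\cap(X\times A)\big)$ is analytic whenever $A$ is; this exhibits $a(A)$ as an intersection of a coanalytic and an analytic set, hence in $\mathcal{A}$, and likewise $b(A)=N_b\cap G(A)$ with $G(A)=P_2\big(G\cap(A\times X)\big)$.

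For (2), I would suppose $(x,y)\in a\cap b$ with $x\in\mathcal{T}_e$ and seek a contradiction. Membership in $b$ gives $(x,y)\in G$, i.e. $x\in G^{-1}(y)$, while membership in $a$ gives $G^{-1}(y)\subseteq\{y\}$; hence $x=y$. Then $(x,x)\in a\cap b$ forces $G^{-1}(x)\setminus\{x\}=\emptyset$ and $G(x)\setminus\{x\}=\emptyset$, so $R(x)\setminus\{x\}=\emptyset$ and therefore $x\notin P_1(R\setminus\{x=y\})=\mathcal{T}_e$ by \eqref{E:RTedef}, contradicting $x\in\mathcal{T}_e$.

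The geometric content sits in (3)--(5), where I would invoke the non-branching hypothesis: for $x\in\mathcal{T}$ the ray $R(x)$ is a single geodesic, along which $G$ is a linear order, and every nondegenerate geodesic has interior points, each lying in $\mathcal{T}$. Granting this, (3) follows because any $y\in a(x)$ satisfies $(y,x)\in G$, so $y\in R(x)$ is initial, and a linearly $G$-ordered single geodesic carries at most one initial point; hence $a(x)$ is a singleton or empty, and symmetrically for $b(x)$. For (4) and (5) the nontrivial inclusions use the interior-point fact: given $z\in a(\mathcal{T}_e)$, so that $z$ is initial with $(z,x)\in G$ for some $x\in\mathcal{T}_e$, I would choose an interior point $x'$ of a nondegenerate geodesic issuing from $z$ (towards $x$ if $z\neq x$, along an outgoing ray if $z=x$); then $x'\in\mathcal{T}$ and $z$ remains its initial point, so $z\in a(\mathcal{T})$, proving $a(\mathcal{T}_e)=a(\mathcal{T})$ and symmetrically $b(\mathcal{T}_e)=b(\mathcal{T})$. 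The same construction shows that every $x\in\mathcal{T}_e\setminus\mathcal{T}$, being by \eqref{E:TR}--\eqref{E:TRe} either a pure initial point or a pure final point, lies in $a(\mathcal{T})$ respectively $b(\mathcal{T})$, giving $\mathcal{T}_e=\mathcal{T}\cup a(\mathcal{T})\cup b(\mathcal{T})$; disjointness holds because a point of $a(\mathcal{T})$ (resp. $b(\mathcal{T})$) is initial (resp. final) and so cannot carry the nontrivial incoming (resp. outgoing) ray that membership in $\mathcal{T}$ requires. The main obstacle is twofold: staying inside $\mathcal{A}$ in (1) by resisting the projection and working with intersections instead, and, for (3)--(5), the geodesic-structure facts---single-geodesic rays, the linear order along a ray, and the existence of interior points in $\mathcal{T}$---which are not re-proved here but imported from the non-branching construction of \cite{biacava:streconv}.
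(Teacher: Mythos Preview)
Your argument is correct. Note, however, that the present paper does not actually prove this proposition: it is stated without proof in Section~\ref{S:Optimal} as part of the ``schematic summary of the construction'' imported from \cite{biacava:streconv}, so there is no in-paper proof to compare against. Your proposal is a faithful reconstruction of the expected argument---the factorisation $a(A)=N_a\cap P_1\big(G\cap(X\times A)\big)$ in (1) is exactly the device needed to stay in $\mathcal{A}$, and your treatment of (2)--(5) correctly isolates the inputs (linear $G$-order along a single geodesic, existence of interior points) that the non-branching hypothesis supplies. One remark: in the specific Wiener setting the paper actually obtains more, showing in Section~\ref{Ss:Wiener} that $G$, $R$, $a$, $b$ are $\sigma$-compact, which renders the projective bookkeeping of (1) unnecessary there; your argument is the one appropriate to the general $(X,d,d_L)$ framework of Section~\ref{S:Optimal}.
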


Finally we can assume that the $\mu$-measure of final points and the $\nu$-measure of the initial points are $0$:
indeed since the sets $G \cap b(\mathcal{T}) \times X$, $G \cap X \times a(\mathcal{T})$ is a subset of the graph of the identity map,
it follows that from the definition of $b$ one has that
\[
x \in b(\mathcal{T}) \quad \Longrightarrow \quad G(x) \setminus \{x\} = \emptyset,
\]
A similar computation holds for $a$. Hence we conclude that
\[
\pi (b(\mathcal{T}) \times X) = \pi(G \cap b(\mathcal{T}) \times X) = \pi(\{x = y\}),
\]
and following \eqref{E:extere} we can assume that
\[
\mu(b(\mathcal{T})) = \nu(a(\mathcal{T})) = 0.
\]

\subsection{The Wiener case}
\label{Ss:Wiener}

For the abstract Wiener space, it is possible to obtain more regularity for the sets introduced so far.
Let  $d= \norm$ and $d_{L} = \norm_{H}$: by the compactness of the embedding $R_{\gamma}$ of $H$ into X it follows that
\begin{itemize}
\item[(1)] $d_L : X \times X \to [0,+\infty]$ l.s.c. distance;
\item[(2)] $d_L(x,y) \geq C d(x,y)$ for some positive constant $C$;
\item[(3)] $\cup_{x \in K_1, y \in K_2} \gamma_{[x,y]}$ is $d$-compact if $K_1$, $K_2$ are $d$-compact, $d_L \llcorner_{K_1 \times K_2}$ uniformly bounded.
\end{itemize}

The set $\Gamma'$ is $\sigma$-compact: in fact, if one restrict to each $\Gamma_n$
given by \eqref{E:gamman}, then the set of cycles of order $I$ is compact, and thus
\begin{align*}
\Gamma'_{n,\bar I} :=&~ \bigg\{ (x,y) : \exists I \in \{0,\dots,\bar I\}, (w_i,z_i) \in \Gamma_n \ \text{for} \ i = 0,\dots,I, \ z_I = y \crcr
&~ \qquad \qquad w_{I+1} = w_0 = x, \ \sum_{i=0}^I d_L(w_{i+1},z_i) - d_L(w_i,z_i) = 0 \bigg\}
\end{align*}
is compact. Finally $\Gamma' = \cup_{n,I} \Gamma'_{n,I}$.

Moreover, $d_L \llcorner_{\Gamma'_{n,I}}$ is continuous. If $(x_n,y_n) \to (x,y)$, then from the l.s.c. and
\[
\sum_{i=0}^I d_L(w_{n,i+1},z_{n,i}) = \sum_{i=0}^I d_L(w_{n,i},z_{n,i}), \quad w_{n,I+1} = w_{n,0} = x_{n}, \ z_{n,I} = y_{n},
\]
it follows also that each $d_L(w_{n,i+1},z_{n,i})$ is continuous.

Similarly the sets $G$, $R$, $a$, $b$ are $\sigma$-compact: assumption (3) and the above computation in fact shows that
\[
G_{n,I} := \Big\{ (x,y): \exists (w,z) \in \Gamma'_{n,I}, d_L(w,x) + d_L(x,y) + d_L(y,z) = d_L(w,z) \Big\}
\]
is compact. For $a$, $b$, one uses the fact that projection of $\sigma$-compact sets is $\sigma$-compact.

So we have that $\Gamma$, $\Gamma'$, $G$, $G^{-1}$, $a$ and $b$ are $\sigma$-compact sets.

\subsection{Strongly consistency of disintegrations}\label{Ss:partition}
We recall the main results of \cite{biacava:streconv} that permit to define an order preserving map $g$ which maps our transport problem into a transport problem on $\mathcal S \times \R$, where $\mathcal S$ is a cross section of $R$.

The strong consistency of the disintegration follows from the next result.
\begin{proposition}
\label{P:sicogrF}
There exists a $\mu$-measurable cross section $f : \mathcal{T} \to \mathcal{T}$ for the ray equivalence relation $R$. 
\end{proposition}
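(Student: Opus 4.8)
The goal is to produce a $\mu$-measurable map $f:\mathcal{T}\to\mathcal{T}$ that picks exactly one point from each ray, i.e. a cross section of the ray equivalence relation $R\llcorner_{\mathcal{T}\times\mathcal{T}}$. The plan is to invoke Corollary \ref{C:weelsupprr} with the equivalence relation $R\llcorner_{\mathcal{T}\times\mathcal{T}}$, so the entire task reduces to verifying that this relation meets the hypotheses of that corollary: namely that (after restricting to a set carrying full $\mu$-mass) it can be realized as an $\mathcal{A}$-measurable subset $F\subset X\times X$ whose sections $F_x$ are closed. Recall from Corollary \ref{C:weelsupprr} that an $\mathcal{A}$-measurable $F$ with closed vertical sections automatically yields an $\mathcal{A}$-section that is constant on equivalence classes, which is precisely a cross section; since $\mathcal{A}$-measurable sets are universally measurable, such a section is in particular $\mu$-measurable.

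The first step is to recall, from Section \ref{Ss:Wiener}, that in the Wiener setting the sets $G$, $G^{-1}$, $R$, $a$, $b$ are all $\sigma$-compact, and that on $\mathcal{T}$ the relation $R\llcorner_{\mathcal{T}\times\mathcal{T}}$ is a genuine equivalence relation because every $x\in\mathcal{T}$ lies in the interior of a single (non-branching) geodesic, so $R(x)$ is one geodesic. I would then set $F:=R\llcorner_{\mathcal{T}\times\mathcal{T}}$ and check its two required properties. For measurability: $R$ is analytic (indeed $\sigma$-compact here) and $\mathcal{T}$ is analytic, so $F$ lies in the $\sigma$-algebra $\mathcal{A}$ generated by analytic sets, using that $\mathcal{A}$ is closed under the relevant Boolean operations as listed in Section \ref{Ss:univmeas}. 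For closedness of the sections $F_x=R(x)\cap\mathcal{T}$: each $R(x)$ is the support of a single geodesic, and property (3) of Section \ref{Ss:Wiener} guarantees that images of geodesics between compact sets are $d$-compact, which is what makes the vertical sections closed (or at least closed after the standard $\sigma$-compact exhaustion, passing to a $\mu$-full subset if necessary).

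With $F$ in hand, I would apply Corollary \ref{C:weelsupprr} directly: it produces an $\mathcal{A}$-measurable $f:P_1(F)\to X$ with $(x,f(x))\in F$ and $f$ constant on the classes of the relation $x\sim y\Leftrightarrow F(x)=F(y)$, which is exactly the ray equivalence relation on $\mathcal{T}$. Since $P_1(F)=\mathcal{T}$ and $f(x)$ lies on $R(x)\subset\mathcal{T}$, the map $f:\mathcal{T}\to\mathcal{T}$ is the desired cross section, and $\mathcal{A}$-measurability gives $\mu$-measurability.

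The main obstacle I anticipate is the verification that the vertical sections $F_x$ are closed (equivalently that $R(x)\cap\mathcal{T}$ is closed-valued), since $\mathcal{T}$ excludes endpoints and is only analytic, so a priori $R(x)\cap\mathcal{T}$ need not be closed even though $R(x)$ is compact. The genuine work is to show that one may discard a $\mu$-negligible saturated set so that on the remainder the closed-value hypothesis of Theorem \ref{T:KRN} (via Corollary \ref{C:weelsupprr}) applies; this is where the $\sigma$-compact structure of Section \ref{Ss:Wiener} and the topological properties of $\|\cdot\|_{H(\gamma)}$-geodesics viewed inside $(X,\|\cdot\|)$ are essential, exactly as flagged in the overview. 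Everything else is a bookkeeping application of the selection and disintegration machinery already assembled in Section \ref{S:preli}.
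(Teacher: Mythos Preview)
The paper does not prove Proposition~\ref{P:sicogrF}; it is quoted without proof from \cite{biacava:streconv} (see the opening of Section~\ref{S:Optimal} and Section~\ref{Ss:partition}). So there is no in-paper argument to compare against, and your plan via Corollary~\ref{C:weelsupprr} is exactly the intended mechanism: that corollary is included in Section~\ref{Ss:sele} precisely to feed this proposition.

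Your outline is correct in spirit, and you have put your finger on the only real issue: closedness of the vertical sections. There is, however, a cleaner way to handle it than ``discard a $\mu$-negligible saturated set'', which as stated does not explain why the remaining sections become closed. Apply Corollary~\ref{C:weelsupprr} with $F(x):=R(x)$ for $x\in\mathcal{T}$, \emph{not} with $R(x)\cap\mathcal{T}$. On $\mathcal{T}$ one still has $R(x)=R(y)\Leftrightarrow xRy$, so the induced relation is exactly the ray equivalence; and $R(x)$, being a single $\|\cdot\|_{H(\gamma)}$-geodesic (a segment, half-line, or line in a fixed Cameron--Martin direction), is $\|\cdot\|$-closed in $X$ by properties~(2)--(3) of Section~\ref{Ss:Wiener}. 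Measurability is immediate since $R$ is $\sigma$-compact. Corollary~\ref{C:weelsupprr} then yields an $\mathcal{A}$-measurable $f:\mathcal{T}\to\mathcal{T}_e$ constant on rays. The only residual point is that $f(x)$ might land at an endpoint; this is repaired measurably by moving along the ray via the ray map $g$ of Definition~\ref{D:mongemap} (e.g.\ replace $f(x)$ by $g(f(x),\tfrac12 d_L(f(x),b(x)))$ when $f(x)\in a(\mathcal{T})$, and symmetrically for $b$), or, more simply, by noting that for the disintegration and for everything downstream only a measurable quotient map is needed, and the requirement $f(\mathcal{T})\subset\mathcal{T}$ is cosmetic.
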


Up to a $\mu$-negligible saturated set $\mathcal{T}_N$, we can assume it to have $\sigma$-compact range: just let $S \subset f(\mathcal{T})$ be a $\sigma$-compact set where 
$f_\sharp \mu \llcorner_{\mathcal{T}}$ is concentrated, and set
\begin{equation}
\label{E:TNngel}
\mathcal{T}_S := R^{-1}(S) \cap \mathcal{T}, \quad \mathcal{T}_N := \mathcal{T} \setminus \mathcal{T}_S, \quad \mu(\mathcal{T}_N) = 0.
\end{equation}

Having the $\mu\llcorner_{\mathcal{T}}$-measurable cross-section
\[
\mathcal{S} := f(\mathcal{T})= S \cup f(\mathcal{T}_N) = (\textrm{Borel}) \cup (f(\text{$\mu$-negligible})),
\]
we can define the parametrization of $\mathcal{T}$ and $\mathcal{T}_e$ by geodesics.

Using the quotient map $f$, we obtain a unitary speed parametrization of the transport set.
\begin{definition}[Ray map]
\label{D:mongemap}
Define the \emph{ray map $g$} by the formula
\begin{align*}
g :=&~ \Big\{ (y,t,x): y \in \mathcal{S}, t \in [0,+\infty), x \in G(y) \cap \{d_L(x,y) = t\} \Big\} \crcr
&~ \cup \Big\{ (y,t,x): y \in \mathcal{S}, t \in (-\infty,0), x \in G^{-1}(y) \cap \{d_L(x,y) = -t\} \Big\} \crcr
=&~ g^+ \cup g^-.
\end{align*}
\end{definition}

\begin{proposition}
\label{P:gammaclass}
The following holds.
\begin{enumerate}
\item The restriction $g \cap S \times \R \times X$ is analytic.
\item The set $g$ is the graph of a map with range $\mathcal{T}_e$.
\item $t \mapsto g(y,t)$ is a $d_L$ $1$-Lipschitz $G$-order preserving for $y \in \mathcal{T}$.
\item $(t,y) \mapsto g(y,t)$ is bijective on $\mathcal{T}$, and its inverse is
\[
x \mapsto g^{-1}(x) = \big( f(y),\pm d_L(x,f(y)) \big)
\]
where $f$ is the quotient map of Proposition \ref{P:sicogrF} and the positive/negative sign depends on $x \in G(f(y))$/$x \in G^{-1}(f(y))$.
\end{enumerate}
\end{proposition}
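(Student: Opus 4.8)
The plan is to unwind Definition \ref{D:mongemap} and to verify the four assertions one by one, relying on two structural facts already in hand: the non-branching property of $\| \cdot \|_{H(\gamma)}$, which guarantees that $R(y)$ is a single geodesic for every $y \in \mathcal{T}$, and the $\mu$-measurable cross section $f$ of Proposition \ref{P:sicogrF}, whose $\sigma$-compact part was isolated as the set $S$ in \eqref{E:TNngel}. In this picture $G$ supplies the orientation of each ray and $d_L$ supplies a unit-speed parameter along it.

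For item (1) I would write the positive branch explicitly as
\[
g^+ \cap S \times \R \times X = \big\{ (y,t,x) : y \in S, \ (y,x) \in G, \ t \geq 0, \ d_L(x,y) = t \big\},
\]
and the negative branch analogously with $G^{-1}$ and $t < 0$. In the Wiener setting $G$ and $G^{-1}$ are $\sigma$-compact (Subsection \ref{Ss:Wiener}), $S$ is $\sigma$-compact, and on each compact piece of $G$ the function $d_L$ is continuous, so the constraint $t = d_L(x,y)$ cuts out a $\sigma$-compact graph; the displayed set is thus $\sigma$-compact, in particular analytic, and the union of the two branches stays analytic. The reason for restricting to $S \times \R \times X$ rather than to the full section $\mathcal{S} = S \cup f(\mathcal{T}_N)$ is precisely that $f(\mathcal{T}_N)$ is only a continuous image of a $\mu$-negligible set and need not be analytic.

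For items (2) and (3) I would fix $y \in \mathcal{S} \subset \mathcal{T}$ and invoke non-branching: since $R(y)$ is a single geodesic, for each admissible $t \geq 0$ there is at most one $x$ with $(y,x) \in G$ and $d_L(x,y) = t$, and symmetrically for $t < 0$ using $G^{-1}$; hence $g$ is single-valued, i.e.\ the graph of a map. Its range is contained in $\mathcal{T}_e$ by the very definition of $G$ and $G^{-1}$, and conversely any $x \in \mathcal{T}_e$ lies on the ray through its representative $f(x) \in \mathcal{S}$ at signed distance $\pm d_L(x,f(x))$, hence is attained; so the range is exactly $\mathcal{T}_e$. The additivity of $d_L$ along a geodesic built into the definition of $G$ gives $d_L(g(y,s),g(y,t)) = |t-s|$ for $s,t$ on the same ray, so $t \mapsto g(y,t)$ is $1$-Lipschitz (indeed an isometry onto its image), and increasing $t$ moves forward along the oriented ray, which is exactly the $G$-order preserving property.

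For item (4) injectivity of $(t,y) \mapsto g(y,t)$ on $\mathcal{T}$ follows from combining two facts: distinct points of $\mathcal{S}$ index disjoint rays because $\mathcal{S}$ is a cross section, while on a single ray the parameter $t$ is injective by item (3); surjectivity onto $\mathcal{T}$ is the range computation above restricted to interior points. The inverse is then read off directly: for $x \in \mathcal{T}$ the ray through $x$ meets $\mathcal{S}$ in the single point $f(x)$, so $g^{-1}(x) = \big( f(x), \pm d_L(x,f(x)) \big)$, with sign $+$ when $x \in G(f(x))$ and $-$ when $x \in G^{-1}(f(x))$. I expect the single delicate point to be the single-valuedness in item (2), since this is exactly where non-branching is indispensable: on a branching space several distinct geodesics could issue from $y$ and carry points at the same distance $t$. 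Everything else is bookkeeping on top of the cross section $f$ and the orientation recorded by $G$.
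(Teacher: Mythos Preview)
The paper does not actually prove this proposition: Section~\ref{S:Optimal} explicitly retraces the construction of \cite{biacava:streconv} ``omitting the proof'', and Proposition~\ref{P:gammaclass} is simply stated there without argument. There is therefore no in-paper proof to compare against; your sketch is the standard verification one would expect and is essentially correct.

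Two small remarks. First, in your range computation for item~(2) you write ``its representative $f(x) \in \mathcal{S}$'' for an arbitrary $x \in \mathcal{T}_e$, but $f$ is only defined on $\mathcal{T}$; for an endpoint $x \in a(\mathcal{T}) \cup b(\mathcal{T})$ you should instead pick any $z \in R(x) \cap \mathcal{T}$ and use $f(z)$. Second, your argument for item~(1) exploits the Wiener-specific $\sigma$-compactness of $G$ from Subsection~\ref{Ss:Wiener}; in the general setting of \cite{biacava:streconv} one only has $G$ analytic, and there one argues directly that
\[
\big\{ (y,t,x) : (y,x) \in G,\ t \geq 0,\ d_L(y,x) = t \big\}
\]
is analytic as the intersection of the analytic set $G$ (suitably embedded) with the Borel graph of $d_L$ and the Borel set $S \times [0,\infty) \times X$. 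Both points are cosmetic; the substance of your proof is sound.
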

Another property of $d_{L}$-cyclically monotone transference plans.
\begin{proposition}
\label{P:ortho}
For any $\pi$ $d_{L}$-monotone there exists a $d_L$-cyclically monotone transference plan $\tilde \pi$ with the same cost of $\pi$ such that it coincides with the identity on $\mu \wedge \nu$.
\end{proposition}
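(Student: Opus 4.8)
The plan is to exploit the one-dimensional reduction furnished by the ray map $g$ of Proposition \ref{P:gammaclass}. Along a single transport ray the cost $d_{L}$ equals the arc-length $|s-t|$ read in the parameter of $g$, and for the distance cost on $\erre$ the optimal cost between two measures depends only on the difference of their cumulative distribution functions. Since subtracting the common mass $\mu \wedge \nu$ leaves this difference unchanged, removing it cannot change the cost of a monotone coupling; this suggests building $\tilde\pi$ ray by ray, keeping the common mass in place and transporting only the remainder.

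Concretely, I would first disintegrate the data along the ray equivalence relation. Using the $\mu$-measurable cross section $f$ of Proposition \ref{P:sicogrF} and the map $g$, write $\mu = \int \mu_{y}\, m(dy)$, $\nu = \int \nu_{y}\, m(dy)$ and $\pi = \int \pi_{y}\, m(dy)$ with $m = f_{\sharp}\mu$, where $\mu_{y},\nu_{y}$ are concentrated on the ray $R(y)$ and, once read in the parameter of $g$, live on $\erre$. Because $\pi$ is $d_{L}$-cyclically monotone and concentrated on the oriented set $G$, each $\pi_{y}$ is supported on the forward pairs $\{s \le t\}$ and is the monotone, hence optimal, coupling of $\mu_{y}$ with $\nu_{y}$; in particular its cost equals $\int_{\erre} |F_{\mu_{y}} - F_{\nu_{y}}|$. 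As the rays are pairwise disjoint on $\mathcal{T}$, the common mass splits fibrewise, $\mu \wedge \nu = \int (\mu_{y} \wedge \nu_{y})\, m(dy)$.

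Next, on each ray set $\sigma_{y} := \mu_{y} \wedge \nu_{y}$, $\mu_{y} = \sigma_{y} + \mu_{y}'$, $\nu_{y} = \sigma_{y} + \nu_{y}'$, and let $\tilde\pi_{y}$ be the identity coupling of $\sigma_{y}$ together with the monotone coupling of $\mu_{y}'$ with $\nu_{y}'$; then glue, $\tilde\pi := \int \tilde\pi_{y}\, m(dy)$, reinstating the part of $\pi$ on $\{x=y\}$ outside $\mathcal{T}_{e}$. By construction $\tilde\pi \in \Pi(\mu,\nu)$ and it coincides with the identity on $\mu \wedge \nu$. The identity $F_{\mu_{y}'} - F_{\nu_{y}'} = F_{\mu_{y}} - F_{\nu_{y}}$ yields $\int_{\erre}|F_{\mu_{y}'} - F_{\nu_{y}'}| = \int_{\erre}|F_{\mu_{y}} - F_{\nu_{y}}|$, so the cost of $\tilde\pi_{y}$ equals that of $\pi_{y}$ for $m$-a.e. $y$; integrating gives $\mathcal{I}(\tilde\pi) = \mathcal{I}(\pi)$. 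The same comparison shows $F_{\mu_{y}'} \ge F_{\nu_{y}'}$, so the monotone coupling of the remainder still moves mass forward and therefore sits inside $G$.

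The remaining and most delicate point is cyclical monotonicity of $\tilde\pi$. By the previous step $\tilde\pi$ is concentrated on $G \cup \{x=y\}$, and $G$ is already $d_{L}$-cyclically monotone, so it suffices to prove the elementary lemma that adjoining the diagonal to a $d_{L}$-cyclically monotone set preserves $d_{L}$-cyclical monotonicity. I would obtain this by a splicing argument: in any finite competition a diagonal pair $(p,p)$ contributes zero to the left-hand side and, by the triangle inequality $d_{L}(x',y') \le d_{L}(x',p) + d_{L}(p,y')$, can be removed from the cycle without increasing the right-hand side, reducing to a cycle lying entirely in $G$. The main obstacle is thus not any single estimate but the bookkeeping: making the fibrewise construction measurable and verifying that the per-ray optimality together with the fibrewise splitting of $\mu \wedge \nu$ assemble into a genuine cyclically monotone plan on $X \times X$.
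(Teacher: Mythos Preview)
The paper does not prove Proposition~\ref{P:ortho}: it is one of the statements from \cite{biacava:streconv} that Section~\ref{S:Optimal} explicitly recalls ``omitting the proof'', so there is no in-paper argument to compare your route against.

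Your outline is the natural one and is correct in substance; two small points are worth tightening. First, the conditional plans $\pi_y$ need not be \emph{the} monotone coupling of $\mu_y$ with $\nu_y$ (optimal couplings for $|\cdot|$ on $\R$ are not unique). What you actually use, and what does follow from $\pi$ being concentrated on $G$, is that each $\pi_y$ is supported on $\{s\le t\}$; any such coupling has cost $\int t\,\nu_y(dt)-\int s\,\mu_y(ds)=\int_\R(F_{\mu_y}-F_{\nu_y})$, which is all your CDF computation needs. Second, your disintegration $\mu=\int\mu_y\,m(dy)$ only covers $\mathcal{T}$, and at this stage of the paper $\mu(a(\mathcal{T}))=0$ has not yet been established. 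This is harmless for the statement at hand because $\nu(a(\mathcal{T}))=\mu(b(\mathcal{T}))=0$ has already been recorded, so $(\mu\wedge\nu)\big(a(\mathcal{T})\cup b(\mathcal{T})\big)=0$; on those endpoint sets you may simply retain the original $\pi$, and the ``identity on $\mu\wedge\nu$'' requirement is vacuous there. Your splicing lemma---that adjoining the diagonal to a $d_L$-cyclically monotone set preserves $d_L$-cyclical monotonicity via the triangle inequality---is correct as written.
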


Coming back to the abstract Wiener space, we have that given $\mu,\nu \ll\gamma$ and given $\pi \in \Pi(\mu,\nu)$ $\norm_{H(\gamma)}$-cyclically monotone, 
we have constructed the transport $\mathcal{T}$ (and $\mathcal{T}_{e}$), an equivalence relation $R$ on it with geodesics as equivalence classes  
and the corresponding disintegration
is strongly consistent:
\begin{equation}\label{E:primadis}
\mu\llcorner_{\mathcal{T}} =  \int_{\mathcal{S}} \mu_{y} m(dy)
\end{equation}
with $m = f_{\sharp} \mu$ and $\mu_{y}(R(y)) = 1$ for $m$-a.e. $y \in \mathcal{T}$. Using the ray map $g$ one can assume that $\mu_{y} \in \mathcal{P}(\erre)$ and 
\[
\mu\llcorner_{\mathcal{T}} = g_{\sharp}   \int_{\mathcal{S}} \mu_{y} m(dy) .
\]

\section{Regularity of disintegration}
\label{S:disin}
To obtain existence of an optimal transport map it is enough to prove that $\mu$ in concentrated on $\mathcal{T}$ and 
$\mu_{y}$ is a continuous measure for $m$-a.e. $y \in \mathcal{S}$. Indeed at that point, for every $y \in \mathcal{S}$ we consider 
the unique monotone map $T_{y}$ such that $T_{y\,\sharp}\mu_{y}=\nu_{y}$, then $T(g(y,t)): = T_{y}(g(y,t))$ is an optimal transport map,
see Theorem 6.2 of \cite{biacava:streconv}.

In Section \ref{S:disin} we introduce the fundamental regularity assumption (Assumption \ref{A:NDEatom}) on the measure $\gamma$ 
and we show that it implies the $\gamma$-negligibility of the set of initial points. Consequently we obtain a disintegration of $\mu$
on the whole space. 
From Assumption \ref{A:NDEatom} it also follows  that the disintegration of $\mu$ w.r.t. the ray equivalence relation $R$ 
has continuous conditions probabilities.

Define the map $ X\times X \ni (x,y) \mapsto  T_{t}(x,y) : =  x (1-t ) + y t$.

\begin{assumption}[Non-degeneracy assumption]
\label{A:NDEatom} The measure $\gamma$ is said to satisfy Assumption \ref{A:NDEatom} w.r.t. a $\norm_{H(\gamma)}$-cyclically monotone set 
$\Gamma$ if
\begin{itemize}
\item[i)] $\pi(\Gamma)=1$ with $\pi \in \Pi(\mu,\nu)$ and $\mu,\nu \ll \gamma$;
\item [ii)] for each closed set $A$ such that $\mu (A) > 0$ there exists $C >0 $ and $\{ t_{n} \}_{n\in \enne} \subset [0,1]$ converging to $0$ as 
$n\to +\infty$ such that 
\[
\gamma(  T_{t_{n}}(\Gamma \cap A\times X) ) \geq C \mu (A)
\]
for all $n \in \enne$. 
\end{itemize}
\end{assumption}

Clearly it is enough to verify Assumption \ref{A:NDEatom} for $A$ compact set. 

An immediate consequence of the Assumption \ref{A:NDEatom} is that the final points are $\gamma$-negligible. 
\begin{proposition}
\label{P:puntini}
If $\gamma$ satisfies Assumption \eqref{A:NDEatom} then
\[
\mu( a(\mathcal{T}_e)) = 0.
\]
\end{proposition}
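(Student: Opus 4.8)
The plan is to argue by contradiction, exploiting the fact that an initial point can never lie in the interior of a transport ray: consequently the short-time evolution of a compact set of initial points is squeezed into a vanishing $\| \cdot \|$-neighbourhood of that set, whereas Assumption \ref{A:NDEatom} forces it to retain a fixed positive $\gamma$-mass.

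First I would reduce to compact sets. Since $a(\mathcal{T}_e)$ belongs to the $\mathcal{A}$-class and is therefore universally measurable, by inner regularity of $\mu$ it suffices to prove $\mu(A)=0$ for every compact $A \subset a(\mathcal{T}_e)$. Suppose, towards a contradiction, that $\mu(A)>0$ for one such $A$, and let $C>0$ and $t_n \searrow 0$ be given by Assumption \ref{A:NDEatom}, so that $\gamma\big(T_{t_n}(\Gamma \cap A \times X)\big) \geq C\mu(A)$ for every $n$. The diagonal $\{x=y\}$ may be discarded, since there the transport is the identity.

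The geometric heart of the argument is a disjointness statement. Fix $(x,y)\in\Gamma$ with $x\in A$, $x\neq y$, and $t\in(0,1)$. As $x-y\in H(\gamma)$ and the segment $s\mapsto (1-s)x+sy$ is a $\|\cdot\|_{H(\gamma)}$-geodesic, the point $z=T_t(x,y)$ satisfies $(x,z)\in G$ and $(z,y)\in G$ with $x\neq z\neq y$, whence $z\in\mathcal{T}$. Since $A\subset a(\mathcal{T}_e)=a(\mathcal{T})$ is disjoint from $\mathcal{T}$, it follows that $T_t(\Gamma\cap A\times X)\cap A=\emptyset$ for all $t\in(0,1)$. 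Next I would quantify the displacement: by property (2) of Section \ref{Ss:Wiener} there is $\kappa>0$ with $\|u-v\|\leq \kappa\,\|u-v\|_{H(\gamma)}$, so whenever $d_{L}(x,y)\leq M$ one has $\|T_t(x,y)-x\|=t\,\|y-x\|\leq \kappa M t$. Hence, on the part of $\Gamma\cap A\times X$ where $d_{L}\leq M$, the evolved set is contained in $\{z: 0<\dist(z,A)\leq \kappa M t_n\}$, which sits between the closed $\kappa M t_n$-neighbourhood $N_n$ of the compact set $A$ and its complement of $A$. As $N_n\downarrow A$ we get $\gamma(N_n)\to\gamma(A)$, so the $\gamma$-measure of the evolved set is at most $\gamma(N_n)-\gamma(A)\to 0$. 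This contradicts the lower bound $C\mu(A)>0$ and yields $\mu(A)=0$.

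The main obstacle is that the displacement estimate is uniform only where the ray length $d_{L}(x,y)$ stays bounded, while a priori the rays issuing from $A$ may be arbitrarily long, and the lower bound of Assumption \ref{A:NDEatom} refers to the \emph{full} set $\Gamma\cap A\times X$. I would remove this using the $\sigma$-compact exhaustion of Section \ref{Ss:Wiener}: writing $\Gamma=\cup_n\Gamma_n$ with each $\Gamma_n$ compact and $d_{L}\llcorner_{\Gamma_n}$ continuous (hence bounded by some $M_n$), one localizes $A$ to the compact pieces $A\cap P_1\big(\Gamma_n\cap\{d_{L}\leq M\}\big)$, on which every ray used in the evolution has length at most $M$, runs the neighbourhood argument there, and checks, via $\pi(\Gamma_n)\to 1$ together with $d_{L}\in L^{1}(\pi)$, that the portion of $A$ emitting arbitrarily long rays is $\mu$-negligible. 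Reconciling the full-$\Gamma$ lower bound with this bounded-length upper bound is the delicate technical point of the proof.
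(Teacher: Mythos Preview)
Your overall strategy—contradiction via disjointness of the evolved set from $A$, followed by a shrinking-neighbourhood argument—is exactly the paper's. Your formulation of the contradiction ($\gamma(N_n\setminus A)\to 0$ against the fixed lower bound $C\mu(A)$) is in fact slightly cleaner than the paper's, which additionally restricts to $\{\rho_1\ge\delta\}$ in order to convert $\mu(\hat A)$ into a multiple of $\gamma(\hat A)$.

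The place where you diverge is precisely the point you flag as delicate, and your proposed resolution does not work as written. Localizing $A$ to $A\cap P_1\big(\Gamma_n\cap\{d_L\le M\}\big)$ does \emph{not} ensure that every ray used in the evolution has length at most $M$: membership in this projection says only that \emph{some} $(x,y)\in\Gamma_n$ satisfies $d_L(x,y)\le M$, while the set $T_t(\Gamma\cap A\times X)$ is built from \emph{all} pairs $(x,y)\in\Gamma$ with $x\in A$, including possibly long ones lying in other $\Gamma_m$. Likewise, $d_L\in L^1(\pi)$ and $\pi(\Gamma_n)\to 1$ control $\pi(\{d_L>M\})$ but not $\mu\big(P_1(\Gamma\cap\{d_L>M\})\big)$, which is what your argument would actually need.

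The paper sidesteps the reconciliation problem by restricting $A$ in the opposite direction: it selects a compact $\hat A\subset A$ with $\mu(\hat A)>0$ such that $\Gamma\cap(\hat A\times X)\subset\{d_L\le M\}$, i.e.\ $\hat A$ is taken inside the \emph{complement} of $P_1(\Gamma\cap\{d_L>M\})$. Since Assumption~\ref{A:NDEatom} may be applied to any closed set of positive $\mu$-measure, one then invokes it for $\hat A$ itself (obtaining its own $C$ and $t_n$), and on $\hat A$ the displacement bound $T_{t}(\Gamma\cap\hat A\times X)\subset \hat A^{\,tM}$ holds for the full $\Gamma$. This eliminates the need to match a full-$\Gamma$ lower bound against a restricted-$\Gamma$ upper bound.
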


\begin{proof}
Let $A= a(\mathcal{T}_{e})$. Suppose by contradiction $\mu( A)>0$.
By inner regularity and $\Gamma \subset \{ (x,y) : \|x-y\|_{H(\gamma)}< + \infty \}$, 
there exists a compact set $\hat A \subset A$ such that $\mu (\hat A)>0$ and  $\rho_{1}(x)\geq \delta$ for all $x\in \hat A$ and for some constant $\delta>0$.
Moreover we can assume that
\[
\Gamma \cap \hat A \times X \subset \{ (x,y) : \|x-y\|_{H(\gamma)}\leq M \}
\] 
for some positive $M \in \erre$. 

By Assumption \ref{A:NDEatom} there exist $C > 0$ and 
$\{ t_{n} \}_{n\in \enne}$ converging to 0 such that
\[
\gamma(  T_{t_{n}}(\Gamma \cap \hat A \times X) ) \geq C \mu ( \hat A ) \geq \delta C  \gamma(\hat A).
\]
Denote with $\hat A_{t_{n}} = T_{t_{n}}(\Gamma \cap \hat A \times X)$ and define $ \hat A^{\ve}:= \big\{ x : \| \hat A -x\|_{H(\gamma)} < \ve \big\}$. 
Since $\hat A \subset A = a(\mathcal{T}_{e})$,  $\hat A_{t_{n}} \cap \hat A = \emptyset$ for every $n \in \enne$. 
Moreover for $t_{n}\leq\ve/M$ it holds $\hat A^{\ve} \supset \hat A_{t_{n}}$. So we have for $t_{n}$ small enough
\[ 
\gamma(\hat A^{\ve}) \geq \gamma(\hat A) + \gamma(\hat A_{t_{n}})  \geq (1+ C\delta) \gamma (\hat A).
\]
Since $\gamma(\hat A) = \lim_{\ve \to 0} \gamma (\hat A^{\ve})$, this is a contradiction.
\end{proof}

It follows that $\mu(\mathcal{T}) = 1$, therefore we can use the Disintegration Theorem \ref{T:disintr} to write
\begin{equation}
\label{E:disintT}
\mu = \int_S \mu_y m(dy), \quad m = f_\sharp \mu, \ \mu_y \in \mathcal{P}(R(y)).
\end{equation}
The disintegration is strongly consistent since the quotient map $f : \mathcal T \to \mathcal T$ is $\mu$-measurable and $(\mathcal T,\mathcal B(\mathcal T))$ is countably generated. 

The second consequence of Assumption \ref{A:NDEatom} is that $\mu_y$ is continuous, i.e. $\mu_y(\{x\}) = 0$ for all $x \in X$.

\begin{proposition}
\label{P:nonatoms}
If $\gamma$ satisfies Assumption \ref{A:NDEatom} then the conditional probabilities $\mu_y$ are continuous for $m_{\gamma}$-a.e. $y \in S$.
\end{proposition}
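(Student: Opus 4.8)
The plan is to argue by contradiction, reproducing as closely as possible the mechanism of Proposition \ref{P:puntini}. Suppose that the set of $y \in \mathcal{S}$ for which $\mu_y$ has at least one atom is not $m$-negligible. Since each $\mu_y$ is a probability measure on the ray $R(y)$, it carries at most countably many atoms, and the atoms of mass $\geq w$ number at most $1/w$; hence there exist a constant $w_0 > 0$ and a set $S' \subset \mathcal{S}$ with $m(S') > 0$ such that for every $y \in S'$ the measure $\mu_y$ has an atom of mass at least $w_0$. Recall that by Proposition \ref{P:puntini} we already have $\mu(\mathcal{T}) = 1$, so these atoms are interior points of their rays. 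The goal is to select measurably one such atom $x(y)$ per ray, form a compact set $\hat A$ of atoms with $\mu(\hat A) > 0$, and then evolve it: the decisive point will be that, exactly as for the set of initial points in Proposition \ref{P:puntini}, the evolved set $\hat A_{t_n} := T_{t_n}(\Gamma \cap \hat A \times X)$ is disjoint from $\hat A$ and collapses onto $\hat A$ as $t_n \to 0$, which is incompatible with the lower bound furnished by Assumption \ref{A:NDEatom}.

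First I would perform the selection. By Proposition \ref{P:ortho} we may assume that $\pi$ coincides with the identity on the common mass $\mu \wedge \nu$, so that the atomic mass we are after belongs to the genuinely transported part; in particular the selected atoms satisfy $(x(y),x(y)) \notin \Gamma$, i.e. their whole mass is moved off $x(y)$ along $R(y)$. Using the strong consistency of the disintegration \eqref{E:disintT} and a measurable selection (for instance picking, for each $y \in S'$, the largest atom in a fixed enumeration), one obtains an $m$-measurable map $y \mapsto x(y)$ with $\mu_y(\{x(y)\}) \geq w_0$. Setting $A := \{x(y) : y \in S'\}$ and using that an interior point determines its ray uniquely, one has $\mu(A) = \int_{S'} \mu_y(\{x(y)\}) m(dy) \geq w_0\, m(S') > 0$. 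By inner regularity, and since $\Gamma \subset \{\|x-y\|_{H(\gamma)} < +\infty\}$, I would then extract a compact $\hat A \subset A$ with $\mu(\hat A) > 0$, on which the density $\rho_1 \geq \delta > 0$ and $\Gamma \cap \hat A \times X \subset \{\|x-y\|_{H(\gamma)} \leq M\}$ for some $M < +\infty$, exactly as in Proposition \ref{P:puntini}.

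Then I would run the contradiction. By Assumption \ref{A:NDEatom} there are $C > 0$ and $t_n \searrow 0$ with $\gamma(\hat A_{t_n}) \geq C\mu(\hat A)$ for all $n$. The crucial geometric observation is that $\hat A_{t_n} \cap \hat A = \emptyset$: a point of $\hat A_{t_n}$ has the form $x(y) + t_n(z - x(y))$ with $(x(y),z) \in \Gamma$ and $z \neq x(y)$, hence it is an interior point of $R(y)$ distinct from the unique selected atom $x(y)$ on that ray, and, rays being disjoint on $\mathcal{T}$, it cannot equal $x(y')$ for any other $y'$. Moreover $\|x(y) + t_n(z - x(y)) - x(y)\|_{H(\gamma)} = t_n\|z - x(y)\|_{H(\gamma)} \leq t_n M$, so for $t_n \leq \ve/M$ one has $\hat A_{t_n} \subset \hat A^{\ve}$, where $\hat A^{\ve} := \{x : \|\hat A - x\|_{H(\gamma)} < \ve\}$. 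Combining disjointness and containment,
\[
\gamma(\hat A^{\ve}) \geq \gamma(\hat A) + \gamma(\hat A_{t_n}) \geq \gamma(\hat A) + C\mu(\hat A),
\]
and letting $\ve \to 0$ (choosing $t_n \leq \ve/M$ at each stage) gives $\gamma(\hat A) \geq \gamma(\hat A) + C\mu(\hat A)$, i.e. $\mu(\hat A) \leq 0$, a contradiction. Thus the set of rays carrying an atom is $m$-negligible and $\mu_y$ is continuous for $m$-a.e. $y$.

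I expect the main obstacle to be the selection step rather than the final estimate. Ensuring that the chosen atoms are genuinely moving --- so that $\hat A_{t_n} \cap \hat A = \emptyset$ --- is where the reduction to the off-diagonal part via Proposition \ref{P:ortho} is essential: a partially stationary atom would lie in every $\hat A_{t_n}$ and destroy the disjointness. Making the choice $y \mapsto x(y)$ measurable, with a uniform mass bound $w_0$, also requires some care with the disintegration, although it is standard; once $\hat A$ is in hand the argument is a verbatim repetition of Proposition \ref{P:puntini}, the only change being the reason for the disjointness $\hat A_{t_n} \cap \hat A = \emptyset$.
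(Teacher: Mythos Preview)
Your approach is essentially the paper's: select one atom per ray, evolve the resulting set via $T_{t_n}$ under Assumption~\ref{A:NDEatom}, argue disjointness from $\hat A$, and derive a contradiction exactly as in Proposition~\ref{P:puntini}. The paper carries out the selection differently---it first restricts to a compact $K\subset\mathcal{S}$ on which $y\mapsto\mu_y$ is weakly continuous, shows that the atom set $A=\{(y,x):\mu_y(\{x\})>0\}$ is $\sigma$-closed (hence Borel), and then invokes Lusin's theorem to extract a Borel graph---whereas you pass through a uniform mass lower bound $w_0$ and a generic measurable selection; both routes work, though the paper's is more explicit about measurability. One caution: your appeal to Proposition~\ref{P:ortho} to conclude $(x(y),x(y))\notin\Gamma$ is not quite a complete deduction, since even after arranging $\pi$ to be the identity on $\mu\wedge\nu$, the carrier $\Gamma$ may still contain diagonal points $(x,x)$ with $x\in\mathrm{supp}(\mu\wedge\nu)$, and nothing a priori prevents the selected atoms from lying there. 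The paper, for its part, simply asserts the disjointness $T_{t_n}(\Gamma\cap P_2(\hat A)\times X)\cap P_2(\hat A)=\emptyset$ without elaboration, so this is a place where both arguments are terse rather than a structural defect in your strategy.
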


\begin{proof}
From the regularity of the disintegration and the fact that $m(S) = 1$, we can assume that the map $y \mapsto \mu_y$ is weakly continuous on a 
compact set $K \subset S$ of comeasure $<\ve$. 
It is enough to prove the proposition on $K$.

{\it Step 1.} From the continuity of $K \ni y \mapsto \mu_y \in \mathcal{P}(X)$ w.r.t. the weak topology, it follows that the map
\[
y \mapsto A(y) := \big\{ x \in R(y): \mu_y(\{x\}) > 0 \big\} = \cup_n \big\{ x \in R(y): \mu_y(\{x\}) \geq 2^{-n} \big\}
\]
is $\sigma$-closed: in fact, if $(y_m,x_m) \to (y,x)$ and $\mu_{y_m}(\{x_m\}) \geq 2^{-n}$, then $\mu_y(\{x\}) \geq 2^{-n}$ by u.s.c. on compact sets.
Hence $A$ is Borel.

{\it Step 2.} The claim is equivalent to $\mu( P_{2}(A))=0$. Suppose by contradiction $\mu(P_{2}(A))>0$.
By Lusin Theorem (Theorem 5.8.11 of \cite{Sri:courseborel}) 
$A$ is the countable union of Borel graphs.
Therefore we can take a Borel selection of $A$ just considering one of the Borel graphs, say $\hat A$. 
Clearly $m (P_{1}(\hat A)) > 0$ hence by \eqref{E:disintT} $\mu(P_{2}(\hat A))>0$. 
By Assumption \ref{A:NDEatom}  $\gamma( T_{t_{n}}(\Gamma \cap  P_{2}(\hat A) \times X) ) \geq C \mu(P_{2}(\hat A))$ for some $C>0$ and $t_{n}\to 0$.
From $T_{t_{n}}(\Gamma \cap P_{2}(\hat A) \times X) \cap (P_{2}(\hat A)) = \emptyset$,  using the same argument of Proposition \ref{P:puntini}, the claim follows.
\end{proof}

\section{An approximation result}
\label{S:approx}

In Section \ref{S:disin} we proved that if $\gamma$ satisfies Assumption \ref{A:NDEatom} w.r.t. a $\norm_{H(\gamma)}$-cyclically monotone set 
$\Gamma$ such that $\pi(\Gamma)=1$, then the disintegration of $\mu$ has enough regularity to solve the Monge minimization problem. 

The remaining part of the paper will be devoted to proving that the centred non-degenerate Gaussian measure $\gamma$ 
verifies Assumption \ref{A:NDEatom} w.r.t. a $\norm_{H(\gamma)}$-cyclically monotone set 
$\Gamma$ such that $\pi(\Gamma)=1$ with $\pi \in \Pi(\mu,\nu)$ and $\mu,\nu \ll \gamma$.
In particular in this section we prove that it is enough to verify Assumption \ref{A:NDEatom} for a well prepared finite dimensional approximation, 
provided some uniformity holds.

Let $P_{d} : X \to H$ be the projection map of Proposition \ref{P:appro} associated to the orthonormal basis $\{e_{i}\}_{i \in \enne}$ of $H(\gamma)$  
with $e_{i} = R_{\gamma} \hat e_{i}$ for $\hat e_{i} \in R_{\gamma}^{*}X^{*}$ and  $P_{d\,\sharp} \gamma = \gamma_{d}$.

Consider the following measures 
\begin{equation}\label{E:approxvera}
\mu_{d} : = P_{d\,\sharp} \mu, \qquad \nu_{d} : = P_{d\,\sharp} \nu
\end{equation}
and observe that $\mu_{d}= \rho_{1,d} \gamma_{d}$ and $\nu_{d}= \rho_{2,d} \gamma_{d}$
with 
\begin{equation}\label{E:densita}
\rho_{i,d}(z) = \int \rho_{i}(x) \gamma_{z,d}^{\perp} (dx), \qquad i =1,2,
\end{equation}
where $\gamma_{z,d}$ is defined in \ref{E:disintg}. Recall that $\mu_{d} \weak \mu$ and $\nu_{d} \weak \nu$ as $d \nearrow + \infty$.
Since $\rho_{i,d}$ depend only on the first $d$-coordinates, the measures $\mu_{d},\nu_{d}$
can be considered as probability measure on $\erre^{d}$. Therefore
we can study the transport problem with euclidean norm cost $\|x \|_{d}^{2} : = \sum_{j=1}^{d} x_{j}^{2}$:
\begin{equation}\label{E:traspapprox}
\min_{\pi \in \Pi(\mu_{d},\nu_{d})} \int \|x-y \|_{d} \pi(dx dy).
\end{equation}
It is a well-known fact in optimal transportation that this problem has a minimizer of the form $(Id,T_{d})_{\sharp}\mu_{d}$ 
with $T_{d}$ invertible and Borel. 
For each $d$ we choose as optimal map $T_{d}$ the one obtained gluing the monotone rearrangements over the geodesics.

%

%


\begin{proposition}\label{P:limiteottimo}
Let $\pi_{d} \in \Pi(\mu_{d},\nu_{d})$ be an optimal transference plan for \eqref{E:traspapprox} 
such that $\pi_{d} \weak \pi$. 
Then $\pi \in \Pi(\mu,\nu)$ is an optimal transport plan for \eqref{E:trasp}.
\end{proposition}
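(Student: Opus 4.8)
The plan is to prove the two assertions in turn --- admissibility $\pi \in \Pi(\mu,\nu)$ and optimality --- and the whole scheme rests on one observation I would record first: since $\{e_i\}$ is orthonormal in $H(\gamma)$, for every $z \in X$ one has $\|P_d z\|_{H(\gamma)}^2 = \sum_{i=1}^d \langle \hat e_i, z\rangle^2$, so the Euclidean cost $\|\cdot\|_d$ in \eqref{E:traspapprox} is precisely the restriction of $\|\cdot\|_{H(\gamma)}$ to $P_dH$, and moreover $\|P_d z\|_{H(\gamma)} \nearrow \|z\|_{H(\gamma)}$ as $d \nearrow +\infty$ by monotone convergence of the partial sums. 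I would use these two facts repeatedly.

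For admissibility, I would pass the marginals to the limit. Since $P_1$ is continuous, $(x,y) \mapsto \varphi(x)$ belongs to $C_b(X \times X)$ for every $\varphi \in C_b(X,\erre)$, and testing $\pi_d \weak \pi$ against it together with $(P_1)_\sharp \pi_d = \mu_d \weak \mu$ gives $(P_1)_\sharp \pi = \mu$; the symmetric argument with $P_2$ and $\nu_d \weak \nu$ gives $(P_2)_\sharp \pi = \nu$, so $\pi \in \Pi(\mu,\nu)$.

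For optimality I would combine a lower-semicontinuity bound with a projection comparison. First, since $c(x,y) = \|x-y\|_{H(\gamma)}$ is nonnegative and lower semicontinuous (property (1) of Section \ref{Ss:Wiener}), the functional $\sigma \mapsto \int c\, \sigma$ is lower semicontinuous for narrow convergence; as each $\pi_d$ is concentrated on $P_dH \times P_dH$, where the two norms agree, $\int c\, \pi_d$ equals the finite-dimensional cost $\mathcal{I}_d(\pi_d)$, so $\pi_d \weak \pi$ yields $\mathcal{I}(\pi) \le \liminf_d \mathcal{I}_d(\pi_d)$. Next, for an arbitrary competitor $\hat\pi \in \Pi(\mu,\nu)$, I would push it forward to $\hat\pi_d := (P_d \times P_d)_\sharp \hat\pi$; one checks $(P_1)_\sharp\hat\pi_d = (P_d)_\sharp \mu = \mu_d$ and likewise $(P_2)_\sharp\hat\pi_d = \nu_d$, so $\hat\pi_d \in \Pi(\mu_d,\nu_d)$ is admissible for \eqref{E:traspapprox}. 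By optimality of $\pi_d$ and the bound $\|P_d(x-y)\|_{H(\gamma)} \le \|x-y\|_{H(\gamma)}$,
\[
\mathcal{I}_d(\pi_d) \le \int \|P_d(x-y)\|_{H(\gamma)}\, \hat\pi(dxdy) \le \mathcal{I}(\hat\pi)
\]
for every $d$. Taking the $\liminf$ in $d$ and chaining with the lower bound gives $\mathcal{I}(\pi) \le \mathcal{I}(\hat\pi)$ for all $\hat\pi$, i.e. $\pi$ is optimal for \eqref{E:trasp}.

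I expect no serious obstacle: this is the classical stability-of-optimality scheme. The two points needing care are the identification of the finite-dimensional Euclidean cost with the restriction of $\|\cdot\|_{H(\gamma)}$ (so that the two sides of the comparison are comparable quantities) and the monotonicity $\|P_d z\|_{H(\gamma)} \le \|z\|_{H(\gamma)}$, which keeps the projected competitors $\hat\pi_d$ no more expensive than $\hat\pi$; the case $\mathcal{I}(\hat\pi) = +\infty$ for all $\hat\pi$ is trivial. I would also confirm that the assumed $\pi_d \weak \pi$ is narrow convergence, so that $\pi$ is a probability measure and the lower-semicontinuity statement for $\int c\,\sigma$ applies.
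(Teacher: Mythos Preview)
Your proof is correct and follows essentially the same route as the paper: push forward a competitor $\hat\pi$ through $P_d\times P_d$, use the contraction $\|P_d(x-y)\|_{H(\gamma)}\le\|x-y\|_{H(\gamma)}$ together with optimality of $\pi_d$, and conclude by lower semicontinuity of the cost under $\pi_d\weak\pi$. The only differences are cosmetic: you compare against an arbitrary $\hat\pi$ rather than directly against an optimal one, and you spell out the admissibility $\pi\in\Pi(\mu,\nu)$ and the identification of $\|\cdot\|_d$ with the restricted Cameron--Martin norm, which the paper leaves implicit.
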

\begin{proof}
Let $\hat \pi \in \Pi(\mu,\nu)$ be an optimal transference plan. The following holds true
\begin{align*}
\int \|x-y \|_{H} \hat \pi(dxdy) \geq &~ \int \|P_{d}(x-y) \|_{H(\gamma)} \hat \pi(dxdy) = \int \|x-y \|_{H(\gamma)} ((P_{d}\otimes P_{d})_{\sharp}\hat \pi)(dxdy)  \crcr
 \geq &~ \int \|x-y \|_{H(\gamma)} \pi_{d}(dxdy).
\end{align*}
Since $\norm_{H(\gamma)}$ is l.s.c. and $\pi_{d} \weak \pi$ it follows that 
\[
\int \|x-y \|_{H(\gamma)} \hat \pi(dxdy) \geq \liminf_{d \to + \infty} \int \|x-y \|_{H(\gamma)} \pi_{d}(dxdy) \geq \int \|x-y \|_{H(\gamma)}\pi(dxdy).
\]
Hence the claim follows.
\end{proof}

If the sequence $\pi_{d} \in \Pi(\mu_{d},\nu_{d})$ of optimal transference plans
satisfies, for every $d \in \enne$, Assumption \ref{A:NDEatom} w.r.t. $\Gamma_{d} = \gr(T_{d})$  with $C$ independent on $d$,
then the optimal transference plan $\pi$, weak limit of $\pi_{d}$, satisfies Assumption \ref{A:NDEatom} w.r.t. a $\norm_{H(\gamma)}$-cyclically monotone set $\Gamma$ such that $\pi(\Gamma)=1$.


\begin{theorem}\label{T:approx}
Assume that there exists $C > 0$ such that for all $d \in \enne$ and for all $A \subset X$ compact set 
the following holds true
\[
\gamma_{d} \big(  T_{t}( \Gamma_{d} \cap A \times X) \big) \geq C \mu_{d}( A).
\]
Then for all $A \subset X$ compact set
\begin{equation}\label{E:stima}
\gamma \big( T_{t}( \Gamma \cap A \times X)\big) \geq C \mu( A ),
\end{equation}
where $\Gamma \subset X \times X$ is $\norm_{H(\gamma)}$-cyclically monotone with $\pi(\Gamma)=1$.
\end{theorem}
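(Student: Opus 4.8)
The plan is to fix a compact set $A\subset X$ and the time $t$, and to obtain the lower bound $\gamma\big(T_t(\Gamma\cap A\times X)\big)\ge C\mu(A)$ by passing to the limit $d\to+\infty$ in the finite dimensional estimates. Writing $E:=T_t(\Gamma\cap A\times X)$ and $E_d:=T_t(\Gamma_d\cap A_d\times X)$ for suitable compact sets $A_d$, the argument splits into two controls: a lower bound on the transported mass, and an upper semicontinuity for the Gaussian measures of the evolved sets, $\gamma(E)\ge\limsup_d\gamma_d(E_d)$. Combined with the hypothesis $\gamma_d(E_d)\ge C\mu_d(A_d)$ these yield the conclusion. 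For the mass the clean choice is $A_d:=P_d(A)$, which is $\|\cdot\|$-compact since $P_d$ is continuous; then $A\subseteq P_d^{-1}(P_d(A))$ gives $\mu_d(P_d(A))=\mu\big(P_d^{-1}(P_d(A))\big)\ge\mu(A)$ for every $d$, with no limiting argument needed on the first marginal.

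To gain compactness I would truncate the transport distance. Let $L:=\int\|x-y\|_{H(\gamma)}\,\hat\pi(dxdy)<+\infty$ for an optimal infinite dimensional plan $\hat\pi$; the proof of Proposition \ref{P:limiteottimo} shows $\int\|x-y\|_{H(\gamma)}\,\pi_d(dxdy)\le L$ uniformly in $d$, so by Chebyshev the compact truncated sets $\tilde A_d:=P_d(A)\cap\{x:\|x-T_d(x)\|_d\le M\}$ satisfy $\mu_d(\tilde A_d)\ge\mu(A)-L/M$, again uniformly in $d$. Applying the hypothesis to $\tilde A_d$ gives $\gamma_d\big(T_t(\Gamma_d\cap\tilde A_d\times X)\big)\ge C\big(\mu(A)-L/M\big)$, and now every evolved point is at $\|\cdot\|_{H(\gamma)}$-distance at most $M$ from a point of the compact set $A$. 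Since the embedding $(H(\gamma),\|\cdot\|_{H(\gamma)})\hookrightarrow(X,\|\cdot\|)$ is compact, the ball $\bar B_{H(\gamma)}(0,M)$ is $\|\cdot\|$-compact, so all these truncated evolved sets $E_d$ lie in one fixed $\|\cdot\|$-compact set $\mathcal K_M\subset X$.

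The core is then the semicontinuity inside $\mathcal K_M$. I would take $\Gamma$ to be the Kuratowski upper limit of the $\Gamma_d$; it carries $\pi$ because $\supp\pi\subseteq\liminf_d\supp\pi_d$ for the weakly convergent $\pi_d\weak\pi$, and it is $\|\cdot\|_{H(\gamma)}$-cyclically monotone because $\pi$ is optimal for \eqref{E:trasp}. Setting $\Gamma^{(M)}:=\Gamma\cap\{\|x-y\|_{H(\gamma)}\le M\}$, which is compact in $\mathcal K_M$, the decisive inclusion is $\limsup_d E_d\subseteq E^{(M)}:=T_t(\Gamma^{(M)}\cap A\times X)$: if $z_d=(1-t)x_d+ty_d\to z$ with $(x_d,y_d)\in\Gamma_d$, $x_d\in\tilde A_d$, then by $\|\cdot\|$-compactness $x_d\to x\in A$, $y_d\to y$, the limit $(x,y)$ lies in $\Gamma^{(M)}$, and $z=(1-t)x+ty\in E^{(M)}$. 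Since $E^{(M)}$ is compact, for each $\ve>0$ the closed $\ve$-neighborhood $(E^{(M)})^\ve$ contains $E_d$ for all large $d$, and the portmanteau inequality for $\gamma_d\weak\gamma$ on the closed set $(E^{(M)})^\ve$ gives
\[
\gamma\big((E^{(M)})^\ve\big)\ \ge\ \limsup_d \gamma_d(E_d)\ \ge\ C\big(\mu(A)-L/M\big).
\]
Letting $\ve\downarrow 0$ yields $\gamma(E^{(M)})\ge C(\mu(A)-L/M)$, and since $E^{(M)}\nearrow E$ as $M\nearrow+\infty$ (because $\Gamma\subseteq\{\|x-y\|_{H(\gamma)}<+\infty\}$) we conclude $\gamma(E)\ge C\mu(A)$.

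The main obstacle I expect is the construction of the limiting set $\Gamma$, that is, reconciling three requirements at once: $\Gamma$ must carry $\pi$, be $\|\cdot\|_{H(\gamma)}$-cyclically monotone, and be large enough that $\limsup_d E_d\subseteq E$. The analytic difficulty is that the finite dimensional cost $\|x-y\|_d=\|P_d(x-y)\|_{H(\gamma)}$ is only lower semicontinuous, and not continuous, with respect to $\|\cdot\|$-convergence, so a $\|\cdot\|$-limit of points of $\Gamma_d$ need not inherit cyclical monotonicity nor remain in $\supp\pi$, and a priori Gaussian mass of the evolved sets could escape into modes of order larger than $d$. This is exactly where the truncation to bounded $\|\cdot\|_{H(\gamma)}$-distance and the compactness of the embedding $H(\gamma)\hookrightarrow X$ are essential: they confine all relevant points to the fixed compact $\mathcal K_M$ and make the Kuratowski limit an admissible cyclically monotone set carrying $\pi$, so that the evolved finite dimensional sets cannot lose mass in the limit.
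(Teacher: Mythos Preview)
Your high-level plan matches the paper's, but there is a genuine gap in the compactness step that breaks the argument, and it stems from the choice $A_d:=P_d(A)$. You write that ``every evolved point is at $\|\cdot\|_{H(\gamma)}$-distance at most $M$ from a point of the compact set $A$'' and later that ``$x_d\to x\in A$''; neither is justified. A point $z_d\in E_d$ has the form $(1-t)x_d+ty_d$ with $x_d\in\tilde A_d\subset P_d(A)$ and $\|x_d-y_d\|_{H(\gamma)}\le M$, so $z_d$ is within $H(\gamma)$-distance $tM$ of $P_d(A)$, \emph{not} of $A$. In an abstract Wiener space the projections $P_d$ need not be uniformly bounded on $X$ and $P_d a\to a$ holds only $\gamma$-a.e., so for a general compact $A$ the set $\bigcup_d P_d(A)$ is not precompact in $X$. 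Hence the $E_d$ are not trapped in any fixed compact $\mathcal K_M$, the portmanteau step has no anchor, and even when $x_{d_k}$ converges its limit lies in $\overline{\bigcup_d P_d(A)}$ rather than in $A$, so the key inclusion $\limsup_d E_d\subseteq E^{(M)}=T_t(\Gamma^{(M)}\cap A\times X)$ fails. Your Chebyshev truncation only controls $y_d-x_d$; it gives no control on $x_d$ itself.

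The paper closes exactly this gap with two devices you are missing. First, instead of truncating by transport distance it uses tightness of the weakly convergent families $\{\mu_d\}$, $\{\nu_d\}$ to obtain fixed compacts $K_{1,\ve},K_{2,\ve}\subset X$ with $\pi_d(K_{1,\ve}\times K_{2,\ve})\ge 1-\ve$ uniformly in $d$; restricting to $\Gamma_{d,\ve}:=\Gamma_d\cap K_{1,\ve}\times K_{2,\ve}$ confines everything to one compact set and permits Hausdorff convergence to a compact $\Gamma_\ve$ with $\pi(\Gamma_\ve)\ge 1-\ve$. Second, it does not attack an arbitrary compact $A$ directly: it first proves the estimate for cylinder sets $A=C(B)$ with finite-dimensional compact base, for which $\mu_d(A)=\mu(A)$ as soon as $d$ exceeds the dimension of $B$ (so no projection of $A$ is needed and the base points stay in $A$ automatically), and only then extends to all closed $A$ by approximating from outside with cylinders, using stability of the estimate under intersections. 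These two reductions, rather than the construction of the limiting $\Gamma$ that you flag as the main obstacle, are the missing ideas.
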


\begin{proof}
Assume that $A = C(B) \in \mathcal{E}(X)$ with $B \in \erre^{n}$ compact set for some fixed $n \in \enne$.
It follows from Proposition \ref{P:limiteottimo} 
that $\pi$ is an optimal transference plan concentrated on a $\norm_{H(\gamma)}$-cyclically monotone set $\Gamma$.
Observe that $\Gamma_{d}$ is closed for every $d \in \enne$ .

{\it Step 1.}
Since $\mu_{d} \weak \mu$ and $\nu_{d} \weak \nu$, for every $\ve >0$ there exist $K_{1,\ve}$ and $K_{2,\ve}$ compact sets such that 
$\mu_{d}(K_{1,\ve}) \geq 1- \ve/2$ and  $\nu_{d}(K_{2,\ve}) \geq 1- \ve/2$. 
Consider the compact se $\Gamma_{d,\ve} := \Gamma_{d} \cap K_{1,\ve} \times K_{2,\ve}$, 
then $\pi_{d}(\Gamma_{d,\ve}) \geq 1-\ve$ and $\Gamma_{d,\ve}$ converges in the Hausdorff topology, 
up to subsequences for $d \to + \infty$, to a set $\Gamma_{\ve} \subset \Gamma$ with $\pi(\Gamma_{\ve})\geq 1-\ve$. 

Since $T_{t}$ is continuous, $T_{t}(\Gamma_{d,\ve} \cap A \times X)$ is compact and by 
\[
T_{t}(\Gamma_{d,\ve} \cap A\times X) \subset \overline{co} (P_{1}(K_{\ve} \cap A\times X) \cap P_{2}(K_{\ve} \cap A\times X))
\]
it follows that $T_{t}(\Gamma_{d,\ve} \cap A\times X)$ converges in the Hausdorff topology to $T_{t}(\Gamma_{\ve} \cap A\times X)$.

{\it Step 2.}
It follows that
\[
\gamma(T_{t}(\Gamma_{\ve} \cap A \times X)) \geq  \limsup_{d \to + \infty}  \gamma_{d}(T_{t}(\Gamma_{d,\ve} \cap A \times X)),
\]
hence, using the fact that $\Gamma_{d,\ve}$ is a subset of a graph, it follows that
\begin{align}\label{E:bene}
 \gamma(T_{t}(\Gamma_{\ve} \cap A \times X)) \geq &~ \limsup_{d \to + \infty} \gamma_{d}(T_{t}(\Gamma_{d,\ve} \cap A \times X))\crcr
\geq &~ C \limsup_{d \to + \infty} \mu_{d} ( P_{1}(\Gamma_{d,\ve}) \cap A) \crcr
\geq &~ C \limsup_{d \to + \infty} \mu_{d} (A) - C \ve
\end{align}
where in the last equation we have used $\mu_{d} (P_{1}(\Gamma_{d,\ve})) \geq 1-\ve$.

%

{\it Step 3.} Since $\mu_{d}= P_{d\,\sharp} \mu$
\begin{equation}\label{E:bbene}
\gamma(T_{t}(\Gamma_{\ve} \cap A \times X)) \geq C \limsup_{d \to + \infty} \mu_{d} (A) - C \ve = 
C\mu(A) - C\ve.
\end{equation}
for all $A \in \mathcal{E}(X)$ with finite dimensional base; note that this family of sets is a base for the weak topology. 
It follows from the compactness of $\Gamma_{\ve}$ that \eqref{E:bbene} is stable under uncountable intersection, 
hence \eqref{E:bbene} holds true for all weak closed subset of $\Gamma_{\ve}$. Since $\Gamma_{\ve}$ is a compact set, 
weak topology has the same closed set of the strong one. It follows that for all closed set $A \subset X$ 
\[
\gamma(T_{t}(\Gamma \cap A \times X)) \geq \gamma(T_{t}(\Gamma_{\ve} \cap A \times X)) \geq C \mu ( A) - C \ve.
\]
Passing to the limit as $\ve \to 0$, the claim follows.
\end{proof}

\section{Finite dimensional estimate}
\label{S:finite}
Throughout this Section we will use the following notation $Jac(F)(x) : = |\det dF|(x)$ for any map $F:\erre^{d} \to \erre^{d}$.

In the next theorem we prove that $d$-dimensional standard Gaussian measure $\gamma_{d} =  P_{d\,\sharp} \gamma$ satisfies 
Assumption \ref{A:NDEatom} for $\Gamma =  \gr(T_{d})=\Gamma_{d}$:
\[
\gamma_{d} \big(  T_{t}( \Gamma_{d} \cap A \times X) \big) \geq C \mu_{d}( A ).
\]
Observe that the set $T_{t}( \Gamma_{d} \cap A \times X)$ is parametrized by the map $T_{d,t}:= Id(1-t) + T_{d} t$.
  
\begin{theorem}\label{T:stimapprox}
Assume that there exists $M>0$ such that 
 $\rho_{i,d} (x) \leq C$ for $\gamma_{d}$-a.e. $x \in \erre^{d}$ and $i=1,2$. 
Then the following estimate holds true 
\[
\gamma_{d} \big(  T_{d,t}( A ) \big) \geq \frac{1}{C} \mu_{d}(A ).
\]
\end{theorem}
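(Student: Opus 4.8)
The plan is to bound from above the density of the pushed-forward measure $(T_{d,t})_\sharp(\mu_d\llcorner_A)$ with respect to $\gamma_d$ and then integrate. Write $\gamma_d = g_d\,\mathcal L^d$ with $g_d(x) = (2\pi)^{-d/2}\exp(-|x|^2/2)$ and $\mu_d = \rho_{1,d}\gamma_d$. First I would record the regularity we need: $T_d$ is approximately differentiable $\mathcal L^d$-a.e.\ on $\{\rho_{1,d}>0\}$, the map $T_{d,t}=(1-t)\mathrm{Id}+tT_d$ is injective on the transport set for $t\in[0,1)$ (transport rays meet only at endpoints, and on each ray the interpolation is strictly monotone), and $\tilde\nabla T_d(x)$ is diagonalizable with nonnegative real eigenvalues $\lambda_1(x),\dots,\lambda_d(x)$ — the latter being the structural regularity of the monotone rearrangement, the analogue for the distance cost of Theorem \ref{T:different}. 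Then Lemma \ref{L:density}, applied to $f=T_{d,t}$, gives that $(T_{d,t})_\sharp(\mu_d\llcorner_A)$ has $\gamma_d$-density
\[
\sigma_t(z) = \rho_{1,d}(x)\,\frac{g_d(x)}{|\det\tilde\nabla T_{d,t}(x)|\,g_d(z)},\qquad z = T_{d,t}(x),
\]
where $\tilde\nabla T_{d,t}(x) = (1-t)\mathbb{I} + t\,\tilde\nabla T_d(x)$ has eigenvalues $(1-t)+t\lambda_i(x)>0$ for $t\in[0,1)$.

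Next I would prove the pointwise bound $\sigma_t(z)\le C$. Fix $x$, set $y=T_d(x)$, $z_t=(1-t)x+ty$, and consider
\[
F(t) := \log\big(|\det\tilde\nabla T_{d,t}(x)|\,g_d(z_t)\big) = \sum_{i=1}^d\log\big((1-t)+t\lambda_i(x)\big) - \tfrac12|z_t|^2 - \tfrac d2\log(2\pi).
\]
Each summand $\log((1-t)+t\lambda_i)$ is concave in $t$ (second derivative $-(\lambda_i-1)^2/((1-t)+t\lambda_i)^2\le 0$; here it is essential that the eigenvalues be \emph{real} and nonnegative, so that $\det\tilde\nabla T_{d,t}$ factors into this product), while $t\mapsto -\tfrac12|z_t|^2$ is concave because $z_t$ is affine and $|\cdot|^2$ is convex — this is exactly where the log-concavity of the Gaussian density enters. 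Hence $F$ is concave on $[0,1]$. Since $F(0)=\log g_d(x)$ and $F(1)=\log\big(|\det\tilde\nabla T_d(x)|g_d(y)\big)$, the two identities $\sigma_t(z)=\rho_{1,d}(x)\,e^{F(0)-F(t)}$ and $\sigma_t(z)=\rho_{2,d}(y)\,e^{F(1)-F(t)}$ (the second obtained from $(T_d)_\sharp\mu_d=\nu_d=\rho_{2,d}\gamma_d$, i.e.\ $\sigma_1=\rho_{2,d}$) give
\[
\sigma_t(z)\le C\,e^{\min(F(0),F(1))-F(t)}.
\]
By concavity $F(t)\ge (1-t)F(0)+tF(1)\ge\min(F(0),F(1))$ for $t\in[0,1]$, so the exponent is nonpositive and $\sigma_t(z)\le C$.

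Finally, since $(T_{d,t})_\sharp(\mu_d\llcorner_A)$ has total mass $\mu_d(A)$, is concentrated on $T_{d,t}(A)$, and has $\gamma_d$-density at most $C$,
\[
\mu_d(A)=\int_{T_{d,t}(A)}\sigma_t\,d\gamma_d\le C\,\gamma_d\big(T_{d,t}(A)\big),
\]
which is the asserted estimate $\gamma_d(T_{d,t}(A))\ge \tfrac1C\mu_d(A)$. I expect the main obstacle to be the structural regularity invoked in the first paragraph: establishing that the distance-cost map $T_d$ obtained by gluing one-dimensional monotone rearrangements over the rays is approximately differentiable a.e.\ with $\tilde\nabla T_d$ diagonalizable and with \emph{real nonnegative} eigenvalues (so that the factorization of $\det\tilde\nabla T_{d,t}$ and hence the concavity of $\log\det$ are legitimate), together with the injectivity of $T_{d,t}$ required to apply Lemma \ref{L:density}. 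Once that regularity is granted, the remainder — the concavity of $F$, in which the Gaussian enters only through the convexity of $t\mapsto|z_t|^2$, and the bookkeeping with the two endpoint densities — is elementary and uniform in $d$.
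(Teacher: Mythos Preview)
Your concavity computation is essentially the paper's Steps 2--3 repackaged: you bundle the log-Jacobian and the log-Gaussian into a single concave function $F(t)$, while the paper treats them separately (first $\log Jac(T_{p,t})\ge t\,\log Jac(T_p)$ from concavity of $\log$ on each eigenvalue factor, then explicit algebra with the Gaussian exponents yielding the same displacement-convexity inequality). The resulting pointwise bound and its uniformity in $d$ are identical.

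The gap you flag at the end is real, and it is precisely the one the paper does \emph{not} close for $T_d$. Theorem~\ref{T:different} supplies approximate differentiability and the diagonalizable-with-nonnegative-eigenvalues structure only for the optimal map of the cost $c_p(x,y)=|x-y|^p$ with $p>1$; no analogue for the distance cost $p=1$ is invoked or proved in the paper, and the ray-by-ray construction of $T_d$ does not by itself give it. The paper's device is to run the Jacobian computation with $T_p$ in place of $T_d$ (this is its Step~1), obtain $\gamma_d(T_{p,t}(A))\ge C^{-1}\mu_d(A)$ with a constant independent of $p$, and then let $p\searrow 1$ (Step~4): since $(Id,T_p)_\sharp\mu_d\rightharpoonup(Id,T_d)_\sharp\mu_d$, the estimate survives by the same Hausdorff-convergence argument used in the proof of Theorem~\ref{T:approx}. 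So the missing ingredient in your route is this $p\searrow 1$ approximation; once you insert it, your argument and the paper's coincide.
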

 
\begin{proof} During the proof we will omit the subscript $d$.

{\it Step 1.} Consider  the Monge minimization problem with cost $c_{p}$, \eqref{E:monge},
between $\mu_{d}$ and $\nu_{d}$. It follows from Theorem \ref{T:different} and the boundedness of $\rho_{i,d}$ that 
there exists a unique  optimal map $T_{p}$ approximately differentiable $\mu_{d}$-a.e., hence
by Lemma \ref{L:density} it follows that
\[
\rho_{2}(T_{p}(x)) |\det \tilde \nabla T_{p}|(x)\prod_{j=1}^{d}\frac{1}{\sqrt{2\pi}} \exp\Big\{ -  \frac{T_{p}(x)_{j}^{2}}{2} \Big\} = 
\rho_{1}(x) \prod_{j=1}^{d}\frac{1}{\sqrt{2\pi}} \exp\Big\{ -  \frac{x_{j}^{2}}{2} \Big\}.
\]
Since for $\mu_{d}$-a.e. $x \in\erre^{d}$ $|\det \tilde \nabla T_{p}|(x)>0$, also $\rho_{2}(T_{p}(x))>0$ for $\mu_{d}$-a.e. $x \in \erre^{d}$. 
Hence the following makes sense $\mu$-a.e.:
\[
Jac(T_{p})(x)= |\det \tilde \nabla T_{p}|(x) = \frac{\rho_{1}(x)}{\rho_{2}(T_{p}(x))} \exp \Big\{ \sum_{j=1}^{d} - \frac{1}{2} ( x_{j}^{2} - T_{p}(x)_{j}^{2} )  \Big\}.
\]

{\it Step 2.} Let $T_{p,t} : = Id (1-t) + T_{p} t$.
From Theorem \ref{T:different}, $\det \tilde \nabla T_{p}(x) = \prod_{j=1}^{d}\lambda_{j}$ with $\lambda_{i}>0$ for $i=1,\dots,d$.
It follows that
\[
Jac(T_{p,t})(x) = \det (Id (1-t) + \tilde\nabla T_{p}(x)t) = \prod_{j=1}^{d}\big( (1-t ) + \lambda_{j}t \big).
\]
Passing to logarithms, we have by concavity 
\[
\log (Jac(T_{p,t})(x) ) \geq t \log (Jac(T_{p})(x) ) \Longrightarrow Jac(T_{p,t})(x) \geq  (Jac(T_{p})(x))^{t}.
\]
Hence
\begin{equation}
Jac(T_{p,t})(x) \geq \Big(\frac{\rho_{1}(x)}{\rho_{2}(T_{p}(x))}\Big)^{t} \exp \Big\{ \sum_{j=1}^{d} - \frac{1}{2} t ( x_{j}^{2} - T_{p}(x)_{j}^{2} )  \Big\}.
\end{equation}

{\it Step 3.} 
We have the following
\begin{align*}
 \exp\Big\{  \sum_{j=1}^{d}- &~\frac{1}{2} (T_{p,t}(x)_{j}^{2}  - x_{j}^{2}) \Big\} Jac (T_{p,t})(x)  \crcr
&~ \geq  \exp\Big\{  \sum_{j=1}^{d}- \frac{1}{2} (T_{p,t}(x)_{j}^{2}  - x_{j}^{2}) \Big\} 
 \Big(\frac{\rho_{1}(x)}{\rho_{2}(T_{p}(x))}\Big)^{t} \exp \Big\{ \sum_{j=1}^{d} - \frac{1}{2} t ( x_{j}^{2} - T_{p}(x)_{j}^{2} )  \Big\} \crcr
&~ =  \Big(\frac{\rho_{1}(x)}{\rho_{2}(T_{p}(x))}\Big)^{t}  
\exp\Big\{  \sum_{j=1}^{d}- \frac{1}{2} (T_{p,t}(x)_{j}^{2}  - x_{j}^{2}   +t x_{j}^{2} - t T_{p}(x)_{j}^{2}    ) \Big\} \crcr
&~ =  \Big(\frac{\rho_{1}(x)}{\rho_{2}(T_{p}(x))}\Big)^{t}  
\exp\Big\{  \sum_{j=1}^{d}- \frac{1}{2} \Big( ((1-t)x_{j} + t T_{p}(x)_{j})^{2}  - ( (1-t) x_{j}^{2}   + t T_{p}(x)_{j}^{2}    \Big) \Big\} \crcr
&~ =  \Big(\frac{\rho_{1}(x)}{\rho_{2}(T_{p}(x))}\Big)^{t}  
\exp\Big\{  \sum_{j=1}^{d}- \frac{1}{2} \big( x_{j} -T_{p}(x)_{j} \big)^{2} (t^{2}-t) \Big\}  \crcr
&~ =  \Big(\frac{\rho_{1}(x)}{\rho_{2}(T_{p}(x))}\Big)^{t}  
\exp\Big\{  - \frac{1}{2} \| x -T_{p}(x) \|_{d}^{2} (t^{2}-t) \Big\}. 
\end{align*}

Hence
\begin{align*}
\gamma(T_{p,t}(A)) = &~ \int_{A}  Jac(T_{p,t})(x)  \prod_{j=1}^{d}\frac{1}{\sqrt{2\pi}} \exp\Big\{ - \frac{1}{2} T_{p,t}(x)_{j}^{2} \Big\} \mathcal{L}^{d}(dx) \crcr
= &~ \int_{A}  Jac(T_{p,t})(x)  \exp\Big\{ \sum_{j=1}^{d} - \frac{1}{2} ( T_{p,t}(x)_{j}^{2} - x_{j}^{2} ) \Big\} \gamma(dx) \crcr
\geq &~ \int_{A} \Big(\frac{\rho_{1}(x)}{\rho_{2}(T_{p}(x))}\Big)^{t}  \exp\Big\{  \frac{1}{2} \| x -T_{p}(x) \|_{d}^{2} (t- t^{2}) \Big\} \gamma(dx) \crcr
\geq &~ \frac{1}{C^{t}}\int_{A} \rho_{1}(x)^{t} \gamma(dx)  \crcr
\geq &~ \frac{1}{C^{t}}\int_{A}\rho_{1}(x)^{t-1} \mu(dx)  \crcr
\geq &~ \frac{1}{C}\mu(A).
\end{align*}
The claim follows.

{\it Step 4.} Since $(Id,T_{p})_{\sharp}\mu_{d} \weak (Id,T)_{\sharp} \mu_{d}$ as $p \searrow 1$, using the same techniques of Theorem \ref{T:approx}'s proof,
it is fairly easy to prove that 
\[
\gamma_{d}(T_{t}(A)) \geq \frac{1}{C} \mu_{d}( A).
\]
\end{proof}

\begin{remark}\label{R:resume}
We summarize the results obtained so far. If $\rho_{1},\rho_{2} \leq M$, 
then from \eqref{E:densita} it follows that the densities of $\mu_{d}$ and $\nu_{d}$ enjoy the same property with the same constant $M$.				
Hence using the approximating sequences $\mu_{d}$ and 
$\nu_{d}$, we have from Theorem \ref{T:approx} and Theorem \ref{T:stimapprox} that  
\[
\gamma \big( T_{t}( \Gamma \cap A \times X)\big) \geq \frac{1}{M} \mu( A). 
\]
As Proposition \ref{P:puntini} and Proposition \ref{P:nonatoms} show, this estimate implies $\mu(a(\mathcal{T}))=0$ and the continuity of the conditional 
probabilities $\mu_{y}$.
Since the optimal finite dimensional map $T_{d}$ is invertible, following the argument of Theorem \ref{T:stimapprox} we can also prove
\begin{equation}\label{E:ricapitolo}
\gamma \big( T_{1-t}( \Gamma \cap X \times A)\big) \geq \frac{1}{M} \nu( A),
\end{equation}
and adapting the proofs of Proposition \ref{P:puntini} and Proposition \ref{P:nonatoms} we can prove that  $\nu(b(\mathcal{T}))=0$ and 
the continuity of the conditional probabilities $\nu_{y}$. So we have
\[
\mu = \int_{\mathcal{S}} \mu_{y} m (dy), \quad \nu = \int_{\mathcal{S}} \nu_{y} m (dy), \qquad \mu_{y}, \nu_{y} \ \textrm{continuous for } m-a.e. y \in \mathcal{S}. 
\]


In the next Section we remove the hypothesis $\rho_{1},\rho_{2} \leq M$.
\end{remark}

\section{Solution}
\label{S:solu}

In this Section we obtain the existence of an optimal transport map for the Monge minimization problem between measures $\mu$ and $\nu$ 
absolute continuous w.r.t. $\gamma$. We first prove that the set of initial points and the set of final points have $\mu$-measure zero and $\nu$-measure zero respectively.
Recall that the initial points map $a$ and the final points map $b$ have been introduced in Definition \ref{D:endpoint}. 

\begin{proposition}\label{P:noiniz}
Let $\mu, \nu \in \mathcal{P}(X)$ be such that $\mu,\nu \ll \gamma$. Then $\mu(a(\mathcal{T}))= \nu(b(\mathcal{T}))=0$. 
\end{proposition}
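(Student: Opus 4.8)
The plan is to reduce the statement to the bounded-density case already settled in Remark \ref{R:resume}, by splitting the optimal plan according to the size of the two densities. Fix an optimal $\pi \in \Pi(\mu,\nu)$ concentrated on the $\|\cdot\|_{H(\gamma)}$-cyclically monotone set $\Gamma$ used in Section \ref{S:Optimal} to build $G$, $\mathcal{T}$, $a$ and $b$; by Proposition \ref{P:ortho} we may assume $\pi$ coincides with the identity on $\mu \wedge \nu$. Writing $\mu = \rho_{1}\gamma$ and $\nu = \rho_{2}\gamma$, I set for $n,m \in \enne$
\[
E^{1}_{n} := \{ 2^{n-1} < \rho_{1} \leq 2^{n} \}, \quad E^{2}_{m} := \{ 2^{m-1} < \rho_{2} \leq 2^{m} \}, \quad \pi_{n,m} := \pi \llcorner_{E^{1}_{n} \times E^{2}_{m}}.
\]
Then $\pi = \sum_{n,m} \pi_{n,m}$, and putting $\mu^{n,m} := (P_{1})_{\sharp}\pi_{n,m}$, $\nu^{n,m} := (P_{2})_{\sharp}\pi_{n,m}$ one has $\mu = \sum_{n,m}\mu^{n,m}$ and $\nu = \sum_{n,m}\nu^{n,m}$. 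Hence it suffices to prove $\mu^{n,m}(a(\mathcal{T})) = 0$ and $\nu^{n,m}(b(\mathcal{T})) = 0$ for every $n,m$.

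First I would check that each piece is itself a bounded-density optimal problem. Fix $n,m$ with $c_{n,m} := \pi_{n,m}(X \times X) > 0$. The probability plan $\pi_{n,m}/c_{n,m}$ is concentrated on $\Gamma_{n,m} := \Gamma \cap (E^{1}_{n} \times E^{2}_{m}) \subset \Gamma$, hence it is $\|\cdot\|_{H(\gamma)}$-cyclically monotone and, by Theorem \ref{T:kanto}, optimal between its own marginals. Since $\mu^{n,m} \leq \mu \llcorner_{E^{1}_{n}}$ and $\nu^{n,m} \leq \nu \llcorner_{E^{2}_{m}}$, these marginals are absolutely continuous with respect to $\gamma$ with densities bounded by $2^{n}/c_{n,m}$ and $2^{m}/c_{n,m}$. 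Thus the pair $(\mu^{n,m}/c_{n,m},\nu^{n,m}/c_{n,m})$ satisfies the uniformity hypothesis of Remark \ref{R:resume}, and running the construction of Section \ref{S:Optimal} for the cyclically monotone set $\Gamma_{n,m}$ produces objects $G_{n,m}$, $\mathcal{T}_{n,m}$, $a_{n,m}$, $b_{n,m}$ for which Remark \ref{R:resume} gives $\mu^{n,m}(a_{n,m}(\mathcal{T}_{n,m})) = \nu^{n,m}(b_{n,m}(\mathcal{T}_{n,m})) = 0$.

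The remaining, and main, step is to transfer these conclusions from the sub-problem to the \emph{global} initial and final points. The key observation is that restricting the cyclically monotone set can only shrink the ray structure: since $\Gamma_{n,m} \subset \Gamma$, every zero-cost cycle admissible for $\Gamma_{n,m}$ in \eqref{E:gGamma} is admissible for $\Gamma$, so $\Gamma_{n,m}' \subset \Gamma'$ and hence $G_{n,m} \subset G$, giving $G_{n,m}^{-1}(x) \subset G^{-1}(x)$ and $G_{n,m}(x) \subset G(x)$ for every $x$. If $x \in a(\mathcal{T})$ then $G^{-1}(x) = \{x\}$, so $G_{n,m}^{-1}(x) = \{x\}$ as well; for $\mu^{n,m}$-a.e. such $x$ the plan $\pi_{n,m}$ sends $x$ to some $y \neq x$ (the fixed mass being the identity part removed through Proposition \ref{P:ortho}, which does not lie in $\mathcal{T}_{e}$ and hence contributes nothing to $a(\mathcal{T})$), so $x$ carries a nontrivial outgoing $G_{n,m}$-ray and therefore $x \in a_{n,m}(\mathcal{T}_{n,m})$. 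This yields $\mu^{n,m}(a(\mathcal{T})) \leq \mu^{n,m}(a_{n,m}(\mathcal{T}_{n,m})) = 0$, and symmetrically $\nu^{n,m}(b(\mathcal{T})) = 0$; summing over $n,m$ gives $\mu(a(\mathcal{T})) = \nu(b(\mathcal{T})) = 0$. I expect the delicate part to be exactly this comparison of endpoint sets: one must verify the monotonicity $\Gamma_{n,m} \subset \Gamma \Rightarrow G_{n,m} \subset G$ along the full chain $\Gamma \mapsto \Gamma' \mapsto G$, so that sources of the global problem stay sources of each sub-problem, and carefully dispose of the points fixed by $\pi_{n,m}$.
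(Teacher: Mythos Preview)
Your approach is genuinely different from the paper's, and it contains a gap at the point you yourself flag as delicate --- but the gap is not the comparison of endpoint sets; it is the invocation of Remark~\ref{R:resume}.

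The chain Theorem~\ref{T:stimapprox} $\to$ Theorem~\ref{T:approx} $\to$ Remark~\ref{R:resume} does not establish Assumption~\ref{A:NDEatom} for an \emph{arbitrary} cyclically monotone carrier of an optimal plan. Theorem~\ref{T:approx} produces the evolution estimate only for one particular $\hat\Gamma$, namely the set carrying the weak limit $\hat\pi$ of the finite-dimensional optimal plans $\pi_d$. When you apply this machinery to the pair $(\mu^{n,m}/c_{n,m},\nu^{n,m}/c_{n,m})$, what you obtain is $\mu^{n,m}\big(\hat a_{n,m}(\hat{\mathcal T}_{n,m})\big)=0$ for the objects $\hat G_{n,m},\hat a_{n,m},\hat{\mathcal T}_{n,m}$ built from this $\hat\Gamma_{n,m}$, not from your restricted set $\Gamma_{n,m}=\Gamma\cap(E^1_n\times E^2_m)$. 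Your monotonicity step $\Gamma_{n,m}\subset\Gamma\Rightarrow G_{n,m}\subset G$ is correct, but it is useless here: you would need $\hat\Gamma_{n,m}\subset\Gamma$ (so that $\hat G_{n,m}\subset G$), and there is no reason the limit of optimal maps between $P_{d\sharp}\mu^{n,m}$ and $P_{d\sharp}\nu^{n,m}$ lands inside the original $\Gamma$. Since for a norm cost the optimal plan is not unique, you cannot identify $\hat\pi^{n,m}$ with $\pi_{n,m}/c_{n,m}$ either.

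The paper avoids this difficulty by a different reduction. Instead of slicing $\pi$, it keeps the original ray structure fixed: assuming $\mu(\hat A)>0$ for some $\hat A\subset a(\mathcal T)$ with $\rho_1$ bounded on $\hat A$, it disintegrates $\gamma\llcorner_{\mathcal T}=\int\gamma_y\,m_\gamma(dy)$ along the original rays and manufactures a second marginal
\[
\hat\gamma:=\int_{R(\hat A)\cap\mathcal S} h(a(y))\,\gamma_y\,m_\gamma(dy),\qquad h:=\frac{d(\mu\llcorner_{\hat A})}{d(a_\sharp m_\gamma)},
\]
so that $\mu\llcorner_{\hat A}$ and $\hat\gamma$ have the \emph{same mass on each original ray} $R(y)$. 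Both densities are bounded, and by construction the original $\mathcal T_e$ remains a transport set for the new pair; the sub-problem lives on the same geodesics as the global one. This is exactly what makes the transfer of initial points legitimate and delivers the contradiction $\mu(\hat A)=0$. The idea you are missing is to tailor the second marginal to the existing ray geometry rather than to restrict the plan.
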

\begin{proof} Let $\mu = \rho_{1}\gamma$ and $\nu =\rho_{2}\gamma$.
We prove that $\mu(a(\mathcal{T}))= 0$.

{\it Step 1.} Assume by contradiction that $\mu(a(\mathcal{T}))>0$. 
Let $A \subset a(\mathcal{T})$ be such that $\mu(A)>0$ and for every $x \in A$,
$\rho_{1}(x)\leq M$ for some positive constant $M$.
Consider $\gamma\llcorner_{\mathcal{T}}$ and its disintegration 
\[
\gamma\llcorner_{\mathcal{T}} = \int_{\mathcal{S}} \gamma_{y} m_{\gamma}(dy), \quad \gamma_{y}(\mathcal{T})=1, \ m_{\gamma}-a.e. y \in \mathcal{S}.
\]
Consider the initial point map $a : \mathcal{S} \to A$ and the measure $a_{\sharp} m_{\gamma}$. Observe that since 
\[
\forall B \subset A: \mu(B)>0 \quad \Rightarrow \quad \gamma( R(B) \cap \mathcal{T})>0,
\]
it follows that $\mu\llcorner_{A} \ll a_{\sharp} m_{\gamma}$. Hence there exists $\hat A \subset A$ of positive $a_{\sharp}m_{\gamma}$-measure such that 
the map 
\[
\hat A\ni x \mapsto h(x) : = \frac{d \mu\llcorner_{A}}{d a_{\sharp}m_{\gamma}} (x)
\]
verifies $ h(x) \leq M'$ for some positive constan $M'$. 

{\it Step 2.}
Considering 
\[
\mu\llcorner_{\hat A}, \qquad \hat \gamma : = \int_{R(\hat A) \cap \mathcal{S}} h(a(y))\gamma_{y} m_{\gamma}(dy), 
\]
we have the claim. Indeed both have uniformly bounded densities w.r.t. $\gamma$ and $\mathcal{T}_{e}$ is still a transport set 
for the transport problem between $\mu\llcorner_{\hat A}$ and $\hat \gamma$.
Indeed for $S \subset \mathcal{S}$
\begin{align*}
\mu\llcorner_{\hat A}(\cup_{y\in S} R(y)) = &~ \mu\llcorner_{\hat A} (a(S)) \crcr
= &~ \int_{a(S)} h(a) (a_{\sharp}m_{\gamma})(da) \crcr
= &~ \int_{S} h(a(y)) m_{\gamma}(dy) = \hat \gamma (\cup_{y\in S} R(y)).
\end{align*}

Hence we can project the measures, obtain the finite dimensional 
estimate of Theorem \ref{T:stimapprox}, obtain the infinite dimensional estimate through Theorem \ref{T:approx} and finally by Proposition 
\ref{P:puntini} get that $\mu(\hat A)=0$, that is a contradiction with $\mu(\hat A)>0$. In the same way, following Remark \ref{R:resume}, 
we obtain that $\nu(b(\mathcal{T}))=0$.
\end{proof}

It follows that the disintegration formula \eqref{E:primadis}
holds true on the whole transportation set:
\[
\mu = \int \mu_{y} m(dy), \qquad \nu = \int \nu_{y} m(dy).
\]

\begin{proposition}\label{P:noatom}
For $m$-a.e. $y \in \mathcal{S}$ the conditional probabilities $\mu_{y}$ and $\nu_{y}$ have no atoms.
\end{proposition}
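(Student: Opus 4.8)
The plan is to run the atom-chasing argument of Proposition \ref{P:nonatoms} in the present unbounded-density setting, using the localization device of Proposition \ref{P:noiniz} to reduce to the bounded case in which the evolution estimate of Theorems \ref{T:stimapprox} and \ref{T:approx} is available. I describe the argument for $\mu_{y}$; the statement for $\nu_{y}$ follows by the time-reversed version, replacing the forward evolution $T_{t}$ by $T_{1-t}$ and the forward estimate by \eqref{E:ricapitolo}, exactly as indicated in Remark \ref{R:resume}.

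First I would argue by contradiction, supposing that the set of $y \in \mathcal{S}$ for which $\mu_{y}$ has an atom has positive $m$-measure. Restricting to a compact $K \subset \mathcal{S}$ of almost full measure on which $y \mapsto \mu_{y}$ is weakly continuous, the multifunction $y \mapsto \{ x \in R(y) : \mu_{y}(\{x\}) > 0 \}$ is $\sigma$-closed, so its graph $A$ is Borel and, by Lusin's theorem, a countable union of Borel graphs. Selecting one such graph $\hat A$ with $m(P_{1}(\hat A)) > 0$, the disintegration \eqref{E:primadis} gives $\mu(P_{2}(\hat A)) > 0$. Intersecting $P_{2}(\hat A)$ with $\{ \delta \leq \rho_{1} \leq M \}$ for suitable $0 < \delta < M$ and passing to a compact subset, I obtain a compact set $A'$ that still meets each ray in at most one point, carries positive $\mu$-mass, and on which $\rho_{1}$ is bounded above and below.

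Next, following Steps 1 and 2 of Proposition \ref{P:noiniz}, I would attach to the restricted source $\mu \llcorner_{A'}$ a target measure of the form $\hat\gamma = \int_{R(A') \cap \mathcal{S}} h(y)\, \gamma_{y}\, m_{\gamma}(dy)$ with $h$ chosen so that the ray-masses of $\mu \llcorner_{A'}$ and $\hat\gamma$ coincide; as in the final computation of Proposition \ref{P:noiniz}, this makes $\mathcal{T}_{e}$ a transport set for the pair $(\mu\llcorner_{A'}, \hat\gamma)$, so that the associated $\norm_{H(\gamma)}$-cyclically monotone set $\Gamma$ and its rays agree with the original ones, and both marginals have densities bounded by a constant independent of the dimension. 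Projecting onto the finite-dimensional approximations $P_{d}$ and applying Theorem \ref{T:stimapprox}---whose constant depends only on the density bound, not on $d$---together with Theorem \ref{T:approx}, I obtain a constant $C > 0$ such that, for $t$ small (equivalently along a sequence $t_{n} \searrow 0$), $\gamma\big(T_{t}(\Gamma \cap A' \times X)\big) \geq C\,\mu(A')$.

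Finally I would close as in Step 2 of Proposition \ref{P:nonatoms}. Since $A'$ selects at most one point on each ray and $T_{t}$ pushes that point strictly forward along its own ray, the set $T_{t}(\Gamma \cap A' \times X)$ is disjoint from $A'$; moreover, once $\Gamma \cap A' \times X \subset \{ \|x-y\|_{H(\gamma)} \leq L \}$ and $t \leq \ve / L$, it is contained in the $\ve$-neighborhood $(A')^{\ve}$. Using $\mu(A') \geq \delta\, \gamma(A')$, this yields $\gamma((A')^{\ve}) \geq \gamma(A') + \gamma(T_{t}(\Gamma \cap A' \times X)) \geq (1 + C\delta)\,\gamma(A')$, contradicting $\gamma((A')^{\ve}) \to \gamma(A')$ as $\ve \to 0$; hence $\mu(A') = 0$ and $\mu_{y}$ is atomless for $m$-a.e. $y$. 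I expect the main obstacle to be the middle step: constructing the auxiliary target $\hat\gamma$ with a genuinely dimension-independent density bound while keeping $\mathcal{T}_{e}$ as the common transport set, since only then do the finite-dimensional estimate of Theorem \ref{T:stimapprox} and its passage to the limit through Theorem \ref{T:approx} apply along the original rays that carry the suspected atoms.
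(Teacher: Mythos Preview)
Your argument is correct, and the overall architecture---reduce to a bounded-density auxiliary problem sharing the original rays, invoke the finite-dimensional estimate uniformly, and finish with the disjointness/blow-up contradiction---matches the paper's. The route differs in one substantive choice: the paper does \emph{not} pair $\mu\llcorner_{A'}$ with a $\gamma$-based target as in Proposition~\ref{P:noiniz}. Instead it truncates \emph{both} marginals to the sublevel sets $K_{i,M}=\{\rho_i\le M\}$, rescales the truncated $\mu$ ray-by-ray by the factor $h(y)=\nu_{y,M}(R(y))/\mu_{y,M}(R(y))$ on a set $D(N)$ where this ratio is bounded, and takes $\hat\nu=\nu\llcorner_{R(D(N))\cap K_{2,M}}$ as the second marginal. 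This yields a pair $(\hat\mu,\hat\nu)$ with bounded densities and matching ray-masses, so the paper can simply \emph{quote} Proposition~\ref{P:nonatoms} for that pair and read off the contradiction from $\hat\mu_y=c(y)\,\mu_y\llcorner_{K_{1,M}}$. Your version instead imports the $\gamma$-disintegration device from Proposition~\ref{P:noiniz} and re-runs the geometric argument of Proposition~\ref{P:nonatoms} by hand; this buys you the ability to ignore $\rho_2$ altogether, at the price of having to re-establish the disjointness $T_t(\Gamma\cap A'\times X)\cap A'=\emptyset$ and the $\ve$-neighborhood inclusion explicitly. Both constructions produce an auxiliary problem whose optimal $\Gamma$ lives on the original rays (ray-masses agree, hence the ray-wise plan is optimal, and all optimal plans share the same cyclically monotone carrier), so the passage through Theorems~\ref{T:stimapprox} and~\ref{T:approx} is legitimate in either case.
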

\begin{proof} We only prove the claim for $\mu_{y}$.

{\it Step 1.} Suppose by contradiction that there exist a measurable set $\hat{\mathcal{S}} \subset \mathcal{S}$ 
such that $m(\hat{ \mathcal{S}})>0$ 
and for every $y \in \hat{ \mathcal{S}}$ there exists $x(y)$ such that $\mu_{y}(\{x(y)\})>0$. 
Restrict and normalize both $\mu$ and $\nu$ to $R(\hat{\mathcal{S}})$, and denote them again with $\mu$ and $\nu$.

Consider the sets $K_{i,M} : = \{ x \in X :  \rho_{i}\leq M \}$ for $i = 1,2$. 
Note that 
$\mu(K_{1,M}) \geq 1 -c_{1}(M)$ and $\nu(K_{2,\delta}) \geq 1 -c_{2}(M)$ with $c_{i}(M) \to 0$ as $M \nearrow +\infty$.
Hence for $M$ sufficiently large the conditional probabilities of the disintegration of $\mu\llcorner_{K_{1,M}}$ 
have atoms, therefore we can assume, possibly restricting $\hat{\mathcal{ S}}$, that for all $y \in \hat{\mathcal{S}}$ it holds $x(y) \in K_{1,M}$.

{\it Step 2.}
Define
\[
\mu_{y,M} : = \mu_{y}\llcorner_{K_{1,M}}, \qquad  \nu_{y,M} : = \nu_{y}\llcorner_{K_{2,M}},
\]
and introduce the set
\[
D(N) : = \Big\{ y \in \hat{\mathcal{S}} : \frac{\mu_{y,M}(R(y))} {\nu_{y,M}(R(y))} \leq N \Big\}.
\]
Then for sufficiently large $N$, $m(D(N)) > 0$. The map
$ D(N)\ni y  \mapsto h(y) : =   \nu_{y,M}(R(y))/ \mu_{y,M}(R(y)) \leq N$ permits to define 
\[
\hat \mu: = \int_{D(N)} h(y) \mu_{y,M} m (dy), \qquad \hat \nu : = \nu\llcorner_{R(D(N))\cap K_{2,M} }.
\]
It follows that $\hat \mu$ and $\hat \nu$ have bounded densities w.r.t. $\gamma$ and the set 
$\hat{\mathcal{T}}: = \mathcal{T} \cap G(K_{1,\delta}) \cap G^{-1}(K_{2,\delta})$ is 
a transport set for the transport problem between $\hat \mu$ and $\hat \nu$. 

It follows from Theorem \ref{T:approx} and Theorem \ref{T:stimapprox} that $\hat \gamma:=\gamma\llcorner_{\hat {\mathcal{T}}}$ 
verifies Assumption \ref{A:NDEatom} w.r.t. $G \cap K_{1,M}\times X \cap X \times K_{2,M}$. 
Therefore from Proposition \ref{P:nonatoms} follows that the conditional probabilities
$\hat \mu_{y}$ of the disintegration of $\hat \mu$ are continuous. Since $\hat \mu_{y} =c(y) \mu_{y}\llcorner_{\hat{\mathcal{T}}}$
for some positive constant $c(y)$, we have a contradiction.
\end{proof}


It follows straightforwardly the existence of an optimal invertible transport map. 

\begin{theorem}\label{T:esiste}
Let $\mu,\nu \in \mathcal{P}(X)$ absolute continuous w.r.t. $\gamma$ and assume that there exists $\pi \in \Pi(\mu,\nu)$ such that $\mathcal{I}(\pi)$ is finite. 
Then there exists a solution for the Monge minimization problem
\[
\min_{T : T_{\sharp}\mu=\nu} \int_{X} \| x-T(x) \|_{H(\gamma)} \mu(dx).
\]
Moreover we can find $T$ invertible.
\end{theorem}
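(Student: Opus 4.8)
The plan is to assemble the structural results already established, since by this point the genuinely hard estimates have been carried out. First I would exploit the standing hypothesis that some $\pi \in \Pi(\mu,\nu)$ has finite cost $\mathcal{I}(\pi)$. Because the cost $\|x-y\|_{H(\gamma)}$ is lower semicontinuous (property (1) of Section \ref{Ss:Wiener}), the direct method applied to the weakly compact set $\Pi(\mu,\nu)$ produces an optimal transference plan; moreover $-\|x-y\|_{H(\gamma)}$ is Souslin, so by the result recalled at the close of Section \ref{Ss:General Facts} (see \cite{biacar:cmono}) this optimal $\pi$ is concentrated on a $\|\cdot\|_{H(\gamma)}$-cyclically monotone set $\Gamma$. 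Feeding $\Gamma$ into the construction of Section \ref{S:Optimal} then yields the transport sets $\mathcal{T}_e$, $\mathcal{T}$, the ray equivalence relation $R\llcorner_{\mathcal{T}\times\mathcal{T}}$, the endpoint sets $a(\mathcal{T})$, $b(\mathcal{T})$, and the unit-speed ray map $g$ of Definition \ref{D:mongemap}.

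Next I would invoke Proposition \ref{P:noiniz} to get $\mu(a(\mathcal{T}))=\nu(b(\mathcal{T}))=0$. Since $\pi$ is concentrated on $\mathcal{T}_e \times \mathcal{T}_e \cup \{x=y\}$ and the initial (resp. final) points carry no $\mu$- (resp. $\nu$-) mass, both $\mu$ and $\nu$ are concentrated on $\mathcal{T}$. The Disintegration Theorem \ref{T:disintr} then gives the strongly consistent disintegrations
\[
\mu = \int_{\mathcal{S}} \mu_y\, m(dy), \qquad \nu = \int_{\mathcal{S}} \nu_y\, m(dy),
\]
with $\mu_y$, $\nu_y$ concentrated on the single geodesic $R(y)$, and by Proposition \ref{P:noatom} the conditionals $\mu_y$ and $\nu_y$ are continuous (atomless) for $m$-a.e. $y \in \mathcal{S}$.

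With these two facts in hand — $\mu$ concentrated on $\mathcal{T}$ and $\mu_y$ atomless — the one-dimensional reduction delivers the map. For $m$-a.e. $y$ I would transport $\mu_y$ to $\nu_y$ along $R(y)$ by the unique monotone rearrangement $T_y$ (equivalently, the monotone minimizer of the quadratic cost pulled back through the parametrization $t \mapsto g(y,t)$), and set $T(g(y,t)) := T_y(g(y,t))$. That the glued map is $\mu$-measurable and is an optimal transference map for the full problem \eqref{E:trasp} is precisely the content of Theorem 6.2 of \cite{biacava:streconv}, which applies exactly because each $\mu_y$ is continuous. Invertibility follows from the symmetry of the construction: since $\nu$ is likewise concentrated on $\mathcal{T}$ with atomless conditionals $\nu_y$, the monotone rearrangement between two continuous one-dimensional measures is a bijection up to null sets, so the fibrewise inverses $T_y^{-1}$ (the monotone rearrangements pushing $\nu_y$ to $\mu_y$) glue to a $\nu$-measurable inverse of $T$.

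I expect no serious obstacle to remain at this stage: the delicate analytic work — the finite-dimensional Jacobian estimate of Theorem \ref{T:stimapprox}, its dimension-independent passage to the limit in Theorem \ref{T:approx}, and the consequent negligibility and atomlessness statements — has already been absorbed into Propositions \ref{P:noiniz} and \ref{P:noatom}. The only points requiring care are bookkeeping: checking that the selection of the monotone maps $T_y$ is jointly measurable in $(y,t)$ (secured by the analyticity of $g$ in Proposition \ref{P:gammaclass} together with the selection theorems of Section \ref{Ss:sele}), and verifying that the fibrewise inverses assemble into a genuine global inverse. Both are routine once the atomlessness of \emph{both} families of conditionals is in place.
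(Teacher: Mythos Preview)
Your proposal is correct and follows essentially the same route as the paper's own proof: invoke Propositions \ref{P:noiniz} and \ref{P:noatom} to obtain atomless conditionals $\mu_y,\nu_y$ on each ray $R(y)$, push them to $\mathbb{R}$ via the ray map $g$, apply the one-dimensional monotone rearrangement, and defer the measurability and optimality of the glued map to Theorem 6.2 of \cite{biacava:streconv}. If anything you spell out more of the preliminary logic (existence of an optimal plan, concentration on a cyclically monotone $\Gamma$) than the paper does, but the argument is the same.
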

\begin{proof}
For $m$-a.e $y \in \mathcal{S}$ $\mu_{y}$ and $\nu_{y}$ are continuous. Since $R(y)$ is one dimensional and 
the ray map $\ni \erre t \mapsto g(t,y)$ is an isometry w.r.t. $\norm_{H(\gamma)}$, we can define 
the non atomic measures $g(y,\cdot)_{\sharp}\mu_{y},g(y,\cdot)_{\sharp}\nu_{y} \in \mathcal{P}(\erre)$. By the one-dimensional theory, there exists 
a monotone map $T_{y}:\erre \to \erre$ such that 
\[
T_{y\,\sharp} \Big(g(y,\cdot)_{\sharp}\mu_{y} \Big) = g(y,\cdot)_{\sharp}\nu_{y}.
\]
Using the inverse of the ray map, we can define $T_{y}$ on $R(y)$. Hence for $m$-a.e. $y \in \mathcal{S}$ we have a $\norm_{H(\gamma)}$-cyclically monotone map $T_{y}$ such that $T_{y\,\sharp}\mu_{y}=\nu_{y}$.
To conclude define $T : \mathcal{T} \to \mathcal{T}$ such that $T = T_{y}$ on $R(y)$. Indeed $T$ is $\mu$-measurable, invertible and $T_{\sharp}\mu=\nu$.
For the details, see the proof of Theorem 6.2 of \cite{biacava:streconv}.
\end{proof}

\section{Notation}
\label{S:notation}

\begin{tabbing}
\hspace{4cm}\=\kill
$P_{i_1\dots i_I}$ \> projection of $x \in \Pi_{k=1,\dots,K} X_k$ into its $(i_1,\dots,i_I)$ coordinates, keeping order
\\
$\mathcal{P}(X)$ or $\mathcal{P}(X,\Omega)$ \> probability measures on a measurable space $(X,\Omega)$
\\
$\mathcal{M}(X)$ or $\mathcal{M}(X,\Omega)$ \> signed measures on a measurable space $(X,\Omega)$
\\
$f \llcorner_A$ \> the restriction of the function $f$ to $A$
\\
$\mu \llcorner_A$ \> the restriction of the measure $\mu$ to the $\sigma$-algebra $A \cap \Sigma$
\\
$\mathcal{L}^d$ \> Lebesgue measure on $\R^d$
\\
$\mathcal{H}^k$ \> $k$-dimensional Hausdorff measure
\\
$\Pi(\mu_1,\dots,\mu_I)$ \> $\pi \in \mathcal{P}(\Pi_{i=1}^I X_i, \otimes_{i=1}^I \Sigma_i)$ with marginals $(P_i)_\sharp \pi = \mu_i \in \mathcal{P}(X_i)$
\\
$\mathcal{I}(\pi)$ \> cost functional \eqref{E:Ifunct}
\\
$c$ \> cost function $ : X \times Y \mapsto [0,+\infty]$
\\
$\mathcal{I}$ \> transportation cost \eqref{E:Ifunct}
\\
$\phi^c$ \> $c$-transform of a function $\phi$ \eqref{E:ctransf}
\\
$\partial^c \f$ \> $d$-subdifferential of $\f$ \eqref{E:csudiff}
\\
$\Phi_c$ \> subset of $L^1(\mu) \times L^1(\nu)$ defined in \eqref{E:Phicset}
\\
$J(\phi,\psi)$ \> functional defined in \eqref{E:Jfunct}
\\
$C_b$ or $C_b(X,\R)$ \> continuous bounded functions on a topological space $X$
\\
$(X,d)$ \> Polish space
\\
$(X,d_L)$ \> non-branching geodesic separable metric space
\\
$D_L(x)$ \> the set $\{y : d_L(x,y) < +\infty\}$
\\
$\gamma_{[x,y]}(t)$ \> geodesics $\gamma : [0,1] \to X$ such that $\gamma(0) = x$, $\gamma(1) = y$
\\
$B_r(x)$ \> open ball of center $x$ and radius $r$ in $(X,d)$
\\
$B_{r,L}(x)$ \> open ball of center $x$ and radius $r$ in $(X,d_L)$
\\
$\mathcal{K}(X)$ \> space of compact subsets of $X$
\\
$d_H(A,B)$ \> Hausdorff distance of $A$, $B$ w.r.t. the distance $d$
\\
$L(X^{*},X)$ \> space of continuous and linear maps from $X^{*}$ to $X$
\\
$A_x$, $A^y$ \> $x$, $y$ section of $A \subset X \times Y$ \eqref{E:sectionxx}
\\
$\mathcal{B}$, $\mathcal{B}(X)$ \> Borel $\sigma$-algebra of $X$ Polish
\\
$\Sigma^1_1$, $\Sigma^1_1(X)$ \> the pointclass of analytic subsets of Polish space $X$, i.e.~projection of Borel sets
\\
$\Pi^1_1$ \> the pointclass of coanalytic sets, i.e.~complementary of $\Sigma^1_1$
\\
$\Sigma^1_n$, $\Pi^1_n$ \> the pointclass of projections of $\Pi^1_{n-1}$-sets, its complementary
\\
$\Delta^1_n$ \> the ambiguous class $\Sigma^1_n \cap \Pi^1_n$
\\
$\mathcal{A}$ \> $\sigma$-algebra generated by $\Sigma^{1}_{1}$
\\
$\mathcal{A}$-function \> $f : X \to \R$ such that $f^{-1}((t,+\infty])$ belongs to $\mathcal A$
\\
$h_\sharp \mu$ \> push forward of the measure $\mu$ through $h$, $h_\sharp \mu(A) = \mu(h^{-1}(A))$
\\
$\textrm{graph}(F)$ \> graph of a multifunction $F$ \eqref{E:graphF}
\\
$F^{-1}$ \> inverse image of multifunction $F$ \eqref{E:inverseF}
\\
$F_x$, $F^y$ \> sections of the multifunction $F$ \eqref{E:sectionxx}
\\
$\mathrm{Lip}_1(X)$ \> Lipschitz functions with Lipschitz constant $1$
\\
$\Gamma'$ \> transport set \eqref{E:gGamma}
\\
$G$, $G^{-1}$ \> outgoing, incoming transport ray, Definition \ref{D:Gray}
\\
$R$ \> set of transport rays \eqref{E:Rray}
\\
$\mathcal{T}$, $\mathcal{T}_e$ \> transport sets \eqref{E:TR0}
\\
$a,b : \mathcal{T}_e \to \mathcal{T}_e$ \> endpoint maps \eqref{E:endpoint0}
\\
$\mathcal S$ \> cross-section of $R \llcorner_{\mathcal T \times \mathcal T}$
\\
$g = g^+ \cup g^-$ \> ray map, Definition \ref{D:mongemap}
\\
$Jac(T)(x)$ \> Jacobian determinant $|\det(dT(x))|$, Theorem \ref{T:stimapprox} 
\end{tabbing}

\bibliography{biblio}

\end{document}